\documentclass{amsart}
\usepackage{amsmath}
\usepackage{amsfonts}
\usepackage{amssymb}
\usepackage{epsfig}
\newtheorem{theorem}{Theorem}[section]

\newtheorem{lm}[theorem]{Lemma}

\newtheorem{cor}[theorem]{Corollary}

\newtheorem{rem}[theorem]{Remark}
\newtheorem{pr}[theorem]{Proposition}
\usepackage{color}


\begin{document}
\title{Some homological properties of $GL(m|n)$ in arbitrary characteristic}
\author{Alexandr N. Zubkov}
\address{Omsk State Pedagogical University, Chair of Mathematical Analysis, Algebra and Geometry, 644099 Omsk-99, Tuhachevskogo Embankment 14, Russia}
\email{a.zubkov@yahoo.com}

\maketitle

\section*{Introduction}

Let $G$ be a reductive group defined over a field $K$ of zero characteristic and let $B$ be a Borel subgroup of $G$. 
The Borel-Bott-Weil theorem describes the cohomology $H^{\bullet}(G/B, K_{\lambda})$. This classical result  
was generalized by Penkov for almost all series of simple Lie supergroups with respect to an additional condition on weights $\lambda$ (cf. \cite{pen}). The Penkov's approach to the proof of super BBW theorem is based on the Demazure's idea to use minimal parabolic subgroups \cite{dem}.  
The aim of the present article is to demonstrate how Demazure-Penkov's approach can be extended for general linear supergroups over a field of positive characteristic. We prove a superanalog of Mackey imprimitivity theorem (cf. \cite{cps}) and derive some 
standard facts about cohomologies $H^{\bullet}(G/H, ?)$ to realize the proof of super BBW theorem  in the way that mimics  \cite{jan}, II.5. Besides, we prove a partial generalization of Kempf's vanishing theorem that can be formulated as follows. Let $G=GL(m|n)$ and $B$ is a Borel supersubgroup of $G$. If a weight $\lambda$ satisfies $(\lambda, \beta^{\vee}_i)\geq k_i$, where $\beta_i$ runs over simple positive roots of $B_{ev}$ and $k_i$ is a certain non-negative integer depending of $\beta_i$, then $H^k(G/B, K_{\lambda})=0$ for all $k\geq 1$. 
This theorem can be directly deduced from \cite{br}, Theorem 2.7, once we show that $G/B$ is a locally decomposable superscheme (the condition $Q5$ in \cite{br}). 

To avoid the proof of this non-trivial statement we develop a different approach. First of all, one can prove the above theorem over a field of positive characteristic using some nice properties of the Frobenius kernels. More precisely, it can be easily shown that a sheaf quotient $GB_r/B$ is an affine decomposable superscheme. By Theorem 2.7, \cite{br}, $ind^{BG_r}_B K^{\epsilon}_{\lambda}|_{B_{ev}G_{ev,r}}$ has a filtration with quotients
that are isomorphic to $ind^{B_{ev}G_{ev, r}}_{B_{ev}} K_{\lambda-\pi}$, where $\pi$ runs over sums of roots $\alpha\in\Phi_1^+$ without repetitions. The isomorphism 
$$H^k(G/B, ?)|_{G_{ev}}\simeq H^k(G_{ev}/B_{ev}G_{ev,r}, (ind^{BG_r}_B ?)|_{B_{ev}G_{ev, r}})$$
and the standard long exact sequence arguments infer our statement.
Next, we observe that if 
$H^k(G/B, K_{\lambda})\neq 0$, where $K$ is a field of zero characteristic, then $H^k(G/B, F_{\lambda})\neq 0$ for any field
$F$. The advantage of our approach is that we do not need the property of local decomposability.

The paper is organized as follows. In the first ten sections we give all necessary definitions, notations and derive auxiliary results. The most important results in these sections are Theorem \ref{simpleimprim} and Corollary \ref{reductiontoeven}.   
In the eleventh section we describe the representations of minimal parabolic supersubgroups that plays crucial role in the proof of super BBW theorem in the next section. In the thirteenth section we obtain the characteristic free character formula of Euler characteristic $\chi(B, \lambda^{\epsilon})$ (cf. \cite{grusserg, grusserg2}). The main result of the next section, a partial generalization of Kempf's vanishing theorem, has been discussed above. The last section is devoted to the complete description of the cohomology $H^{\bullet}(G/B, K^{\epsilon}_{\lambda})$, where $G=GL(2|1)$ and $B$ is a non-standard Borel supersubgroup of $G$.

\section{Supermodules and supercomodules}

A {\it vector superspace} is a vector space graded by the group $\mathbb{Z}_2 = \{0, 1\}$. The homogeneous components of $V$ are denoted by $V_0$, $V_1$. The degree of a homogeneous 
element, say $v$, is denoted by $|v|$. If $V$ and $W$ are superspaces, then $\mathsf{Hom}_K(V, W)$ has the natural superspace structure defined by 
$$\mathsf{Hom}_K(V, W)_i=\{\phi| \phi(V_j)\subseteq W_{i+j}, i, j\in\mathbb{Z}_2\} .$$
We let $\mathsf{SMod}_K$ denote the $K$-linear abelian category of vector superspaces with even morphisms.
This forms a tensor category with the canonical symmetry
$$ t=t_{V, W} :V \otimes W \overset{\simeq}{\longrightarrow} W \otimes V, \quad v \otimes w \mapsto (-1)^{|v||w|} w \otimes v,$$
where $V, W \in \mathsf{SMod}_K$. 

Objects defined in this symmetric tensor category are called with the adjective `super' attached. For example,
a {\it (Hopf) superalgebra} is a (Hopf) algebra object in $\mathsf{SMod}_K$. All superalgebras are assumed to be unital.  

A superalgebra $A$ is called
supercommutative, if $ab=(-1)^{|a||b|}ba$ for all homogeneous elements $a, b\in A$. A typical example of a supercommutative superalgebra is a symmetric superalgebra $S(V)$ of superspace $V$. More precisely, $S(V)=T(V)/I$, where $T(V)=
\oplus_{k\geq 0} V^{\otimes k}$ is a tensor superalgebra and the ideal $I$ is generated by the elements
$v\otimes w-(-1)^{|v||w|}w\otimes v, v, w\in V$.

Let $\mathsf{SAlg}_K$ denote the category of supercommutative superalgebras.
Given $A \in \mathsf{SAlg}_K$, we let ${}_A\mathsf{SMod}$,\ $\mathsf{SMod}_A$ denote the category of left and respectively, 
right $A$-supermodules.
These two categories are identified if we regard each $M \in \mathsf{SMod}_A$ as an object in ${}_A\mathsf{SMod}$ 
by defining the left $A$-action  
$$
am := (-1)^{|a||m|}ma, \quad a \in A,\ m \in M.
$$
on the supervector space $M$. We remark that $M$ thus turns into an $(A, A)$-superbimodule.

Let $C$ be a supercoalgebra with comultiplication $\Delta_C$ and counit $\epsilon_C$.  We let $\mathsf{SMod}^C$, ${}^C\mathsf{SMod}$ 
denote the categories of right and respectively, left $C$-supercomodules (with even morphisms). For a $C$-supercomodule $V$
let $\tau_V$ denote its comodule map. We use Sweedler's notation $\tau_V(v)=\sum v_1\otimes c_2, v, v_1\in V, c_2\in C$ (or symmetrically, $\tau_V(v)=\sum c_2\otimes v_1$).

If $V\in \mathsf{SMod}^C$ and $W\in {}^C\mathsf{SMod}$, then define a \emph{cotensor product} 
$$V\Box_C W=\{x\in V\otimes W | (\tau_V\otimes id_W-id_V\otimes\tau_W)(x)=0\}.$$

Let $(V, \tau_1)\in\mathsf{SMod}^{C_1}$ and $(V, \tau_1)\in\mathsf{SMod}^{C_2}$.
We say that $\tau_1$ commutes with $\tau_2$ whenever $(id_V\otimes t)(\tau_2\otimes id_{C_1})\tau_1 = (\tau_1\otimes id_{C_2})\tau_2$. 

A Hopf superalgebra $A$ has two right $A$-supercomodule structures, say $A_r$ and $A_l$, given by $\rho_r=\Delta_A$ and $\rho_l=t(s_A\otimes id_A)\Delta_A$ respectively. Here $s_A$ is the antipode of $A$. Besides, $s_A$ takes $A_r$ isomorphically to $A_l$. It is also clear that $\rho_r$ commutes with $\rho_l$. 

If $V\in \mathsf{SMod}^A$ (the case $V\in {}^A\mathsf{SMod}$ is symmetric), then $V^*$ has a right $A$-supermodule structure such that
$\tau_{V^*}(\phi)=\sum\phi_1\otimes a_2$ if and only if $$\sum(-1)^{|a_2||v_1|}\phi_1(v_1)a_2 a'_2=\phi(v)$$
for any $v\in V$, where $\tau_V(v)=\sum v_1\otimes a'_2$.
The functor $V\to V^*$ is an anti-equivalence on the full subcategory consisting of all finite dimensional $A$-supercomodules.
\begin{rem}\label{moregeneral}
Replacing the field $K$ by a superalgebra $A\in\mathsf{SAlg}_K$, one can define all the above objects in the tensor symmetric category of $A$-supermodules. In what follows we reserve the notation $\mathsf{SAlg}_A$ for the category of supercommutative
$A$-superalgebras.      
\end{rem}
If a Hopf superalgebra $A$ has a form $B\otimes_{\mathbb{Z}} K$, where $B$ is a Hopf superring (i.e. a Hopf algebra object in $\mathsf{SMod}_{\mathbb{Z}}$), then we say that $B$ is a $\mathbb{Z}$-{\it form} of $A$. An right $A$-supercomodule $V$ has a $\mathbb{Z}$-form $W$, provided $W$ is an right $B$-supercomodule and $V=W\otimes_{\mathbb{Z}} K$. In other words,
$\tau_V(w\otimes a)=\sum (w_1\otimes 1)\otimes (c_2\otimes a)$, where $\tau_W(w)=\sum w_1\otimes c_2, w, w_1\in W, c_2\in B, a\in K$. The case of left supercomodules is symmetric.     
 
\section{$K$-functors}

Any functor from $\mathsf{SAlg}_K$ to the category of sets is called $K$-functor. For example, if
$V$ is a superspace, then one can define a $K$-functor $V_a(A)=V\otimes A, A\in\mathsf{SAlg}_K$.

A $K$-functor $X$ is said to be an {\it affine superscheme}, if $X$ is represented by a superalgebra
$A\in\mathsf{SAlg}_K$. In other words, $X(B)=\mathsf{Hom}_{\mathsf{SAlg}_K}(A, B), B\in\mathsf{SAlg}_K$. 
In notations from \cite{maszub}, $X=SSp \ A$. The category of affine superschemes is anti-equivalent to $\mathsf{SAlg}_K$. For any morphism of affine superschemes $g : SSp \ A\to SSp \ B$ let $g^*$ denote its dual comorphism $g^* : B\to A$.

For example, the functor 
$X(B)=B_0^{m}\oplus B_1^n$ is an affine superscheme represented by the {\it free superalgebra}
$K[x_{i}|1\leq i\leq m+n]$ with $m$ even (free) generators $x_1, \ldots , x_m$ and $n$ odd (free) generators
$x_{m+1}, \ldots, x_{m+n}$. This affine superscheme is called an {\it affine superspace} of (super)dimension 
$m|n$ and it is denoted by $\mathsf{A}^{m|n}$. 

For any $K$-functor $X$ we denote $\mathsf{Mor}(X, \mathsf{A}^{1|1})$ by $K[X]$.
It has a natural superalgebra structure and we call $K[X]$ a {\it coordinate superalgebra} of $X$.
If $X=SSp \ A$, then $K[X]\simeq A$ (cf. \cite{jan}, I.1.3, \cite{maszub}, Lemma 1.1). 
   
A closed subfunctor $Y$ of 
$SSp \ A$ is uniquely defined by a superideal $I_Y =I$ of $A$ such that $Y(B)=\{x\in SSp \ A(B)| x(I)=0\}$. 
Thus $Y=V(I)\simeq SSp \ A/I$ is again affine superscheme (cf. \cite{jan}, I.1.4, \cite{maszub}, \S 3).
We denote the canonical epimorphism $A\to A/I$ by $\pi_Y$.

An open subfunctor $Y$ of $SSp \ A$ is also defined by a superideal $I$ as $Y(B)=\{x\in SSp \ A | Bx(I)=B\}$
(cf. \cite{jan}, I.1.5, \cite{maszub}, \S 3).
In general, $Y=D(I)$ is not isomorphic to any affine superscheme, but if $I=Af, f\in A_0$, then $D(I)\simeq
SSp \ A_f$. Finally, if $X$ is a $K$-functor, then a subfunctor $Y\subseteq X$ is called closed (open), whenever
for any superalgebra $A\in\mathsf{SAlg}_K$ and any its superideal $I$, and for any morphism $\alpha : SSp \ A \to X$,
the subfunctor $\alpha^{-1}(V(I))$ ($\alpha^{-1}(D(I)$) is closed (respectively, open).     

For a $K$-functor $X$ define a subfunctor $X_{ev}(A)=X(\iota^A_{A_0})X(A_0), A\in \mathsf{SAlg}_K$, where $\iota^A_{A_0}$ is the natural algebra embedding $A_0\to A$. For example, $(SSp \ R)_{ev}$ is a closed supesubrscheme of $SSp \ R$, defined by the ideal $RR_1$.  

A local $K$-functor $X$, that has an \emph{open covering} by affine subsuperschemes $X_i\simeq SSp \ R_i , i\in I,$ is called just \emph{superscheme}. 
A superscheme $X$ is called \emph{Noetherian} if and only if the above open covering is finite and each $R_i$ is a Noetherian algebra. If $X$ is a superscheme, then $X_{ev}$ is a closed subfunctor in $X$. In particular, $X_{ev}$ is also superscheme (see \cite{jan, maszub, zub1} for more definitions).
\begin{rem}\label{moregeneral2}
All the above objects can be defined over any supercommutative superalgebra $A$. 
For example, an $A$-functor is a functor from the category $\mathsf{SAlg}_A$ to the category of sets, (affine) $A$-superscheme 
$SSp_A \ B$ is defined by $SSp_A \ B (C)=\mathsf{Hom}_{\mathsf{SAlg}_A}(B, C)$, where $B, C\in\mathsf{SAlg}_A$.
\end{rem}

\section{Supergroups}

An affine superscheme $G=SSp \ A$ is a group $K$-functor if and only if $A$ is a Hopf superalgebra. If it is the case, then $G$ is called an {\it affine supergroup}. Besides, if $A=K[G]$ is finitely generated, then $G$ is called an {\it algebraic supergroup}. 

A closed supersubscheme $H$ of $G$ is a subgroup functor if and only if $I_H$ is a Hopf superideal of $K[G]$ (cf. \cite{jan, zub1, zub2}). In what follows all supersubgroups are supposed to be closed unless otherwise stated. If $H$ is a supersubgroup of $G$, we denote $H\leq G$. For example, $G_{ev}\leq G$ and $G_{ev}$ is called the {\it largest even} supersubgroup of $G$. The affine group $G_{res}=G|_{\mathsf{Alg}_K}$ is isomorphic to $Sp \ K[G]/K[G]K[G]_1$.

If $char K=p>0$ and $K$ is perfect, then we have an $r$-th {\it Frobenius morphism} $F^r : K[G]^{(r)}\to K[G]$ of Hopf superalgebras. Remind that $K[G]^{(r)}$ coincides with  $K[G]$ as an Hopf superring but each $a\in K$ acts as $a^{p^{-r}}$ on $K[G]^{(r)}$ (see \cite{jan, zub1}). Besides, $F^r(f)=f^{p^r}, f\in K[G]^{(r)}$. Denote the dual morphism 
$G\to SSp \ K[G]^{(r)}$ by $f_r$. The normal supersubgroup $G_r=\ker f_r$ is called the $r$-th {\it infinitesimal} supersubgroup. 

\section{Superalgebras of distributions}

Let $X$ be an affine superscheme and $\mathfrak{m}$ be a maximal superideal of $K[X]$. Let $\mathsf{Dist}(X, \mathfrak{m})$ denote
the superspace of distributions with support at $\mathfrak{m}$ (see \cite{jan, zub1} for more details). If $\mathfrak{m}$ is nilpotent, then $\mathsf{Dist}(X, \mathfrak{m})=K[X]^*$.  
For any morphism of affine superschemes
$g : X\to Y$ we denote the induced morphism of superspaces $\mathsf{Dist}(X, \mathfrak{m})\to\mathsf{Dist}(X, (g^*)^{-1}(\mathfrak{m}))$ by $dg_{\mathfrak{m}}$. We call $dg_{\mathfrak{m}}$ a {\it differential} of $g$ at $\mathfrak{m}$. 

Let $G$ be an algebraic supergroup. Then $\mathsf{Dist}(G, \ker\epsilon_G)$ is denoted by $\mathsf{Dist}(G)$. The superspace $\mathsf{Dist}(G)$ is a cocommutative Hopf superalgebra.
For any morphism $f : G\to Y$ the differential $df_{\ker\epsilon_G}$ is denoted by $df$. 

Assume that $H_1$ and $H_2$ are supersubgroups of $G$. We have an morphism of superschemes
$m : H_1\times H_2\to G$ induced by the multiplication of $G$. Then $\mathsf{Dist}(H_1)$ and $\mathsf{Dist}(H_2)$
are Hopf supersubalgebras of $\mathsf{Dist}(G)$ and the morphism of superspaces   
$$dm : \mathsf{Dist}(H_1)\otimes \mathsf{Dist}(H_2)=\mathsf{Dist}(H_1\times H_2, \ker\epsilon_{H_1}\otimes K[H_2] +
K[H_1]\otimes\ker\epsilon_{H_2})\to \mathsf{Dist}(G)$$ is induced by the multiplication of $\mathsf{Dist}(G)$
(cf. \cite{jan}, Part I, 7.4(2)).  

\section{Actions and representations}

Let $X$ be an affine superscheme. Assume that an affine supergroup $G$ acts on $X$ on the right. It is equivalent to the condition that $K[X]\in\mathsf{SMod}^{K[G]}$ and $\tau_{X}=\tau_{K[X]}$ is a superalgebra morphism. 
A left action of $G$ on $X$ is defined symmetrically.

For example, $\rho_r$ and $\rho_l$ are 
corresponding to the right actions $m_r : G\times G\to G$ and $m_l : G\times G\to G$ respectively, where
$m_r(g_1, g_2)=g_1 g_2$ and $m_l(g_1, g_2)=g_2^{-1}g_1 , g_1, g_2\in G(A), A\in \mathsf{SAlg}_K$. Besides, $G$ acts on itself by conjugations, $(g_1, g_2)\mapsto g_2^{-1}g_1g_2$. The corresponding comorphism coincides with $\nu_l(f)=\sum (-1)^{|f_1||f_2|} f_2\otimes s_G(f_1)f_2, f\in K[G]$.
In particular, $H\unlhd G$ if and only if $\nu_l(I_H)\subseteq I_H\otimes K[G]$ (cf. \cite{zub1}).

By definition, the category of left/right $G$-supermodules coincides with the category of right/left $K[G]$-supercomodules. 
Denote them by $G-smod$ and $smod-G$ respectively.
Both categories have an endofunctor $V\to \Pi V$, called {\it parity shift}, such that $\Pi V$ coincides with $V$ as a $K[G]$-comodule and $(\Pi V)_i=V_{i+1}, i=0, 1$, where the sum $i+1$ is computed in $\mathbb{Z}_2$.  

There is a one-to-one correspondence between $G$-supermodule structures on a finite dimensional superspace $V$ and {\it linear representations}
$G\to GL(V)$ (cf. \cite{jan, zub1}). If $\tau_V(v)=\sum v_1\otimes f_2\in V\otimes K[G]$, then $g\in G(A)$ acts on $V\otimes A$ by the even $A$-linear automorphism $\tau(g)(v\otimes 1)=\sum v_1\otimes g(f_2)$. 
In other words, $V$ is a $G$-supermodule if and only if the group functor $G$ acts on the functor $V_a$ so that for any $A\in\mathsf{SAlg}_K$ the group $G(A)$ acts on $V_a(A)=V\otimes A$ by even $A$-linear automorphisms. 

If $V$ is infinte dimensional, then a $G$-supermodule structure on $V$ is uniquely defined by a directed family of finite dimensional subrepresentations $\{\tau_i : G\to GL(V_i) | i\in I\}$, where $(I, \leq)$ is a directed set such that $V_i\subseteq V_j$ if and only if $i\leq j$, $\bigcup_{i\in I} V_i=V$ and $\tau_i |_{V_i\bigcap V_j}=
\tau_j |_{V_i\bigcap V_j}$ for all $i, j\in I$.

If $V$ is an one dimensional $G$-supermodule, then its supercomodule structure is uniquely defined by a group-like element
$f\in K[G]$ so that $\tau_V(v)=v\otimes f$. All group-like elements of $K[G]$ form a group of characters $X(G)$ of $G$.

\section{General linear supergroup}

Let $V$ be a finite dimensional superspace. The group functor $A\to \mathsf{End}_A(V\otimes A)_{0}^*$ is an algebraic supergroup. It is called a {\it general linear supergroup} and denoted by $GL(V)$. If $\dim V_0=m, \dim V_1=n$, then $GL(V)$ is also denoted by $GL(m|n)$. 

Fix a homogeneous basis of $V$, say $v_i, 1\leq i\leq m+n$, where $|v_i|=0$ provided $1\leq i\leq m$, otherwise
$|v_i|=1$.
It is easy to see that
$K[GL(m|n)]=K[c_{ij}|1\leq i, j\leq m+n]_{d}$, where $|c_{ij}|=|v_i|+|v_j|$. More precisely, the generic matrix $C=(c_{ij})_{1\leq i, j\leq m+n}$ has a block form 
$$\left(\begin{array}{cc}
C_{00} & C_{01} \\
C_{10} & C_{11}
\end{array}\right)
$$
with even $m\times m$ and $n\times n$ blocks $C_{00}$ and $C_{11}$, and odd $m\times n$ and $n\times m$ blocks
$C_{01}$ and $C_{10}$ respectively. Besides, 
$$\Delta_{GL(m|n)}(c_{ij})=\sum_{1\leq k\leq m+n} c_{ik}\otimes c_{kj}, \ \epsilon_{GL(m|n)}(c_{ij})=\delta_{ij},$$
and $d=\det(C_{00})\det(C_{11})$. The right supercomodule structure of $V$ is defined by
$$\tau_V(v_i)=\sum_{1\leq j\leq m+n} v_j\otimes c_{ji}.$$

The element $Ber(C)=\det(C_{00}-C_{01}C_{11}^{-1}C_{10})\det(C_{11})^{-1}$ is called {\it Berezinian}.
This is a group-like element of the Hopf superalgebra $K[GL(m|n)]$ (cf. \cite{ber}).

Observe that $\mathbb{Z}[GL(m|n)]=\mathbb{Z}[c_{ij}| 1\leq i, j\leq m+n]_d$ is a $\mathbb{Z}$-form of $K[GL(m|n)]$. Moreover, $\mathbb{Z}[GL(m|n)]_r$ ($\mathbb{Z}[GL(m|n)]_l$) is a
$\mathbb{Z}$-form of $K[GL(m|n)]_r$ (respectively, $K[GL(m|n)]_l$).

\section{Borel supersubgroups and root systems}

Let $G=GL(m|n)$. We fix the standard maximal torus $T$ such that $T(A)$ consists of all diagonal 
matrices from $G(A), A\in\mathsf{SAlg}_K$. Denote by $X(T)$ the group of characters of $T$.
We identify $X(T)$ with the additive group  
$\mathbb{Z}^{m+n}$. In particular, any $\lambda\in X(T)$ has a form
$\sum_{1\leq i\leq m+n}\lambda_i\epsilon_i$, where 
$$\epsilon_i(t)=t_i, t=\left(\begin{array}{cccc}
t_1 & 0 & \ldots & 0 \\
0   & t_2 & \ldots & 0 \\
\vdots & \vdots & \ldots & 0 \\
0 & 0 & \dots & t_n
\end{array}\right)\in T(A), A\in\mathsf{SAlg}_K ,$$  
and any $\lambda_i$ is an integer. 
For a character $\lambda\in X(T)$ denote $\sum_{1\leq i\leq m+n}\lambda_i$ by $|\lambda|$. 
Define a bilinear form on $X(T)\otimes_{\mathbb{Z}} \mathbb{Q}$ setting
$(\epsilon_i, \epsilon_j)=\delta_{ij}(-1)^{|v_i|}$. Let $\epsilon'_i$ denote $(-1)^{|v_i|}\epsilon_i$.
Then $(\epsilon_i, \epsilon'_j)=\delta_{ij}$.

Consider an root system $$\Phi_w=\{\epsilon_{wi}-\epsilon_{wj}| 1\leq i\neq j\leq m+n\},$$ where
$w\in S_{m+n}$. The corresponding coroots are $(\epsilon_{wi}-\epsilon_{wj})^{\vee}=\epsilon'_{wi}-\epsilon'_{wj}$.

Its positive part $$\Phi^+_w =\{\epsilon_{wi}-\epsilon_{wj}| 1\leq i < j\leq m+n\}$$
corresponds to a Borel supersubgroup $B^+_{w}$ that is the stabilizer of the full flag
$$V_1\subseteq V_2\subseteq\ldots\subseteq V_i\subseteq\ldots\subseteq V_{m+n}=V,$$
where $V_i=\sum_{1\leq s\leq i} Kv_{ws}, 1\leq i\leq m+n$. 

The opposite Borel supersubgroup $B^-_w$ corresponds to the negative
part $$\Phi_w^-=\{\epsilon_{wi}-\epsilon_{wj}| 1\leq i > j\leq m+n\}$$
of $\Phi_w$. In other words, $B^-_w$ is the stabilizer of the full flag
$$W_1\subseteq W_2\subseteq\ldots\subseteq W_i\subseteq\ldots\subseteq W_{m+n}=V,$$
where $W_i=\sum_{m+n-i+1\leq s\leq m+n} Kv_{ws}, 1\leq i\leq m+n$.

The simple roots of $\Phi^+_w$ form a subset $$\Pi_w=\{\alpha_i=\epsilon_{wi}-\epsilon_{w(i+1)}| 1\leq i < m+n\}.$$
The root system $\Phi_w$ defines a partial order $<_w$ on the weight lattice $X(T)$ by $\mu <_w\lambda$ if 
$\lambda-\mu\in\sum_{\alpha\in\Phi_w^+} \mathbb{N}_+\alpha=\sum_{\alpha\in\Pi_w} \mathbb{N}_+\alpha$.

An root $\alpha=\epsilon_i-\epsilon_j$ has a {\it parity} $p(\alpha)=|v_i|+|v_j|$. 
Denote $\{\alpha\in\Phi_w | p(\alpha)=a\}$ by $(\Phi_w)_a$, where $a=0, 1$. 

For any $\alpha\in (\Phi_w)_0$ one can define an reflection $s_{\alpha}$ such that
$s_{\alpha}(\lambda)=\lambda-(\lambda, \alpha^{\vee})\alpha$.
It is easy to see that if $\alpha=\epsilon_i-\epsilon_j$, then $s_{\alpha}=(ij)$. 
These reflections generate the {\it Weyl subgroup} $S_m\times S_n\subseteq S_{m+n}$.  

Any $w\in S_{m+n}$ can be uniquely decomposed as $w=w_0 w_1$, where $w_0\in S_m\times S_n$ and $w_1$ satisfies $w_1^{-1}1<\ldots <w_1^{-1}m, w_1^{-1}(m+1)<\ldots <w_1^{-1}(m+n)$ (cf. \cite{brunkuj}).
Then 
$$(\Phi^+_w)_0=\{\epsilon_{w_0 i}-\epsilon_{w_0 j}| 1\leq i < j\leq m, m+1\leq i< j\leq m+n\}$$ and    
$$(\Phi^-_w)_0=\{\epsilon_{w_0 i}-\epsilon_{w_0 j}| 1\leq j < i\leq m, m+1\leq j < i\leq m+n\}=
-(\Phi^+_w)_0 .$$
In other words, $(B^+_w)_{ev}$ ($(B^-_w)_{ev}$) coincides with $(B^+_{w_0})_{ev}$ ($(B^-_{w_0})_{ev}$). If $w_0=1$, then 
they are the upper triangular (respectively, lower triangular) subgroup of $G_{ev}=GL(m)\times GL(n)$.

A Borel supersubgroup $B=B^-_w$ is called {\it standard}, whenever $(\Pi_w)_0$ is a set of simple roots of
$B_{ev}$. For example, if $w=w_0 w_1$ is the above decomposition with $w_0\in S_m\times S_n$, then  $B'=B^-_{w_0}$ is standard
and $B'_{ev}=B_{ev}$.   
In general, a Borel supergroup is not necessary standard. For example, set $m=n=2$ and $w=(1342)$. Then all simple roots 
$\epsilon_3 -\epsilon_1 , \epsilon_1 -\epsilon_4 , \epsilon_4 -\epsilon_2$ are odd! 

An root $\alpha=\epsilon_i-\epsilon_j$ corresponds to the one dimensional unipotent supersubgroup 
$U_{ij}(A)=\{E+aE_{ij}| a\in A_{|\alpha|}\}, A\in\mathsf{SAlg}_K, 1\leq i\neq j\leq m+n$.
We denote $U_{ij}$ by $U_{\alpha}$ also. 

Any Borel supersubgroup $B_w^{\pm}$ is a semidirect product of the torus $T$ and its unipotent radical 
$U_w^{\pm}$. By definition, $U_w^+$ ($U_w^-$) is the largest supersubgroup of $B_w^{+}$ (respectively, of $B_w^{-}$) that acts trivially on each quotient $V_{i+1}/V_i$ (respectively, on each quotient $W_{i+1}/W_i$), $1\leq i\leq m+n-1$. 

Denote $\sum_{\alpha\in(\Phi_w^+)_0} \alpha$ by $\rho_0(w)$, and $\sum_{\alpha\in(\Phi_w^+)_1}\alpha$ by $\rho_1(w)$. 
The above remark infers that the element $\rho_0(w)$ depends of $w_0$ only and 
$$\rho_0(w)=\sum_{1\leq i\leq m}(m-2i+1)\epsilon_{w_0 i} +\sum_{m+1\leq j\leq m+n}(n-2(j-m)+1)\epsilon_{w_0 j}=w_0 \rho(id).$$
Analogously, $\rho_1(w)=w_0 \rho_1(w_1)$ but $\rho_1(w_1)$ actually depend of $w_1$. For example, 
$$\rho_1(id)=\sum_{1\leq i\leq m}n\epsilon_i -\sum_{m+1\leq j\leq m+n}m\epsilon_j.$$
But if $m=n=2$ and $w=(2 3)$, then
$$\rho_1(w)=2\epsilon_1-2\epsilon_4\neq 2(\epsilon_1+\epsilon_2-\epsilon_3-\epsilon_4).$$
Set $\rho(w)=\frac{1}{2}(\rho_0(w)-\rho_1(w))$. Define a {\it dot action} $u._w\lambda=u(\lambda+\rho(w))-\rho(w), \lambda\in X(T), u\in S_m\times S_n$.
This action depends of $w$.

\section{Induced supermodules}

If $H\leq G$ and $V$ is a left $H$-supermodule, then $ind^G_H V= V\square_{K[H]} K[G]$, where $K[G]$ is regarded as a left  $K[H]$-supercomodule via $(\pi_H\otimes id_{K[G]})\Delta_G$. Moreover, $ind^G_H V$ is a left $G$-supermodule via $(id_V \otimes\rho_G)$.  

The above definition is different from the definition given in \cite{jan}. 
More precisely, consider $K[G]$ as a left $H$-supermodule via the right comodule map $\rho_r|_{H}=(id_{K[G]}\otimes\pi_H)\Delta_G$. Then $ind^G_H V=
(V\otimes K[G])^H$, where $H$ acts diagonally on $V\otimes K[G]$. In this setting $ind^G_H V$ is a left $G$-supermodule via $(id_V\otimes\rho_l)$.  These two $G$-supermodules are naturally isomorphic via the map $id_V\otimes s_G$. 

Consider a $K$-functor $\mathfrak{Mor}(G, V_a)$ defined as 
$$\mathfrak{Mor}(G, V_a)(A)=\mathsf{Mor}(G|_{\mathsf{SAlg}_A}, V_a|_{\mathsf{SAlg}_A}), A\in\mathsf{SAlg}_K .$$
It is clear that each $\mathfrak{Mor}(G, V_a)(A)$ has natural structure of right $A$-module.
  
Observe that $G|_{\mathsf{SAlg}_A}\simeq SSp_A \ K[G]\otimes A$ and by Yoneda's lemma  
$$\mathfrak{Mor}(G, V_a)(A)\simeq V_a|_{\mathsf{SAlg}_A}(K[G]\otimes A)=V\otimes K[G]\otimes A .$$
More precisely, an element $v\otimes f\otimes a\in V\otimes K[G]\otimes A$ represents a morphism $\phi\in\mathfrak{Mor}(G, V_a)(A)$
if and only if $\phi(B)(x)=v\otimes x(f)a$ for any $x\in G(B), B\in\mathsf{SAlg}_A$.
Thus $\mathfrak{Mor}(G, V_a)$ can be identified with $(V\otimes K[G])_a$.

The supergroups $G$ and $H$ act on the functor $\mathfrak{Mor}(G, V_a)$ on the right by the rule:
$$(g\phi)(B)(x)=\phi(B)(xg), (h\phi)(B)(x)=h(\phi(B)(x)),$$ 
$$g\in G(A), x\in G(B), h\in H(A), \phi\in\mathfrak{Mor}(G, V_a)(A),$$ 
$$A\in\mathsf{SAlg}_K, B\in\mathsf{SAlg}_A.$$   
These two actions commute each other. Define a subfunctor $\mathfrak{ind}^G_H V$ of $\mathfrak{Mor}(G, V)$ such that
for any $A\in\mathsf{SAlg}_K$ an element $\phi\in\mathfrak{Mor}(G, V_a)(A)$ belongs to $\mathfrak{ind}^G_H V (A)$
if and only if 
$$\phi(B)(hg)=h(\phi(B)(g)), \forall g\in G(B), \forall h\in H(B), \forall B\in\mathsf{SAlg}_A .$$
It is clear that $\mathfrak{ind}^G_H V$ is a $G$-stable subfunctor. 
\begin{lm}\label{otherdefofind}
We have an isomorphism $(ind^G_H V)_a\simeq \mathfrak{ind}^G_H V$ that commutes wuth the action of $G$. 
\end{lm}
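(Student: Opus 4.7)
My plan is to use the Yoneda identification $\mathfrak{Mor}(G, V_a)(A) \simeq V \otimes K[G] \otimes A$ recalled above and show that the $H$-equivariance condition defining $\mathfrak{ind}^G_H V(A)$ translates precisely to the cotensor condition defining $(ind^G_H V)_a(A) = (V \Box_{K[H]} K[G]) \otimes A$.

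First, I will write everything in Sweedler notation: $\Delta_G(f) = \sum f_{(1)} \otimes f_{(2)}$ and $\tau_V(v) = \sum v_{(0)} \otimes v_{(1)}$ with $v_{(1)} \in K[H]$. For $z = \sum_i v_i \otimes f_i \otimes a_i \in V \otimes K[G] \otimes A$, the corresponding morphism $\phi$ satisfies $\phi(B)(x) = \sum_i v_i \otimes x(f_i) a_i$ for $x \in G(B)$. A direct computation, using that the group law in $G(B)$ is the convolution $(hg)(f) = \sum h(f_{(1)}) g(f_{(2)})$ (with $h$ viewed in $G(B)$ via $\pi_H$) and that $H(B)$ acts $B$-linearly on $V_a(B) = V \otimes B$ via $h(v \otimes 1) = \sum v_{(0)} \otimes h(v_{(1)})$, yields
\begin{align*}
\phi(B)(hg) &= \sum_i v_i \otimes h(\pi_H(f_{i,(1)})) \, g(f_{i,(2)}) \, a_i, \\
h(\phi(B)(g)) &= \sum_i v_{i,(0)} \otimes h(v_{i,(1)}) \, g(f_i) \, a_i.
\end{align*}

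The key step is a universal test. Set $B_0 = K[H] \otimes_K K[G] \otimes_K A \in \mathsf{SAlg}_A$ and let $h_0 \in H(B_0)$, $g_0 \in G(B_0)$ be the canonical inclusions into the first and second tensor factors. Substituting $h_0, g_0$ into the two formulas above yields the identity
$$\sum_i v_i \otimes \pi_H(f_{i,(1)}) \otimes f_{i,(2)} \otimes a_i = \sum_i v_{i,(0)} \otimes v_{i,(1)} \otimes f_i \otimes a_i$$
in $V \otimes K[H] \otimes K[G] \otimes A$, whose two sides are, respectively, $(id_V \otimes \tau'_{K[G]} \otimes id_A)(z)$ and $(\tau_V \otimes id_{K[G]} \otimes id_A)(z)$, where $\tau'_{K[G]} = (\pi_H \otimes id_{K[G]})\Delta_G$. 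Since $A$ is $K$-flat, this is equivalent to $z \in (V \Box_{K[H]} K[G]) \otimes A$. Conversely, given the cotensor identity, for arbitrary $h \in H(B)$, $g \in G(B)$ the $K$-algebra map $\Phi_{h, g} \colon B_0 \to B$, $p \otimes q \otimes a \mapsto h(p) g(q) \cdot a$, applied to the identity recovers the $H$-equivariance of $\phi$; the two conditions are thus equivalent.

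Finally, the isomorphism commutes with the $G$-action because right translation $(g\phi)(B)(x) = \phi(B)(xg)$ corresponds under Yoneda to applying $\rho_r = \Delta_G$ to the $K[G]$-factor of $z$, which is precisely the $G$-supermodule structure $id_V \otimes \rho_G$ on $ind^G_H V$. The main technical point is verifying that no Koszul signs appear in the two displayed formulas; this is guaranteed by the $B$-linearity of the $H$-action on $V \otimes B$ and the evenness of the structure morphisms $\pi_H$, $\Delta_G$, $\tau_V$, so that any sign rearrangement is absorbed into the ordering of the factors in $B_0$.
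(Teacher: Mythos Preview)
Your proof is correct and follows essentially the same approach as the paper. The paper's own proof is a terse two-line version of exactly your argument: it notes the easy inclusion $(ind^G_H V)_a\subseteq \mathfrak{ind}^G_H V$, and for the reverse inclusion it uses precisely your universal test, setting $B=K[H]\otimes K[G]\otimes A$ with $h=id_{K[H]}\otimes 1_{K[G]}\otimes 1_A$ and $g=1_{K[H]}\otimes id_{K[G]}\otimes 1_A$.
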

\begin{proof}
The above identification implies $(ind^G_H V)_a\subseteq \mathfrak{ind}^G_H V$. Considering $B=K[H]\otimes K[G]\otimes A$ and $g=1_{K[H]}\otimes id_{K[G]}\otimes 1_A, h=id_{K[H]}\otimes 1_{K[G]}\otimes 1_A$ we obtain the reverse inclusion.  
\end{proof}
Let $G$ be an affine supergroup and $H$ be a supersubgroup of $H$. Assume that there are an affine superscheme $U$ and an isomorphism of affine superschemes $\phi : G\to H\times U$ that commutes with the natural left $H$-actions on both $G$ and $H\times U$. The next lemma follows immediately by Lemma \ref{otherdefofind}. Nevertheless, we give another proof in terms of Hopf superalgebras. Denote the dual superalgebra morphism $K[H]\otimes K[U]\to K[G]$ by $\phi^*$. 
\begin{lm}\label{indisom} (see \cite{zub2}, Lemma 5.1)
For any $H$-supermodule $V$ the map $(id_V\otimes\phi^*)(\tau_V\otimes id_{K[U]})$ is a superspace isomorphism of $V\otimes K[U]$ onto $ind^G_H V$.
\end{lm}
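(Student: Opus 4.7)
The plan is to factor the claimed map as a composition of two isomorphisms, each of which is essentially the cotensor-product version of a well-known Hopf-algebraic identity.

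First, I would reformulate the hypothesis at the Hopf-algebra level. The isomorphism $\phi : G \to H \times U$ commutes with the left $H$-action by multiplication on the first factor, which translates to $\phi^* : K[H]\otimes K[U] \to K[G]$ being an isomorphism of left $K[H]$-supercomodules, where $K[H]\otimes K[U]$ carries the comodule structure $\Delta_H\otimes id_{K[U]}$ and $K[G]$ carries the structure $\lambda_G = (\pi_H\otimes id_{K[G]})\Delta_G$ used in defining $ind^G_H V$. Since $\phi^*$ is a left $K[H]$-comodule isomorphism, the induced map $id_V\otimes \phi^* : V\otimes K[H]\otimes K[U]\to V\otimes K[G]$ restricts to an isomorphism
\[
V\,\square_{K[H]}\,(K[H]\otimes K[U])\;\xrightarrow{\;\sim\;}\;V\,\square_{K[H]}\,K[G]\;=\;ind^G_H V.
\]

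Next, I would identify the left-hand side with $V\otimes K[U]$. The standard computation shows that the map $\tau_V\otimes id_{K[U]} : V\otimes K[U]\to V\otimes K[H]\otimes K[U]$ has image inside the cotensor product (this is coassociativity of $\tau_V$), and its inverse is $(id_V\otimes\epsilon_H\otimes id_{K[U]})$ restricted to the cotensor product. More precisely, the image lies in the cotensor by coassociativity $(\tau_V\otimes id)\tau_V=(id\otimes\Delta_H)\tau_V$, and conversely, applying $id_V\otimes id_{K[H]}\otimes\epsilon_H$ to the cotensor equation recovers any element as $\tau_V(v)$ for $v=(id_V\otimes\epsilon_H)(x)$ in the factor-swapped sense. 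The composition
\[
V\otimes K[U]\;\xrightarrow{\tau_V\otimes id_{K[U]}}\;V\,\square_{K[H]}\,(K[H]\otimes K[U])\;\xrightarrow{id_V\otimes\phi^*}\;ind^G_H V
\]
is then precisely the map in the statement.

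Finally, to verify directly that $(id_V\otimes\phi^*)(\tau_V\otimes id_{K[U]})$ lands in $ind^G_H V$, one writes $\tau_V(v)=\sum v_{(0)}\otimes v_{(1)}$ and computes both sides of the cotensor defining equation; they agree because coassociativity of $\tau_V$ matches the left $K[H]$-comodule morphism property $\lambda_G\phi^* = (id_{K[H]}\otimes\phi^*)(\Delta_H\otimes id_{K[U]})$.

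The only genuine difficulty is bookkeeping: one must make sure that the comodule conventions (right versus left $K[H]$-comodule, left versus right $H$-action on $G$) are aligned with those in the excerpt, and that no sign from the symmetry $t$ is forgotten. Once the conventions are pinned down, the argument is purely formal, and indeed Lemma \ref{otherdefofind} already gives a functor-theoretic proof by taking $B=K[H]\otimes K[U]\otimes A$ with $g=\phi^{-1}\circ(1_{K[H]}\otimes id_{K[U]})\otimes 1_A$ and $h=id_{K[H]}\otimes 1_{K[U]}\otimes 1_A$, which is the shortcut the author alludes to.
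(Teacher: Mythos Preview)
Your proposal is correct and is essentially the same argument as the paper's, only packaged more conceptually: the paper verifies directly, element by element, that the map lands in the cotensor product (using the $H$-equivariance identity $(\pi_H\otimes id_{K[G]})\Delta_G\phi^*=(id_{K[H]}\otimes\phi^*)(\Delta_H\otimes id_{K[U]})$ and coassociativity of $\tau_V$) and then inverts it by applying $\epsilon_H$, which is exactly your factorization $V\otimes K[U]\simeq V\square_{K[H]}(K[H]\otimes K[U])\simeq V\square_{K[H]}K[G]$ unwound. Your remark about the functor-theoretic shortcut via Lemma~\ref{otherdefofind} also matches the paper, which explicitly notes that route before giving the Hopf-algebraic proof.
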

\begin{proof}
Since $G\to H\times U$ is $H$-equivariant, it implies $$(\pi_H\otimes id_{K[G]})\Delta_G\phi^*=
(id_{K[H]}\otimes\phi^*)(\Delta_H\otimes id_{K[U]}).$$ 
Combining with (co)associativity of $\tau_V$ we see that
$V\otimes K[U]\to ind^G_H V$ is a superspace monomorphism. Conversely, if $\sum v\otimes \phi^*(h\otimes u)\in ind^G_H V$, where $v\in V, h\in K[H], u\in K[U]$, then
$$\sum v_1\otimes g_2\otimes \phi^*(h\otimes u)=\sum v\otimes h_1\otimes\phi^*(h_2\otimes u).$$
Here $\tau_V(v)=\sum v_1\otimes g_2, \Delta_H(h)=\sum h_1\otimes h_2$. Since $\phi^*$ is an isomorphism, we have
$$\sum v_1\otimes g_2\otimes h\otimes u=\sum v\otimes h_1\otimes h_2\otimes u$$
and therefore,
$$\sum v_1\otimes \phi^*(g_2\otimes (\epsilon_H(h)u)=\sum v\otimes \phi^*(h_1\otimes\epsilon_H(h_2)u)
=\sum v\otimes\phi^*(h\otimes u).
$$ 
\end{proof}
\begin{rem}\label{trivialcase}
Let $U\leq G$ and the multiplication map $m$ induces an isomorphism of affine superschemes $H\times U\to G$, then $\phi=m^{-1}$
and we identify $ind^G_H V$ with $V\otimes K[U]$ as above. Then $U$ acts on the last superspace diagonally by $\rho_r$ on $K[U]$ and trivially on $V$.
\end{rem}

\section{Quotients}

Let $G$ be an algebraic supergroup and $H\leq G$. The sheafification of the $K$-functor $A\to G(A)/H(A), A\in \mathsf{SAlg}_K,$ is called a \emph{sheaf quotient} and it is denoted by
$G / H$.  It was proved in \cite{maszub} that a quotient sheaf $G / H$ is a Noetherian superscheme and the quotient morphism $\pi : G\to X$ is affine and faithfully flat. 

Observe that $G/H$ is affine if and only if $(G/H)_{ev}=G_{ev}/H_{ev}$ is affine if and only if $G_{res}/ H_{res}$ is
(cf. \cite{maszub}, Corollary 8.15 and Proposition 9.3). In particular, $G/G_{ev}$ is always affine (see also \cite{amas}).
If $H$ is finite, then $G/H$ is also affine (combine the above criterion with \cite{jan}, I.5.5(6); see also \cite{zub3}).   

Assume that $H\unlhd G$. Then $G/ H\simeq SSp \ K[G]^H$ is an algebraic supergroup. Let $L$ be a closed subsupergroup of $G$ and $I_L$ be its
defining Hopf superideal.
A sheafification of the group subfunctor $A\to L(A)H(A)/H(A), A\in \mathsf{SAlg}_K,$ in $G/ H$, is denoted by $\pi(L)$.
It is a closed subsupergroup of $G/ H$ defined by the Hopf superideal $K[G]^H\bigcap I_L$ (cf. \cite{zub1}, Theorem 6.1). 
The closed subsupergroup $\pi^{-1}(\pi(L))$ is denoted by $LH$. As it was observed in \cite{zub1}, p.735, $LH$ is a sheafification of
the group subfunctor $A\to L(A)H(A), A\in \mathsf{SAlg}_K$. Its defining Hopf superideal coincides with $K[G](K[G]^H\bigcap I_L)$.

\section{Some standard homological properties of supergroups}

Let $G$ be an algebraic supergroup and $H$ be a subgroup of $G$. 
A $k$-th right derived functor $R^k ind^G_H ?$ is denoted by $H^k(G/H, ?)$ also.

The following theorem is a superanalog of Theorem 4.1 from \cite{cps} that is called {\it Mackey imprimitivity theorem}.       
\begin{theorem}\label{simpleimprim}
If $L\to G/H$ is an epimorphism of sheaves, then for any $H$-supermodule $V$ and any $k\geq 0$ we have an isomorphism of $L$-supermodules $$(H^k (G/H,  V)|_{L}\simeq H^k (L/{L\bigcap H}, V|_{L\bigcap H}).$$
\end{theorem}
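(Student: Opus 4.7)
My plan follows the standard derived-functor strategy used in \cite{cps}, transposed to the super setting. The argument has two genuinely separate parts: the $k=0$ Mackey isomorphism, which uses the hypothesis directly, and the passage to $k \geq 1$, which is purely homological and rests on an acyclicity assertion that I expect to be the main technical obstacle.

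First I would establish the quotient identification. The map $L \to G/H$ factors through $L/(L\cap H)$, and the induced morphism $L/(L\cap H) \to G/H$ is injective as a sheaf (its would-be fibres over a point coming from $g \in G(A)$ are controlled by $L(A) \cap g H(A)$, which is an $(L \cap H)$-torsor). By hypothesis it is also a sheaf epimorphism, so it is an isomorphism of Noetherian superschemes. Next, for the base case $k = 0$, I would build the natural $L$-equivariant isomorphism $(ind^G_H V)|_L \simeq ind^L_{L \cap H}(V|_{L \cap H})$ using the functorial description $\mathfrak{ind}^G_H V$ from Lemma \ref{otherdefofind}: restrict $\phi : G \to V_a$ to $L$. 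The equivariance condition transfers because $L \cap H \leq H$; injectivity is immediate; surjectivity follows from the sheaf isomorphism $L/(L\cap H) \simeq G/H$ of the previous step, since this allows one to glue a given $(L\cap H)$-equivariant map on $L$ to the required $H$-equivariant map on $G$ after fpqc descent.

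For $k \geq 1$ I would take an injective resolution $V \to I^\bullet$ in $H$-smod. Because $\text{res}_L$ is exact, $H^k(G/H, V)|_L$ is the $k$-th cohomology of $(ind^G_H I^\bullet)|_L$, which by the $k=0$ Mackey isomorphism applied term-by-term equals the $k$-th cohomology of $ind^L_{L\cap H}(I^\bullet|_{L\cap H})$. The desired identification with $H^k(L/(L\cap H), V|_{L\cap H})$ then reduces to the acyclicity statement: if $I$ is an injective $H$-supermodule, then $I|_{L\cap H}$ is $ind^L_{L\cap H}$-acyclic.

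The main obstacle is this last point. My plan is to use that every injective $H$-supermodule is a direct summand of one of the form $W \otimes K[H]$, where $W$ is a supervector space with trivial action and $K[H]$ carries the right regular $H$-action (so $W \otimes K[H] \simeq ind^H_{1} W$). Restriction to $L \cap H$ preserves this shape, and by the tensor identity the problem collapses to showing $H^k(L/(L\cap H), K[H]) = 0$ for $k \geq 1$, where $K[H]$ has the regular $(L\cap H)$-action coming from restriction. I would establish this via transitivity of induction together with the identification $L/(L\cap H) \simeq G/H$: realising the regular $H$-module structure as the global sections of a faithfully flat sheaf over $G/H$ (equivalently over $L/(L\cap H)$), the higher cohomology vanishes by the affine/faithfully-flat properties of the quotient morphism recorded in Section 9. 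Each of these ingredients---the structure of injectives, the tensor identity, transitivity---needs to be verified in the super context using the formalism of the earlier sections, and it is here that the most care is required.
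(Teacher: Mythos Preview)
Your plan coincides with the paper's: both prove the degree-$0$ isomorphism $(ind^G_H V)|_L\simeq ind^L_{L\cap H}(V|_{L\cap H})$ via the functorial model $\mathfrak{ind}$ of Lemma~\ref{otherdefofind} together with fppf descent (the paper writes the inverse map out explicitly, factoring each $g\in G(B)$ as $hl$ over a covering and setting $\phi(g)=h\,\psi(l)$), and both pass to $k\geq 1$ by a universal-$\delta$-functor argument. The paper simply records that $\{H^k(G/H,V)|_L\}_{k\geq 0}$ is erasable by injectives and then stops at the degree-$0$ identification, leaving implicit precisely the step you isolate as the main obstacle---namely that identifying $R^k(ind^L_{L\cap H}\circ res)$ (derived in $H\text{-}smod$) with $(R^k ind^L_{L\cap H})\circ res$ requires $I|_{L\cap H}$ to be $ind^L_{L\cap H}$-acyclic for every injective $H$-supermodule $I$. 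Your proposed resolution via the tensor identity and a faithfully-flat sheaf argument is workable but more elaborate than necessary; the standard shortcut is that restriction along the closed inclusion $L\cap H\leq H$ takes injective $H$-supermodules to injective $(L\cap H)$-supermodules (the super analogue of \cite{jan}, I.4.12), so $I|_{L\cap H}$ is already injective and the comparison of derived functors is immediate.
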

\begin{proof}
It is clear that $\{H^k (G/H,  V)|_{L}\}_{k\geq 0}$ is a {\it $\delta$-functor} erasable by injectives (cf. \cite{lang}, chapter X, \S 7). By Theorem 7.1 and Corollary 7.2  from  \cite{lang}, chapter X, we obtain $H^k (G/H,  V)|_{L}\simeq R^k H^0 (G/H, V)|_{L}=
R^k (ind^G_H V)|_{L}$. It remains to prove that $(ind^G_H V)|_{L}\simeq ind^L_{L\bigcap H} V|_{L\bigcap H}$, or
$(\mathfrak{ind}^G_H V)|_{L}\simeq \mathfrak{ind}^L_{L\bigcap H} V|_{L\bigcap H}$. 

Consider a subfunctor $S\subseteq G$ such that $S(A)=H(A)L(A), A\in\mathsf{SAlg}_K$. Since the functor $S$ obviously commutes with direct products of superalgebras, the sheafification $\tilde{S}$ of $S$ coincides with $G$. More precisely, for any $A\in\mathsf{SAlg}_K$ and $g\in G(A)$ there is an fppf covering $B$ of $A$ such that
$G(\iota^B_A)(g)\in S(B)$, where $\iota^B_A : A\to B$ is the canonocal embedding (cf. \cite{zub1}, p.722).
Thus the restriction $\phi\mapsto \phi|_{L}$ induces an embedding $(\mathfrak{ind}^G_H V)|_{L}\to \mathfrak{ind}^L_{L\bigcap H} V|_{L\bigcap H}$. Conversely, for a given $\psi\in \mathfrak{ind}^L_{L\bigcap H} V(A)$ one can define $\phi\in\mathfrak{ind}^G_H V (A)$ such that $\phi|_L=\psi$ as follows. For any $B\in\mathsf{SAlg}_A$ and $g\in G(B)$ choose an fppf covering $C$ of $B$ such that
$G(\iota^C_B)(g)=hl, h\in H(C), l\in L(C)$. Set 
$\phi(B)(g)=V_a(\iota^C_B)^{-1}(h\psi(l))$. We leave to the reader the routine verification that the definition of $\phi(B)(g)$
does not depend on the choice of $C$ and functorial on $B$. Thus our theorem follows.
\end{proof}
Let $G$ be a normal supersubgroup of an algebraic supergroup $S$. Assume that there is a supersubgroup $L$ of $S$ such that $L\bigcap G=1$ and $S=LG$. In other words, $S$ is isomorphic to a semidirct product $G\rtimes L$. Assume also that $L$ stabilizes $H$. Then $LH\simeq H\rtimes L$. The following corollary is a (super)analog of I.4.9(1), \cite{jan}. 
\begin{cor}\label{semidirect}
For any $k\geq 0$ and any $H\rtimes L$-supermodule $V$ we have $H^k(G\rtimes L/H\rtimes L, V)|_G\simeq H^n(G/H, V|_H)$.
\end{cor}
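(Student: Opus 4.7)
The plan is to reduce the corollary to the Mackey imprimitivity theorem (Theorem \ref{simpleimprim}) applied to the triple consisting of the ambient supergroup $S = G\rtimes L$, the distinguished supersubgroup $H\rtimes L$ (playing the role of ``$H$'' in that theorem), and the supersubgroup $G$ (playing the role of ``$L$'' in that theorem). Once one has these roles set up, the desired isomorphism
$$H^k(S/(H\rtimes L), V)|_{G} \simeq H^k(G/(G\cap (H\rtimes L)), V|_{G\cap (H\rtimes L)})$$
is immediate from Theorem \ref{simpleimprim}, so the whole task is to verify its two hypotheses: that $G\to S/(H\rtimes L)$ is an epimorphism of sheaves, and that $G\cap(H\rtimes L)=H$ as subfunctors of $S$.

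For the first hypothesis, I would argue that already on the level of group functors one has $S = G\cdot (H\rtimes L)$. Indeed, $S=LG$ by assumption, and since $L\leq H\rtimes L$, one gets $S = LG \subseteq (H\rtimes L)\cdot G$; using normality of $G$ in $S$, this rewrites as $G\cdot (H\rtimes L)$. Consequently, on any $A$-point an element of $S(A)$ is, after passing to a suitable fppf covering (as in the proof of Theorem \ref{simpleimprim}), of the form $g\cdot s$ with $g\in G$ and $s\in (H\rtimes L)(A)$, which is exactly the epimorphism-of-sheaves statement.

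For the second hypothesis, the inclusion $H\subseteq G\cap(H\rtimes L)$ is clear since $H\leq G$. For the reverse inclusion I would use the semidirect decomposition: functorially in $A$, every element of $(H\rtimes L)(A)$ has a unique expression as $h\cdot l$ with $h\in H(A)$, $l\in L(A)$; if such an element lies in $G(A)$, then $l = h^{-1}(hl)\in G(A)\cap L(A)=1$, so it equals $h\in H(A)$. Because this works for every $A$, one obtains the equality of subfunctors $G\cap(H\rtimes L)=H$, and hence also the matching identifications $V|_{G\cap(H\rtimes L)}=V|_{H}$ as $H$-supermodules.

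Feeding these two verifications into Theorem \ref{simpleimprim} yields the corollary. The only genuine subtlety in the argument is the fppf-sheaf character of the epimorphism $G\to S/(H\rtimes L)$: one must be careful to view $LG$, $G\cap(H\rtimes L)$, etc.\ as subfunctors (or sheafifications thereof) rather than collections of $K$-points. However, since the hypothesis $S=LG$ already lives at the sheaf level and $L\cap G=1$ is scheme-theoretic, no further work beyond the functorial book-keeping above is required, and I do not anticipate a real obstacle.
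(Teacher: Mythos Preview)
Your proposal is correct and is exactly the argument the paper has in mind: the corollary is stated without proof precisely because it is the direct specialization of Theorem \ref{simpleimprim} with $S=G\rtimes L$ in the role of the ambient group, $H\rtimes L$ in the role of ``$H$'', and $G$ in the role of ``$L$''. Your verification of the two hypotheses (the sheaf epimorphism via $S=LG\subseteq (H\rtimes L)G$, and $G\cap(H\rtimes L)=H$ via the unique factorization in the semidirect product together with $L\cap G=1$) is the standard one, and the note that these identities hold functorially, not merely on $K$-points, is the only care required.
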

If $char K=p>0$ and $K$ is perfect, then Theorem \ref{simpleimprim} has another interesting corollary. 
\begin{cor}\label{reductiontoeven}
For any $k\geq 0,  r\geq 1$ and any $H$-supermodule $V$ we have 
$$H^k(G/H, V)|_{G_{ev}}\simeq H^k(G_{ev}/(HG_r)_{ev}, (ind^{HG_r}_H V)|_{(HG_r)_{ev}}).$$
\end{cor}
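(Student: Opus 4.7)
The plan is to sandwich $H$ inside the intermediate supersubgroup $HG_r$ (which is well-defined because $G_r\unlhd G$) and then reduce to Theorem~\ref{simpleimprim}. First I would establish the transitivity-of-induction isomorphism
\[
H^k(G/H,\, V) \;\simeq\; H^k\!\left(G/HG_r,\; ind^{HG_r}_H V\right),
\]
which amounts to showing that the inner induction $ind^{HG_r}_H$ is exact, so that the Grothendieck spectral sequence for the composition $ind^G_H = ind^G_{HG_r}\circ ind^{HG_r}_H$ collapses. Exactness follows from the affineness of the sheaf quotient $HG_r/H \simeq G_r/(H\cap G_r)$, which in turn holds because $G_r$ is infinitesimal and hence finite, so the criterion recorded in Section~9 applies.

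Next I would apply Theorem~\ref{simpleimprim} with $L = G_{ev}$ and the pair $(G, H)$ replaced by $(G, HG_r)$. The hypothesis to check is that $G_{ev}\to G/HG_r$ is an epimorphism of sheaves, equivalent to $G = G_{ev}\cdot HG_r$ as sheaves. This follows from the standard equality $G = G_{ev}G_r$ valid for $r\geq 1$ in positive characteristic: the Frobenius morphism annihilates the odd part of $K[G]$ (odd functions in a supercommutative superalgebra are nilpotent, so they vanish after a sufficiently high $p^r$-th power in $K[G]^{(r)}$), so on fppf covers any $g\in G(A)$ factors as a product in $G_{ev}(A)\cdot G_r(A)\subseteq G_{ev}(A)\cdot HG_r(A)$. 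Theorem~\ref{simpleimprim} then produces
\[
H^k(G/HG_r,\; ind^{HG_r}_H V)\big|_{G_{ev}} \;\simeq\; H^k\!\left(G_{ev}/(G_{ev}\cap HG_r),\; (ind^{HG_r}_H V)\big|_{G_{ev}\cap HG_r}\right).
\]

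Finally, I would identify $G_{ev}\cap HG_r$ with $(HG_r)_{ev}$: the intersection of $G_{ev}$ with any closed supersubgroup $K\leq G$ is the largest even supersubgroup of $K$, which is by definition $K_{ev}$. Stringing the isomorphisms together yields the required formula.

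I expect the main obstacle to lie in the exactness of $ind^{HG_r}_H$, as this requires both the affineness of the quotient $HG_r/H$ and the super analog of the classical fact (cf.~\cite{jan}, I.5.13) that induction along an affine quotient is exact. The sheaf identity $G = G_{ev}G_r$ is a standard but nontrivial property of Frobenius kernels of algebraic supergroups in positive characteristic; once both ingredients are secured, the remainder of the argument is formal.
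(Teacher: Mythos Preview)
Your proposal is correct and matches the paper's proof essentially step for step: first collapse the Grothendieck spectral sequence for $ind^G_H = ind^G_{HG_r}\circ ind^{HG_r}_H$ by using that $HG_r/H\simeq G_r/H_r$ is affine (the paper cites \cite{zub3}; you invoke the finiteness of $G_r$ and the criterion in Section~9, which is the same thing), and then apply Theorem~\ref{simpleimprim} with $L=G_{ev}$, the sheaf epimorphism $G_{ev}\to G/HG_r$ coming from the fact that $K[G]^{p^r}\subseteq K[G]_0$ so that $G/G_r$ is purely even. The only cosmetic addition in your write-up is the explicit identification $G_{ev}\cap HG_r=(HG_r)_{ev}$, which the paper leaves implicit.
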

\begin{proof}
Observe that $G/G_r\simeq SSp \ K[G]^{p^r}$ by Theorem 6.1, \cite{zub1}. Since $K[G]^{p^r}=K[G]_0^{p^r}$, $G/G_r=(G/G_r)_{ev}$
and by Proposition 9.3, \cite{maszub}, the induced morphism $G_{ev}\to G/G_r$ is an epimorphism of sheaves.
The same is true for $G_{ev}\to G/HG_r$. Since $HG_r/H\simeq G_r/H_r$ is an affine superscheme (cf. \cite{zub3}),
the functor $ind^{HG_r}_H$ is exact and the standard spectral sequence arguments infer 
$$H^k(G/H, V)\simeq H^k(G/HG_r, ind^{HG_r}_H V).$$ 
Theorem \ref{simpleimprim} concludes the proof. 
\end{proof}
The next lemma is a (super)analog of I.6.11, \cite{jan}.
\begin{lm}\label{restrictiontofactor}
Let $N\unlhd G$ and $N\leq H$. If $V$ is an $H/N$-supermodule, then 
$$H^k(G/H, V)\simeq H^k((G/N)/(H/N), V), k\geq 0 .$$
\end{lm}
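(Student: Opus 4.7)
The plan is to combine an isomorphism of sheaf quotients $G/H\simeq (G/N)/(H/N)$ with the functorial description of $\mathfrak{ind}$ from Lemma~\ref{otherdefofind} to settle the case $k=0$, and then to extend to higher $k$ via universality of $\delta$-functors, as in the proof of Theorem~\ref{simpleimprim}. For the first ingredient, note that since $N\leq H$, the preimage of $H/N\leq G/N$ under $G\to G/N$ is exactly $H$, so the composition $G\to G/N\to (G/N)/(H/N)$ identifies precisely the $H$-cosets in $G$, and the universal property of sheaf quotients produces the desired isomorphism of superschemes over $G$.

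For the case $k=0$, using Lemma~\ref{otherdefofind}, an element $\phi\in\mathfrak{ind}^G_H V(A)$ satisfies $\phi(B)(ng)=n\cdot\phi(B)(g)$ for any $n\in N(B)\leq H(B)$; since $V$ is inflated from an $H/N$-supermodule, $N$ acts trivially on $V$ and hence $\phi(B)(ng)=\phi(B)(g)$. Together with the normality of $N$ in $G$ this forces $\phi$ to factor through $G\to G/N$, and the resulting morphism $G/N\to V_a$ is automatically $(H/N)$-equivariant. The assignment is natural in $A$ and $G$-equivariant, yielding an isomorphism $\mathfrak{ind}^G_H V\simeq\mathfrak{ind}^{G/N}_{H/N} V$, which is the case $k=0$ of the lemma.

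For $k\geq 1$ I would argue via universality of $\delta$-functors. Inflation along $H\to H/N$ is exact, so both $\{H^k(G/H,\mathrm{infl}(-))\}_{k\geq 0}$ and $\{H^k((G/N)/(H/N),-)\}_{k\geq 0}$ are cohomological $\delta$-functors on the category of $H/N$-supermodules that agree at $k=0$ by the previous step. Erasability of the right hand side is standard, via injective resolutions in $H/N$-smod. The hardest step, and the main obstacle, is the erasability of the left hand side: I need to show that an injective $H/N$-supermodule $I$ becomes $ind^G_H$-acyclic after inflation. My intended route is to exploit the superscheme isomorphism from the first paragraph, so that the quasi-coherent sheaf on $G/H$ attached to $\mathrm{infl}(I)$ coincides with the quasi-coherent sheaf on $(G/N)/(H/N)$ attached to $I$, making their higher cohomologies vanish simultaneously. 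Alternatively, one can embed $I$ into a cofree $H/N$-module of the form $K[H/N]\otimes W$ with trivial $W$, and compute $H^k(G/H,\mathrm{infl}(K[H/N]\otimes W))$ via a tensor identity combined with the already-established $k=0$ case.
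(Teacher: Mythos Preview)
Your overall strategy matches the paper's: both reduce to the $\delta$-functor universality argument, so that the lemma follows from the $k=0$ case together with erasability of $\{H^k(G/H,\mathrm{infl}(-))\}_{k\geq 0}$ by injective $H/N$-supermodules.

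The difference lies in how the two steps are executed. For $k=0$, the paper stays on the Hopf-algebra side rather than using the functorial $\mathfrak{ind}$: it writes $ind^G_H V=(V\otimes K[G])^H=((V\otimes K[G])^N)^{H/N}$, observes that $N$ acts trivially on $V$ and that $K[G]$ is injective as an $N$-supermodule (so $K[G]^N=K[G/N]$), and concludes $(V\otimes K[G/N])^{H/N}=ind^{G/N}_{H/N}V$. This is shorter than your descent argument and avoids checking that an $N$-invariant morphism $G\to V_a$ factors through the \emph{sheafification} $G/N$ (which is fine since $V_a$ is a sheaf, but you did not say so). For erasability, the paper simply invokes Jantzen I.6.11, noting that $G/N$ and $H/N$ are affine; your first proposed route, comparing the associated quasi-coherent sheaves under $G/H\simeq(G/N)/(H/N)$, is essentially the content of that reference and in fact proves the full statement for all $k$ at once, making the $\delta$-functor reduction unnecessary. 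Your second route (embedding $I$ into $K[H/N]\otimes W$) is less clear: you would still need to know that $\mathrm{infl}(K[H/N])$ is $ind^G_H$-acyclic, which does not follow from the tensor identity and the $k=0$ case alone.
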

\begin{proof}
The category $H/N-smod$ can be considered as a full subcategory of the category $H-smod$.
The restriction of $\{H^k(G/H, ?)\}_{k\geq 0}$ on this subcategory is a $\delta$-functor. Since $G/N$ and $H/N$ are affine, one can argue as in I.6.11, \cite{jan} to prove that $\{H^k(G/H, ?)\}_{k\geq 0}$ is erasable by injective $H/N$-supermodules.
As above, all we need is to prove that $ind^G_H V\simeq ind^{G/N}_{H/N} V$. Observe that if $W$ is an injective $N$-supermodule, then the spectral sequence in Proposition 3.1 (3), \cite{zubscal}, degenerates, that is $H^k(H/N, W^N)\simeq  
H^k(H, W)$ for any $k\geq 0$. Since $K[G]$ is an injective $N$-supermodule (cf. \cite{zub2}), we have 
$$ind^G_H V\simeq (V\otimes K[G])^H\simeq ((V\otimes K[G])^N)^{H/N}\simeq (V\otimes K[G/N])^{H/N}\simeq ind^{G/N}_{H/N} V.$$
\end{proof}
The proof of the following statement can be copied from \cite{jan}, I.4.10.
\begin{lm}\label{indandfixedpoint}
Let $H$ be a supersubgroup of an algebraic supergroup $G$. For any $H$-supermodule $V$ and any $k\geq 0$ we have a superspace isomorphism
$H^k(G/H, V)\simeq H^k(H, V\otimes K[G])$. 
\end{lm}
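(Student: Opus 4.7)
The plan is to follow Jantzen's proof of Part I, Lemma 4.10 in the super setting. First, by the paragraph following Lemma \ref{otherdefofind} (the Jantzen-style alternative definition $\mathrm{ind}^G_H V = (V\otimes K[G])^H$), induction factors as $\mathrm{ind}^G_H = (-)^H \circ F$, where $F: H\text{-smod} \to H\text{-smod}$ is the functor $F(W) = W\otimes K[G]$ endowed with the diagonal $H$-action (using the given left structure on $W$ and the right-translation action $\rho_r|_H$ on $K[G]$). Since tensoring over the field $K$ is exact in $\mathsf{SMod}_K$, $F$ is exact.

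The key technical step is to show that $F$ sends injective $H$-supermodules to $H$-acyclic objects. I would do this via the \emph{Hopf algebra twist}: construct an $H$-equivariant isomorphism
$$W\otimes K[G] \;\xrightarrow{\sim}\; W^{\mathrm{tr}}\otimes K[G]$$
for any $H$-supermodule $W$, where on the right $W^{\mathrm{tr}}$ carries trivial $H$-action and $K[G]$ retains the right-translation action. In the classical case this is Jantzen's standard construction: one first builds the isomorphism with $K[H]$ in place of $K[G]$ via $w\otimes h \mapsto \sum w_{(0)}\otimes w_{(1)}h$, then extends to $K[G]$ using that $K[G]$ is cofree as a left $K[H]$-supercomodule, i.e., $K[G]\simeq K[H]\otimes X$ for some superspace $X$ (a result of \cite{zub2} already invoked in Lemma \ref{restrictiontofactor}). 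In the super setting the same formula works once the signs from the braiding $t$ are inserted. The right side $W^{\mathrm{tr}}\otimes K[G]$ is then a direct sum (indexed by a homogeneous basis of $W$) of possibly parity-shifted copies of $K[G]$, and since $K[G]$ is $H$-injective and $K[H]$-supercomodules form a Grothendieck category, this direct sum is itself $H$-injective, hence $H$-acyclic.

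With $F$ exact and sending injectives to $H$-acyclics, the Grothendieck spectral sequence for $(-)^H\circ F$ collapses and yields
$$H^k(G/H, V) = R^k\bigl((-)^H\circ F\bigr)(V) \;\simeq\; R^k(-)^H\bigl(F(V)\bigr) = H^k(H, V\otimes K[G]),$$
which is the claim. Equivalently, an $H$-injective resolution $V\to I^{\bullet}$ gives rise to an $H$-injective resolution $F(V) \to F(I^{\bullet})$ of $F(V)$ (since $F$ is exact and preserves injectives), so $R^k\mathrm{ind}^G_H V = H^k\bigl(F(I^{\bullet})^H\bigr) = H^k(H, V\otimes K[G])$.

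The main obstacle is verifying the super twist, which amounts to a careful Hopf-superalgebra manipulation with signs from the symmetry $t$ of $\mathsf{SMod}_K$. There is no new conceptual difficulty beyond the classical argument; all remaining steps are formal homological algebra.
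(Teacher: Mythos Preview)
Your proposal is correct and is precisely the approach the paper intends: the paper's entire proof is the sentence ``The proof of the following statement can be copied from \cite{jan}, I.4.10,'' and what you have written is exactly a careful super-version of Jantzen's argument (factor $ind^G_H$ as $(-)^H\circ(-\otimes K[G])$, use the twist together with injectivity of $K[G]$ as an $H$-supermodule to see that the inner functor is exact and preserves injectives, then conclude). Your invocation of the cofreeness $K[G]\simeq K[H]\otimes X$ from \cite{zub2} to extend the twist from $K[H]$ to $K[G]$ is the right way to handle the fact that the comodule map of an $H$-supermodule lands in $K[H]$ rather than $K[G]$.
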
 

\section{Representations of minimal parabolic supersubgroups}

Let $a$ be an integer. For the sake of convenience we say that $0|a$ if $a=0$ only. Respectively, $0\nmid a$ means $a\neq 0$.
From now on $char K=p$ and it is possible that $p=0$.

For any $\lambda\in X(T)$ let $K_{\lambda}^{\epsilon}$ denote the one-dimensional $B^{\pm}$-supermodule of weight $\lambda$ and of parity $\epsilon$. Sometimes we will denote a $G$-supermodules $H^k(G/B_w^{\pm}, K^{\epsilon}_{\lambda})$ by $H^k_{\pm , w}(\lambda^{\epsilon}), k\geq 0$. The simple socle of $H^0_{\pm , w}(\lambda^{\epsilon})$ is denoted by $L_{\pm, w}(\lambda^{\epsilon})$
(see \cite{brunkuj}, Lemma 4.1). Observe that $\Pi H^k_{\pm , w}(\lambda^0)=H^k_{\pm , w}(\lambda^1)$ and
$\Pi L_{\pm, w}(\lambda^0)=L_{\pm, w}(\lambda^1)$.

Let $G=GL(1|1)$. Then $\Phi^+_{id}=-\Phi^+_{(12)}=\{\alpha_1=\epsilon_1-\epsilon_2\}$ and $(\Phi_{id})_0=\emptyset$.
In other words, $G$ has only two Borel supersubgroups $B_{id}^+ =B_{(12)}^-$ and $B_{id}^- =B_{(12)}^+$.
Thus $<_1$ is opposite to $<_{(12)}$.
In what follows we omit a subindex $w\in S_2$. 

Observe that the multiplication morphism induces isomorphisms of superschemes $U_{12}\times B^- \simeq G$ and $U_{21}\times B^+ \simeq G$.
 
By Lemma 6.1, \cite{zubmar}, a $G$-supermodule
$H_-^0(\lambda^{\epsilon})$ is two-dimensional. 
More precisely, $H^0_-(\lambda^{\epsilon})_{\mu}\neq 0$ if $\mu\in\{\lambda, \lambda-\alpha\}$ only.
The one-dimensional supersubspaces $H^0_-(\lambda^{\epsilon})_{\lambda}$ and $H^0_-(\lambda^{\epsilon})_{\lambda-\alpha}$ have parities $\epsilon$ and $\epsilon+1$ respectively.  
The $G$-supermodule $H^0_-(\lambda^{\epsilon})$ is simple if and only if $p\nmid |\lambda|$, otherwise
$H^0_-(\lambda^{\epsilon})$ has a composition series
$$\begin{array}{c}
L_-((\lambda-\alpha)^{\epsilon+1}) \\
| \\
L_-(\lambda^{\epsilon})
\end{array}.
$$
Symmetrically, $H^0_+(\lambda^{\epsilon})_{\mu}\neq 0$ if $\mu\in\{\lambda, \lambda+\alpha\}$ only.
The one-dimensional supersubspaces $H^0_+(\lambda^{\epsilon})_{\lambda}$ and $H^0_+(\lambda^{\epsilon})_{\lambda+\alpha}$ have parities $\epsilon$ and $\epsilon+1$ respectively.  
The $G$-supermodule $H^0_+(\lambda^{\epsilon})$ is simple if and only if $p\nmid |\lambda|$. If it is the case, then
$H^0_+(\lambda^{\epsilon})\simeq H^0_-((\lambda+\alpha)^{\epsilon+1})$. Otherwise
$H^0_+(\lambda^{\epsilon})$ has a composition series
$$\begin{array}{c}
L_+((\lambda+\alpha)^{\epsilon+1})=L_-((\lambda+\alpha)^{\epsilon+1}) \\
| \\
L_+(\lambda^{\epsilon})=L_-(\lambda^{\epsilon})
\end{array}.
$$
Now, let $G=GL(m|n)$. For any subset $S\subseteq\Pi_w$ 
one can define a parabolic supersubgroup $P_w(S)$. More precisely, if $S=\{\alpha_{i_1},\ldots , \alpha_{i_r}|
1\leq i_1 <\ldots < i_r <m+n\}$, then $P_w(S)$ is equal to the stabilizer of the flag
$$W_1\subseteq\ldots\subseteq W_{m+n-i_r-1}
\subseteq W_{m+n-i_r+1}\subseteq\ldots$$
$$\subseteq W_{m+n-i_1 -1}\subseteq W_{m+n-i_1+1}\subseteq\ldots\subseteq W_{m+n}.$$
For example, if $S=\{\alpha_i\}$, then $P_w(\alpha_i)$ coincides with the stabalizer of the flag
$$W_1\subseteq\ldots\subseteq W_{m+n-i-1}\subseteq W_{m+n-i+1}\subseteq\ldots\subseteq W_{m+n}.$$ 

Denote a supersubspace $Kv_{wi}+Kv_{w(i+1)}$ by $S_i$. Define a supersubgroup $H_{i,w}$ of $G$ such that  
for all $A\in\mathsf{SAlg}_K$ :
$$H_{i, w}(A)=\{g\in G(A) | g(S_i\otimes 1)\subseteq S_i\otimes A, g(K v_j\otimes 1)\subseteq K v_j\otimes A, j\neq wi, w(i+1)\}.$$
It is clear that $H_{i, w}\simeq GL(2)\times T'$ whenever $\alpha_i$ is even, otherwise $H_{i, w}\simeq GL(1|1)\times T'$, where
$T'(A)=\{t\in T(A)| t|_{S_i\otimes 1}=id_{S_i\otimes 1}\}$.
  
Let $UP_w(\alpha_i)$ be a largest supersubgroup of $U_w^-$ whose elements act trivially on $W_{m+n-i+1}/W_{m+n-i-1}$.
The following lemma is obvious.
\begin{lm}\label{Levidecomp}
We have $P_w(\alpha_i)=UP_w(\alpha_i)\rtimes H_{i, w}$.  
\end{lm}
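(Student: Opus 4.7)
My plan is to identify all three supergroups as explicit matrix supergroups and then verify the decomposition by elementary block matrix manipulations. Reorder the basis of $V$ as $v_{w(m+n)}, v_{w(m+n-1)}, \ldots, v_{w1}$, so that each $W_j$ is spanned by the first $j$ vectors and $B_w^-$ becomes the supergroup of upper triangular supermatrices. In this basis the vectors $v_{w(i+1)}$ and $v_{wi}$ spanning $S_i$ occupy consecutive positions $m+n-i$ and $m+n-i+1$, so the quotient $W_{m+n-i+1}/W_{m+n-i-1}$ corresponds to the $2\times 2$ diagonal block at those positions. Consequently, $P_w(\alpha_i)$ becomes the supergroup of block upper triangular supermatrices with block partition $(1,\ldots,1,2,1,\ldots,1)$ (the $2$-block in the center); $H_{i,w}$ becomes the block diagonal subgroup with $GL(2)$ or $GL(1|1)$ (according to the parity of $\alpha_i$) sitting in the central block and $T'$ filling the remaining one-dimensional diagonal entries; and $UP_w(\alpha_i)$ becomes the subgroup of $U_w^-$ (block upper unipotent supermatrices) whose central $2$-block equals the identity.

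From this description, the inclusions $H_{i,w}, UP_w(\alpha_i) \leq P_w(\alpha_i)$ are transparent; the intersection $H_{i,w}\cap UP_w(\alpha_i)$ is trivial, since a block diagonal supermatrix with identity diagonal blocks equals $E$; and for $h\in H_{i,w}$ the conjugation $h\,UP_w(\alpha_i)\,h^{-1}$ remains block upper unipotent with identity central block, giving normality of $UP_w(\alpha_i)$. Finally, for any $A\in\mathsf{SAlg}_K$ and $g\in P_w(\alpha_i)(A)$ one writes $g=uh$, where $h$ is the block diagonal part of $g$ (landing in $H_{i,w}(A)$) and $u=gh^{-1}$ is block upper unipotent with identity central block (landing in $UP_w(\alpha_i)(A)$). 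The assignment $g\mapsto (u,h)$ is polynomial in the entries of $g$, hence functorial, and yields the desired isomorphism $P_w(\alpha_i)\simeq UP_w(\alpha_i)\rtimes H_{i,w}$.

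The only point requiring care is the super structure of the central $2$-block: when $\alpha_i$ is odd, the off-diagonal entries of that block lie in the odd part of $A$ and the block is $GL(1|1)$, while the ``removed'' root subgroup passing from $U_w^-$ to $UP_w(\alpha_i)$ is the odd one-dimensional $U_{-\alpha_i}$. However, block matrix multiplication and the factorization $g=uh$ respect the $\mathbb{Z}_2$-grading, so the super setting presents no genuine obstacle, which is presumably why the author marks the lemma as obvious.
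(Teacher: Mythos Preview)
The paper offers no proof of this lemma, declaring it obvious. Your block-matrix verification is correct and is exactly the routine argument the author has in mind: after reordering the basis so that the flag $W_{\bullet}$ becomes the standard one, $P_w(\alpha_i)$, $H_{i,w}$ and $UP_w(\alpha_i)$ are visibly the block upper triangular, block diagonal, and block upper unipotent supergroups for the partition $(1,\ldots,1,2,1,\ldots,1)$, and the factorisation $g=(gh^{-1})h$ with $h$ the block diagonal part of $g$ gives the semidirect product.
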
 
\begin{rem}\label{noticeaboutunip}
One can prove that $UP_w(\alpha_i)$ is the unipotent radical of $P_w(\alpha_i)$, that is the largest 
connected, normal and unipotent supersubgroup of $P_w(\alpha_i)$ (see \cite{amas, zub3}). 
\end{rem}
We say that two Borel supergroups $B^-_w$ and $B^-_{w'}$ are {\it adjuacent} via $\alpha_i\in\Pi_w$ if  $\Phi^+_{w'}=\Phi_w^+\setminus\{\alpha_i\}\bigcup\{-\alpha_i\}$. If $\alpha_i$ is odd, then we say that
$B^-_w$ and $B^-_{w'}$ are {\it odd adjuacent}, otherwise they are {\it even adjuacent}.
It is clear that $UP_w(\alpha_i)\leq B^-_w, B^-_{w'}\leq P_w(\alpha)$. Moreover, as in Lemma \ref{Levidecomp} we have
$$B^-_w=UP_w(\alpha_i)\rtimes (B^-\times T') , \ B^-_{w'}=UP_w(\alpha_i)\rtimes (B^+\times T').
$$
Here $B^-$ and $B^+$ are the corresponding Borel subgroups of $GL(1|1)$ or of $GL(2)$, with respect to the parity of $\alpha_i$.

In what follows we omit the subindexes $w, w'$ and $i$.
For example, $B_w^-$ and $B_{w'}^-$ are denoted just by $B$ and $B'$ correspondingly, $P_w(\alpha_i)$ by $P(\alpha)$ etc. 
\begin{pr}\label{indover} If $\alpha\in \Pi_1$, then :  
\begin{enumerate} 
\item For any $B$-supermodule $M$ we have $H^k (P(\alpha)/B, M)=0$, provided $k > 0$.
\item For all $k\geq 0$ we have $H^k (G/B, M)\simeq H^k (G/P(\alpha), \ ind^{P(\alpha)}_B M)$.
\end{enumerate}
\end{pr}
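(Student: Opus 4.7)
My plan is a two-step reduction. First, I reduce the minimal-parabolic situation to induction from a Borel of $GL(1|1)$ up to $GL(1|1)$ itself; then I use transitivity of induction to pass from $P(\alpha)$ to $G$.

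For (1), Lemma~\ref{Levidecomp} gives $P(\alpha) = UP(\alpha) \rtimes (GL(1|1) \times T')$ and parallel to it $B = UP(\alpha) \rtimes (B^- \times T')$, where $B^-$ is the Borel of the $GL(1|1)$-factor whose orientation matches that of $B$. Because $\alpha$ is odd, the root subgroup $U_\alpha \simeq \mathsf{A}^{0|1}$ sits inside the $GL(1|1)$-factor, and the isomorphism of affine superschemes $B^- \times U_\alpha \simeq GL(1|1)$ (a variant of the one recorded just before Lemma~6.1 of \cite{zubmar}) combines with the Levi decomposition to yield a left-$B$-equivariant isomorphism of affine superschemes
\[
\phi : B \times U_\alpha \stackrel{\sim}{\to} P(\alpha), \qquad (b, u) \mapsto bu.
\]
By Lemma~\ref{indisom}, this identifies $ind^{P(\alpha)}_B M$ with $M \otimes K[U_\alpha]$, naturally in $M$. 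Since $K[U_\alpha]$ is a free $K$-supermodule of finite superdimension, the functor $M \mapsto M \otimes K[U_\alpha]$ is exact on $B$-supermodules, so $ind^{P(\alpha)}_B$ is exact and $H^k(P(\alpha)/B, M) = R^k ind^{P(\alpha)}_B(M) = 0$ for all $k \geq 1$.

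For (2), I invoke the transitivity of induction $ind^G_B \simeq ind^G_{P(\alpha)} \circ ind^{P(\alpha)}_B$, which follows from associativity of the cotensor product. Since $ind^{P(\alpha)}_B$ is exact by (1) and is right adjoint (via Frobenius reciprocity) to the obviously exact restriction functor $P(\alpha)-smod \to B-smod$, it preserves injectives. Hence any injective resolution $0 \to M \to I^\bullet$ in $B-smod$ becomes, after applying $ind^{P(\alpha)}_B$, an injective resolution of $ind^{P(\alpha)}_B M$ in $P(\alpha)-smod$; computing cohomology on both sides (or equivalently, observing that the Grothendieck composite-functor spectral sequence degenerates at $E_2$ because $R^{>0} ind^{P(\alpha)}_B = 0$) yields the desired isomorphism $H^k(G/B, M) \simeq H^k(G/P(\alpha), ind^{P(\alpha)}_B M)$.

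The main obstacle is confirming that $\phi$ is genuinely an isomorphism of affine superschemes (not merely a bijection on $A$-points for a restricted class of $A$) together with its $B$-equivariance. This amounts to a super-Gauss-elimination inside $GL(1|1)$: every matrix factors uniquely as lower-triangular times upper-triangular unipotent, and the bottom-right entry of the resulting product must remain a unit in the coordinate superalgebra of $B^- \times U_\alpha$. The oddness of $\alpha$ is essential here; for even $\alpha$ the analogous factorization inside $GL(2)$ only exhausts the open Bruhat cell of the minimal parabolic, so part~(1) would fail in its present form and would have to be replaced by a cohomology computation on the projective line.
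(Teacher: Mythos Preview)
Your argument is correct and follows essentially the same route as the paper: the multiplication map $B\times U_\alpha\to P(\alpha)$ (equivalently $U_\alpha\times B\to P(\alpha)$) is an isomorphism of affine superschemes because $\alpha$ is odd, so $P(\alpha)/B$ is affine and $ind^{P(\alpha)}_B$ is exact, after which the Grothendieck spectral sequence for the composite $ind^G_B\simeq ind^G_{P(\alpha)}\circ ind^{P(\alpha)}_B$ degenerates. The paper states this in two lines, citing \cite{zub1,zub2} for the exactness-from-affineness step, whereas you unpack the factorization via the Levi decomposition and Lemma~\ref{indisom}; your closing remark on why the even case fails is also on point.
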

\begin{proof}
Observe that the multiplication map $U_{\alpha}\times B\to P(\alpha)$ induces an isomorphism of superschemes. Thus
$P(\alpha)/B$ is an affine superscheme and the functor  $ind^{P(\alpha)}_B$ is exact (cf. \cite{zub1, zub2}). In particular, $H^k(P(\alpha)/B, M)=0$ for all $k>0$. The second statement follows by the standard spectral sequence arguments.
\end{proof}
\begin{rem}\label{remaboutadj}
All statements of the above proposition hold for $B'$ also.
\end{rem}

Denote the socle of $ind^{P(\alpha)}_B K_{\lambda}^{\epsilon}$ by $L_P(\lambda^{\epsilon})$.
\begin{pr}\label{reprofaddroot} Let $\alpha\in\Pi_1$. 
\begin{enumerate}
\item $UP(\alpha)$ acts trivially on $ind^{P(\alpha)}_B K_{\lambda}^{\epsilon}$.
\item If $p\nmid (\lambda, \alpha)$, then $L_P(\lambda^{\epsilon})=ind^{P(\alpha)}_B K_{\lambda}^{\epsilon}\simeq ind^{P(\alpha)}_{B'} K_{\lambda+\alpha}^{\epsilon+1}$ is a simple $P(\alpha)$-supermodule of highest weight $\lambda$ (with respect to $<_w$).
\item If $p |(\lambda, \alpha)$, then $ind^{P(\alpha)}_B K_{\lambda}^{\epsilon}$ has a composition series
$$\begin{array}{c}
L_P((\lambda-\alpha)^{\epsilon+1}) \\
| \\
L_P(\lambda^{\epsilon})
\end{array}.
$$
\item  If $p |(\lambda, \alpha)$, then $ind^{P(\alpha)}_{B'} K_{\lambda}^{\epsilon}$ has a composition series
$$\begin{array}{c}
L_P((\lambda+\alpha)^{\epsilon+1}) \\
| \\
L_P(\lambda^{\epsilon})
\end{array}.
$$
\end{enumerate}
\end{pr}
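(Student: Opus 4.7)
The plan is to reduce the proposition to the $GL(1|1)$ calculations already recorded earlier in this section. By Lemma~\ref{Levidecomp}, since $\alpha \in \Pi_1$ is odd, $P(\alpha) = UP(\alpha) \rtimes H_{i,w}$ with $H_{i,w} \simeq GL(1|1) \times T'$; and from the discussion just before the proposition, $B = UP(\alpha) \rtimes (B^- \times T')$ and $B' = UP(\alpha) \rtimes (B^+ \times T')$, where $B^\pm$ denote the two Borels of the $GL(1|1)$ factor. The supersubgroup $UP(\alpha)$ is normal in $P(\alpha)$, contained in both $B$ and $B'$, and acts trivially on the one-dimensional weight supermodule $K_\lambda^\epsilon$. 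Lemma~\ref{restrictiontofactor} applied with $N = UP(\alpha)$ therefore gives
\[
ind^{P(\alpha)}_B K_\lambda^\epsilon \simeq ind^{H_{i,w}}_{B^- \times T'} K_\lambda^\epsilon, \qquad ind^{P(\alpha)}_{B'} K_\lambda^\epsilon \simeq ind^{H_{i,w}}_{B^+ \times T'} K_\lambda^\epsilon.
\]
Statement (1) follows at once, since both right-hand sides are inflations of $H_{i,w}$-supermodules along the projection $P(\alpha) \twoheadrightarrow H_{i,w}$.

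Next I would split off the $T'$-factor. Writing $\lambda_1, \lambda_2$ for the restrictions of $\lambda$ to the tori of the $GL(1|1)$ and $T'$ factors, so that $K_\lambda^\epsilon \simeq K_{\lambda_1}^\epsilon \boxtimes K_{\lambda_2}$ as a $(B^\pm \times T')$-supermodule, the Künneth-type compatibility of induction with direct products (which in the present setting is extracted directly from Lemma~\ref{indisom} applied to $H_{i,w} = GL(1|1) \times T'$) yields
\[
ind^{H_{i,w}}_{B^\pm \times T'} K_\lambda^\epsilon \simeq \bigl(ind^{GL(1|1)}_{B^\pm} K_{\lambda_1}^\epsilon\bigr) \boxtimes K_{\lambda_2}.
\]
A short computation with the bilinear form gives $(\lambda, \alpha) = (-1)^{|v_{wi}|}(\lambda_{wi} + \lambda_{w(i+1)}) = \pm |\lambda_1|$, so the hypotheses $p \nmid (\lambda, \alpha)$ and $p \mid (\lambda, \alpha)$ translate exactly to the conditions on $|\lambda_1|$ that govern the structure of the $GL(1|1)$-modules $H^0_\pm(\lambda_1^\epsilon)$.

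Parts (2)--(4) are then pure translation. In the simple case (part (2)), the $GL(1|1)$ isomorphism $H^0_+(\mu^\epsilon) \simeq H^0_-((\mu + \alpha)^{\epsilon+1})$ applied to the appropriate $\mu$ yields the asserted identification between $ind^{P(\alpha)}_B K_\lambda^\epsilon$ and $ind^{P(\alpha)}_{B'} K_{\lambda + \alpha}^{\epsilon+1}$ after tensoring with $K_{\lambda_2}$, and the highest weight $\lambda$ (with respect to $<_w$) is read off from the $GL(1|1)$ picture. In the non-simple case (parts (3)--(4)), the two-term composition series of $H^0_\pm(\lambda_1^\epsilon)$ lifts to the composition series asserted; to identify the composition factors with $L_P(\cdot)$ note that any simple $P(\alpha)$-supermodule on which $UP(\alpha)$ acts trivially is pulled back from a simple $H_{i,w}$-supermodule, and the simples of $GL(1|1) \times T'$ are external tensor products of simples of $GL(1|1)$ with characters of $T'$, hence determined up to isomorphism by their highest weight and parity.

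The one non-routine point is the external tensor product formula for induction over $H_{i,w} = GL(1|1) \times T'$ displayed above; this is the main obstacle in the proof, and it is proved directly from Lemma~\ref{indisom} using the superscheme decomposition $GL(1|1) \simeq B^\pm \times (GL(1|1)/B^\pm)$ which, after taking the product with $T'$, exhibits $H_{i,w}$ as $(B^\pm \times T') \times (GL(1|1)/B^\pm)$. Once this compatibility is in hand, the remainder of the argument is bookkeeping with the already-known $GL(1|1)$ facts.
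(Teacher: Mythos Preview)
Your proposal is correct and follows essentially the same route as the paper: reduce via Lemma~\ref{restrictiontofactor} with $N = UP(\alpha)$ to induction over the Levi $H_{i,w} \simeq GL(1|1)\times T'$, and then transport the $GL(1|1)$ computations already recorded. For the step you single out as the main obstacle (splitting off $T'$), the paper bypasses the K\"unneth argument and instead applies Corollary~\ref{semidirect} with $G = GL(1|1)$, $L = T'$, $H = B^{\pm}$ to obtain $ind^{H_{i,w}}_{B^{-}\times T'} K_\lambda^\epsilon|_{GL(1|1)} \simeq H^0_{-}(\lambda^\epsilon)$ directly, and then reads off the full weight and parity structure from the identification $ind^{P(\alpha)}_B K_\lambda^\epsilon \simeq K_\lambda^\epsilon \otimes K[U_\alpha]$ furnished by Lemmas~\ref{indisom} and~\ref{otherdefofind}.
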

\begin{proof}
The first statement follows by Lemma \ref{restrictiontofactor}. Moreover, 
we have an isomorphism $ind^{P(\alpha)}_B K_{\lambda}^{\epsilon}\simeq
ind^{GL(1|1)\times T'}_{B^-\times T'} K_{\lambda}^{\epsilon}.$ The last $GL(1|1)$-supermodule is isomorphic to
$H^0_-(\lambda^{\epsilon})$ by Corollary \ref{semidirect}. On the other hand, Lemma \ref{indisom} infers $ind^{P(\alpha)}_B K_{\lambda}^{\epsilon}\simeq K^{\epsilon}_{\lambda}\otimes K[U_{\alpha}]$. Using Lemma \ref{otherdefofind} one sees that
$T$ acts on $K[U_{\alpha}]$ by $\nu_l$. In particular, $(ind^{P(\alpha)}_B K_{\lambda}^{\epsilon})_{\mu}\neq 0$
if and only if $\mu\in\{\lambda, \lambda-\alpha\}$. Besides, the corresponding weight supersubspaces are one-dimensional and have parities $\epsilon$ and $\epsilon+1$ respectively. Thus follows the second and third statements. The case of $B'$ is symmetric. 
\end{proof}

\section{The Borel-Bott-Weil theorem}
We still suppose that $B=B^-_w$ for a fixed element $w\in S_{m+n}$. 
\begin{pr}\label{adjuacentfortypical}
Assume that $B$ and $B'$ are adjacent via $\alpha\in \Pi_1$ and $p\nmid (\lambda, \alpha)$. Then 
$H^k (G/B, K^{\epsilon}_{\lambda})\simeq  H^k (G/B' , K^{\epsilon+1}_{\lambda-\alpha})$ for all $k\geq 0$. 
\end{pr}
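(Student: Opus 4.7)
The plan is to pass through the minimal parabolic $P := P(\alpha)$ and reduce the statement to the $GL(1|1)$-computation recorded in Section 11. Since both $B$ and $B'$ are contained in $P$, Proposition \ref{indover}(2) together with Remark \ref{remaboutadj} gives
$$H^k(G/B, K^{\epsilon}_{\lambda}) \simeq H^k(G/P,\ ind^{P}_B K^{\epsilon}_{\lambda})$$
and
$$H^k(G/B', K^{\epsilon+1}_{\lambda-\alpha}) \simeq H^k(G/P,\ ind^{P}_{B'} K^{\epsilon+1}_{\lambda-\alpha}).$$
Thus the theorem reduces to producing an isomorphism of $P$-supermodules
$$ind^{P}_B K^{\epsilon}_{\lambda} \simeq ind^{P}_{B'} K^{\epsilon+1}_{\lambda-\alpha}.$$

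To prove this last isomorphism I would descend to the Levi $H_{i,w} \simeq GL(1|1) \times T'$. By Proposition \ref{reprofaddroot}(1), $UP(\alpha)$ acts trivially on both $P$-supermodules, so by Lemma \ref{restrictiontofactor} they may be regarded as $H_{i,w}$-supermodules. The semidirect decompositions $B = UP(\alpha) \rtimes (B^- \times T')$ and $B' = UP(\alpha) \rtimes (B^+ \times T')$ combined with Corollary \ref{semidirect} identify the $GL(1|1)$-factors of the two sides with $H^0_-(\mu^{\epsilon})$ and $H^0_+((\mu-\alpha_1)^{\epsilon+1})$ respectively, where $\mu$ is the restriction of $\lambda$ to the $GL(1|1)$-torus and $\alpha_1$ is the local simple root of $GL(1|1)$ corresponding to $\alpha$. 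The $T'$-weights on both sides agree because $\alpha$ vanishes on $T'$. Applying the $GL(1|1)$-isomorphism $H^0_+(\nu^{\epsilon'}) \simeq H^0_-((\nu+\alpha_1)^{\epsilon'+1})$ recalled in Section 11 with $\nu = \mu-\alpha_1$ and $\epsilon' = \epsilon+1$ produces the desired identification; the hypothesis $p \nmid |\nu|$ needed there is $p \nmid |\mu-\alpha_1| = |\mu|$, which is precisely $p \nmid (\lambda,\alpha)$ since for an odd root $\alpha = \epsilon_i - \epsilon_j$ one has $(\lambda,\alpha) = \lambda_i + \lambda_j = |\mu|$.

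The main obstacle I expect is the careful bookkeeping in the second step: one must verify that $ind^P_B K^{\epsilon}_{\lambda}$ really corresponds to $H^0_-$ on the $GL(1|1)$-part (and symmetrically $ind^P_{B'} K^{\epsilon+1}_{\lambda-\alpha}$ to $H^0_+$), and that the simultaneous weight shift $\lambda \mapsto \lambda - \alpha$ and parity flip $\epsilon \mapsto \epsilon+1$ is absorbed exactly by the $GL(1|1)$-level isomorphism. Once these identifications are pinned down, the statement becomes a direct consequence of Proposition \ref{indover}(2) and the computation of $H^0_{\pm}$ over $GL(1|1)$.
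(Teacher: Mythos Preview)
Your proposal is correct and follows essentially the same route as the paper: the paper's one-line proof simply cites Proposition~\ref{indover}(2) and Proposition~\ref{reprofaddroot}(2), and your argument is precisely the unpacking of those two references, including the reduction to $GL(1|1)$ via the Levi that underlies Proposition~\ref{reprofaddroot}. The bookkeeping you flag as a potential obstacle is exactly what Proposition~\ref{reprofaddroot}(2) already records, so once you invoke it there is nothing further to check.
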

\begin{proof}
Combine Proposition \ref{indover} (2) with Proposition \ref{reprofaddroot} (2).
\end{proof}
\begin{pr}\label{evenroot}
Let $\alpha\in \Pi_0 , \lambda\in X(T)$. The following statements hold for any $k\geq 0$. 
\begin{enumerate}
\item The unipotent radical $UP(\alpha)$ acts trivially on $H^k (P(\alpha)/B,  K_{\lambda})$.
\item If $(\lambda, \alpha^{\vee})=-1$, then $H^k (G/B,  K_{\lambda}^{\epsilon}) =0$.
\item If $(\lambda, \alpha^{\vee})\geq 0$, then $H^k (G/B, K_{\lambda}^{\epsilon})\simeq H^k (G/P(\alpha) , ind^{P(\alpha)}_B K_{\lambda}^{\epsilon})$.
\item If $(\lambda, \alpha^{\vee})\leq -2$, then $H^k (G/B , K_{\lambda}^{\epsilon})\simeq H^{k-1} (G/P(\alpha) , H^1 (P(\alpha)/B , K_{\lambda}^{\epsilon}))$.
\item Suppose that $(\lambda, \alpha^{\vee})\geq 0$. If $char K=0$ or $char K=p > 0$ and $(\lambda, \alpha^{\vee})=sp^m -1, s, m\in\mathbb{N}, 0<s < p$, then $H^k (G/B,  K_{\lambda}^{\epsilon})\simeq H^{k+1} (G/B, K_{s_{\alpha}.\lambda}^{\epsilon})$ 
\end{enumerate}
\end{pr}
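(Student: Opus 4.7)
The plan is to reduce every assertion to rank-one cohomology on the fibre $P(\alpha)/B$ via the Grothendieck (composition-of-induction) spectral sequence
$$E_2^{i,j} = H^i(G/P(\alpha),\, H^j(P(\alpha)/B,\, K_\lambda^\epsilon)) \;\Longrightarrow\; H^{i+j}(G/B,\, K_\lambda^\epsilon)$$
coming from the factorisation $ind^G_B \simeq ind^G_{P(\alpha)} \circ ind^{P(\alpha)}_B$.

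For (1): since $\alpha \in \Pi_0$, Lemma \ref{Levidecomp} writes $P(\alpha) = UP(\alpha) \rtimes (GL(2) \times T')$, and the same decomposition gives $B = UP(\alpha) \rtimes (B^- \times T')$ with $UP(\alpha)$ normal in $P(\alpha)$ and contained in $B$. The one-dimensional module $K_\lambda^\epsilon$ is inflated from $B/UP(\alpha) \simeq B^- \times T'$. Applying Lemma \ref{restrictiontofactor} with $N = UP(\alpha)$ and then Corollary \ref{semidirect} with $L = T'$ yields
$$H^k(P(\alpha)/B,\, K_\lambda^\epsilon)|_{GL(2)} \;\simeq\; H^k(GL(2)/B^-,\, K_\lambda^\epsilon|_{B^-}),$$
which is inflated from $P(\alpha)/UP(\alpha) \simeq GL(2) \times T'$; in particular $UP(\alpha)$ acts trivially. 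For (2)--(4) I would then feed the classical $GL(2)$-cohomology of line bundles on $\mathbb{P}^1 \simeq GL(2)/B^-$ into the above spectral sequence. The rank-one input is: $H^0 \neq 0$ with $H^1 = 0$ when $(\lambda, \alpha^\vee) \geq 0$; $H^0 = 0$ with $H^1 \neq 0$ when $(\lambda, \alpha^\vee) \leq -2$; both vanish when $(\lambda, \alpha^\vee) = -1$. In each case only a single row of the $E_2$-page survives, so the spectral sequence degenerates and delivers (3), (4) and (2) respectively.

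For (5): since $(\lambda, \alpha^\vee) \geq 0$ forces $(s_\alpha . \lambda, \alpha^\vee) = -(\lambda, \alpha^\vee) - 2 \leq -2$, I would chain
\begin{align*}
H^k(G/B, K_\lambda^\epsilon)
 &\;\overset{(3)}{\simeq}\; H^k(G/P(\alpha),\, ind^{P(\alpha)}_B K_\lambda^\epsilon) \\
 &\;\overset{(\ast)}{\simeq}\; H^k(G/P(\alpha),\, H^1(P(\alpha)/B,\, K_{s_\alpha . \lambda}^\epsilon)) \\
 &\;\overset{(4)}{\simeq}\; H^{k+1}(G/B,\, K_{s_\alpha . \lambda}^\epsilon),
\end{align*}
with $(\ast)$ the rank-one Bott identification $ind^{GL(2)}_{B^-} K_\lambda \simeq H^1(GL(2)/B^-, K_{s_\alpha . \lambda})$ of $GL(2) \times T'$-modules. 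In characteristic zero this is Serre duality on $\mathbb{P}^1$; in characteristic $p$ the hypothesis $(\lambda, \alpha^\vee) = sp^m - 1$ with $0 < s < p$ is the standard criterion guaranteeing that this $GL(2)$-isomorphism persists.

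The main obstacle I anticipate is verifying $(\ast)$ as an isomorphism of $P(\alpha)$-supermodules, not merely of $GL(2)$-modules or bare vector spaces. This requires tracking the reduction of (1) \emph{equivariantly} for the full Levi $GL(2) \times T'$ and taking care of the parity shift $\epsilon$ (which is preserved throughout since $\alpha$ is even); once this bookkeeping is in place the assembly of the rank-one facts into the $G/P(\alpha)$-spectral sequence is routine.
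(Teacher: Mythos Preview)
Your proposal is correct and follows essentially the same route as the paper: reduce the fibre cohomology $H^k(P(\alpha)/B,\,K_\lambda^\epsilon)$ to the classical $GL(2)$ situation via Lemma~\ref{restrictiontofactor} and Corollary~\ref{semidirect}, feed the rank-one results into the Grothendieck spectral sequence for $ind^G_B \simeq ind^G_{P(\alpha)}\circ ind^{P(\alpha)}_B$, and chain (3), the $GL(2)$ Bott identification, and (4) to obtain (5). The paper's proof is terser---it simply invokes the required lemmas and then refers to Jantzen, II.5.2--II.5.4, verbatim---but the underlying argument is identical; in particular the ``obstacle'' you flag (that $(\ast)$ must hold as $P(\alpha)$-supermodules) is exactly what part~(1) plus the $GL(2)\times T'$-equivariance from Corollary~\ref{semidirect} supply, as the paper also notes.
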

\begin{proof}
Using Lemma \ref{restrictiontofactor}, Corollary \ref{semidirect}, Lemma \ref{indisom} and Lemma \ref{otherdefofind} 
one can repeat the proofs of Propositions II.5.2, II.5.3 and II.5.4 from \cite{jan}, per verbatim. 
For example, let us comment the last statement. By Proposition 1.28, \cite{chengwang}, $(\rho, \beta)=\frac{1}{2}(\beta, \beta)$ for any simple root $\beta\in\Phi^+$. Thus 
$s_{\alpha}.\lambda=s_{\alpha}(\lambda+\rho)-\rho=\lambda-((\lambda, \alpha^{\vee})+1)\alpha=s_{\alpha}\lambda-\alpha$ (cf. \cite{jan}, II.5.1(2)).  
In particular, $(s_{\alpha}.\lambda, \alpha^{\vee})=-(\lambda, \alpha^{\vee})-2\leq -2$. By the fourth statement 
$$H^{k+1} (G/B, K_{s_{\alpha}.\lambda}^{\epsilon})\simeq H^k (G/P(\alpha), H^1 (P(\alpha)/B, K_{s_{\alpha}.\lambda}^{\epsilon})).$$
Combining the first statment with the arguments from Proposition II.5.2 and Corollary II.5.3, \cite{jan}, one sees that 
$ind^{P(\alpha)}_B K^{\epsilon}_{\lambda}\simeq H^1 (P(\alpha)/B, K^{\epsilon}_{s_{\alpha}.\lambda})$. The third statement concludes the proof.
\end{proof}
Remind that a weight $\lambda\in X(T)$ is called {\it typical} if for any odd isotropic root $\alpha$ we have
$p\nmid (\lambda+\rho, \alpha)$.  
Otherwise, $\lambda$ is called {\it atypical} (cf. \cite{chengwang}, p.62). In our case, when $G=GL(m|n)$, all roots from $\Phi^+_1$ are isotropic. Thus
$\lambda$ is typical if and only if $p\nmid (\lambda+\rho, \alpha)$ 
for any $\alpha\in \Phi^+_1$.

Let $B'=B^-_{w'}$ be a Borel supersubgroup that is adjuacent with $B$. Recall that $\rho(w')$ is denoted by $\rho'$ and
$u._{w'}\lambda$ by $u.'\lambda$.  
\begin{pr}\label{passageviaodd}
If $u.\lambda$ is typical and $B$ is odd adjuacent with $B'$ via $\alpha\in\Pi_1$, then $H^k (G/B, K^{\epsilon}_{u.\lambda})\simeq H^k (G/B' , K^{\epsilon+1}_{u.'(\lambda-\alpha)})$ for any $k\geq 0$. Besides, $u.'(\lambda-\alpha)$ is typical (with respect to $\Phi'$).     
\end{pr}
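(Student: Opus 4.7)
The plan is to reduce the statement to Proposition \ref{adjuacentfortypical} by checking the genericity hypothesis, and then to match the resulting weight $u.\lambda-\alpha$ with $u.'(\lambda-\alpha)$ through a direct comparison of $\rho(w)$ and $\rho(w')$.

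First I would verify the hypothesis of Proposition \ref{adjuacentfortypical} for the weight $u.\lambda$. Since $\alpha \in \Pi_1$ is odd, hence isotropic (all roots in $\Phi^+_1$ are isotropic for $G=GL(m|n)$), the formula $(\rho,\beta)=\frac{1}{2}(\beta,\beta)$ from Proposition \ref{evenroot} gives $(\rho,\alpha)=0$. Therefore $(u.\lambda,\alpha)=(u.\lambda+\rho,\alpha)$, and typicality of $u.\lambda$ implies $p\nmid(u.\lambda,\alpha)$. Proposition \ref{adjuacentfortypical} then yields
\[
H^k(G/B, K^{\epsilon}_{u.\lambda}) \simeq H^k(G/B', K^{\epsilon+1}_{u.\lambda-\alpha}) \quad \text{for all } k\geq 0.
\]

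Next I would compute the difference $\rho'-\rho$. Because $B$ and $B'$ are odd adjacent via $\alpha$, the even positive systems are unchanged, so $\rho_0(w')=\rho_0(w)$, while in passing from $\Phi^+_w$ to $\Phi^+_{w'}$ we exchange $\alpha$ for $-\alpha$ inside the odd part, giving $\rho_1(w')=\rho_1(w)-2\alpha$. Hence $\rho'=\rho+\alpha$, and
\[
u.'(\lambda-\alpha)=u(\lambda-\alpha+\rho')-\rho'=u(\lambda+\rho)-\rho-\alpha=u.\lambda-\alpha.
\]
Substituting this identity into the isomorphism above gives the claimed formula.

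It remains to check that $u.'(\lambda-\alpha)$ is typical with respect to $\Phi'$. The shifted weight satisfies $u.'(\lambda-\alpha)+\rho' = u(\lambda+\rho) = u.\lambda+\rho$. For any odd positive root $\beta\in(\Phi^+_{w'})_1=((\Phi^+_w)_1\setminus\{\alpha\})\cup\{-\alpha\}$, the inner product $(u.'(\lambda-\alpha)+\rho',\beta)$ equals either $(u.\lambda+\rho,\beta)$ (if $\beta\in(\Phi^+_w)_1\setminus\{\alpha\}$) or $-(u.\lambda+\rho,\alpha)$ (if $\beta=-\alpha$), and both are nonzero modulo $p$ by typicality of $u.\lambda$. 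The whole argument is essentially bookkeeping; the only delicate point I expect is tracking the $\rho$-shift correctly so that the parity change $\epsilon\to\epsilon+1$ and the weight shift $\lambda\to\lambda-\alpha$ are absorbed cleanly into the change of dot action from $._w$ to $._{w'}$.
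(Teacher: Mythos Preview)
Your proof is correct and follows essentially the same route as the paper: compute $\rho'=\rho+\alpha$, deduce $u.'(\lambda-\alpha)=u.\lambda-\alpha$, and invoke Proposition~\ref{adjuacentfortypical}. You are in fact slightly more careful than the paper in explicitly verifying the hypothesis $p\nmid(u.\lambda,\alpha)$ of Proposition~\ref{adjuacentfortypical} via $(\rho,\alpha)=0$, a point the paper leaves implicit.
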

\begin{proof}
Observe that if $\mu$ is typical with respect to $\Phi$, then $\mu-\alpha$ is typical with respect to $\Phi'$. In fact, $\rho'=\rho+\alpha$ (cf. \cite{chengwang}, Proposition 1.28) and $\mu-\alpha+\rho'=\mu+\rho$. 
Since $u.'(\lambda-\alpha)=u.\lambda-\alpha$, the second statement follows. It remains to refer to Proposition \ref{adjuacentfortypical}. 
\end{proof}
For any two Borel supersubgroups $B$ and $B'$ whose even parts are the same, there is a sequence of Borel supersubgroups
$B=B^{(1)}, B^{(2)}, \ldots, B^{(t)}=B'$ such that $B^{(k)}, B^{(k+1)}$ are odd adjuacent, $1\leq k\leq t-1$.   
We say that $B'$ is a $t$ {\it steps neighbor} of $B$.
\begin{cor}\label{adjuacentviamany}
If $B'$ is a $t$ steps neighbor of $B$ and $u.\lambda$ is typical, then $$H^k (G/B, K^{\epsilon}_{u.\lambda})\simeq H^k (G/B' , K^{\epsilon+t-1}_{u.'(\lambda-(\rho'-\rho))})$$ 
for any $k\geq 0$.
\end{cor}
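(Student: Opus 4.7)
The plan is to proceed by induction on the number of steps $t$. The base case $t = 1$ is vacuous: then $B = B'$, $\rho' = \rho$, and the parity shift $\epsilon + t - 1 = \epsilon$, so the statement is the trivial identity $H^k(G/B, K^{\epsilon}_{u.\lambda}) \simeq H^k(G/B, K^{\epsilon}_{u.\lambda})$. This immediately suggests that each odd adjacency contributes exactly one parity shift and one $\alpha$-shift of the weight, and Proposition~\ref{passageviaodd} is the atomic step that performs both.

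For the inductive step, fix an intermediate chain $B = B^{(1)}, B^{(2)}, \ldots, B^{(t)} = B'$ with $B^{(i)}$ and $B^{(i+1)}$ odd adjacent via some $\alpha_i \in \Pi_1^{(i)}$. Write $\rho^{(i)}$ for the $\rho$ associated to $B^{(i)}$ and $u.^{(i)}$ for the corresponding dot action, so that $\rho^{(1)} = \rho$ and $\rho^{(t)} = \rho'$. Define inductively $\mu_1 = \lambda$ and $\mu_{i+1} = \mu_i - \alpha_i$. By Proposition~\ref{passageviaodd}, provided $u.^{(i)}\mu_i$ is typical with respect to $\Phi^{(i)}$, we obtain
\[
H^k(G/B^{(i)}, K^{\epsilon + i - 1}_{u.^{(i)}\mu_i}) \simeq H^k(G/B^{(i+1)}, K^{\epsilon + i}_{u.^{(i+1)}\mu_{i+1}}),
\]
and in the process $u.^{(i+1)}\mu_{i+1}$ is again typical, now with respect to $\Phi^{(i+1)}$. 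Starting from the hypothesis that $u.\lambda = u.^{(1)}\mu_1$ is typical, we can therefore iterate through all $t-1$ transitions, accumulating the stated parity shift $\epsilon + t - 1$ at the end.

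It remains to identify $\mu_t$ with $\lambda - (\rho' - \rho)$. This is where the key numerical identity enters: as used in the proof of Proposition~\ref{passageviaodd} (via Proposition~1.28 of~\cite{chengwang}), crossing an odd adjacency along $\alpha_i$ changes $\rho$ by exactly $\alpha_i$, that is $\rho^{(i+1)} = \rho^{(i)} + \alpha_i$. Telescoping yields $\rho' - \rho = \rho^{(t)} - \rho^{(1)} = \sum_{i=1}^{t-1} \alpha_i$, so
\[
\mu_t = \lambda - \sum_{i=1}^{t-1} \alpha_i = \lambda - (\rho' - \rho),
\]
and consequently the final cohomology lives on $K^{\epsilon + t - 1}_{u.'(\lambda - (\rho' - \rho))}$, as claimed.

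I do not expect a serious obstacle beyond careful bookkeeping. The only thing to check at each step is that typicality propagates, and this is built into Proposition~\ref{passageviaodd}. One mild subtlety worth stating explicitly is that the resulting isomorphism is independent of the choice of chain of Borel supersubgroups connecting $B$ to $B'$: both the total parity shift and the total weight shift are determined solely by the difference $\rho' - \rho$ and the number of steps $t$, so different chains yield the same target $H^k(G/B', K^{\epsilon + t - 1}_{u.'(\lambda - (\rho' - \rho))})$.
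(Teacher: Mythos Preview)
Your proof is correct and follows exactly the approach the paper intends: the corollary is stated without proof precisely because it is obtained by iterating Proposition~\ref{passageviaodd} along the chain $B=B^{(1)},\ldots,B^{(t)}=B'$, using the identity $\rho^{(i+1)}=\rho^{(i)}+\alpha_i$ to telescope the weight shift and the built-in propagation of typicality to justify each step. Your bookkeeping of the parity, the weight, and the telescoping of $\rho'-\rho$ is accurate.
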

Define a subset $X(T)^+_w=\{\lambda\in X(T) | (\lambda, \beta^{\vee})\geq 0, \ \forall \beta\in (\Phi_w^+)_0\}$.
The elements of $X(T)_w^+$ are called {\it dominant weights} with respect to a given Borel supersubgroup $B=B^-_w$ (or, with respect to a given root system $\Phi_w$). 

It is clear that $X(T)_w^+ =X(T)^+_{w_0}$. In other words, if $B$ and $B'$ have the same even parts, then their sets of dominant weights are the same too. Following our conventions we further omit the subindex $w$.

If $char K=0$, then $C_{\mathbb{Z}}$ ($\overline{C}_{\mathbb{Z}}$) consists of all $\lambda\in X(T)$ such that $0< (\lambda+\rho, \alpha^{\vee})$ for all $\alpha\in \Phi^+_0$ (respectively, $0\leq (\lambda+\rho, \alpha^{\vee})$ for all $\alpha\in \Phi^+_0$). If $char K=p>0$, then $C_{\mathbb{Z}}$ ($\overline{C}_{\mathbb{Z}}$) consists of all $\lambda\in X(T)$ such that $0 < (\lambda+\rho, \alpha^{\vee})\leq p$ for all $\alpha\in \Phi^+_0$ (respectively, $0\leq (\lambda+\rho, \alpha^{\vee})\leq p$ for all $\alpha\in \Phi^+_0$) (cf. \cite{jan}, II.5.5).   
\begin{rem}\label{asinpenkov'sarticle}
In \cite{pen} the sets $C_{\mathbb{Z}}$ and $\overline{C}_{\mathbb{Z}}$ are denoted by $C^{+}$ and $\overline{C}^{+}$ correspondingly.
\end{rem}
Let $u\in S_m\times S_n$. The length of $u$ is denoted by
by $l(u)$. Untill Theorem \ref{BorelBottWeylpPenkov} we suppose that $B$ is standard.
\begin{lm}\label{partialcase}
Assume that $\lambda\in\overline{C}_{\mathbb{Z}}\bigcap X(T)^+$ and $k>l(u)$. Then 
$$H^k (G/B, K^{\epsilon}_{u.\lambda})\simeq H^{k-l(u)} (G/B,  K^{\epsilon}_{\lambda}).$$
\end{lm}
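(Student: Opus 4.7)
The plan is to argue by induction on $l(u)$. The base case $l(u) = 0$ forces $u = 1$ and $u.\lambda = \lambda$, making the claim tautological. For the inductive step with $l(u) \geq 1$, factor $u = s_\alpha u_1$ where $\alpha \in \Pi_0$ is a simple even root and $l(u_1) = l(u) - 1$, and set $\mu := u_1.\lambda$, so that $u.\lambda = s_\alpha.\mu$ by associativity of the dot action. The idea is to use the shift isomorphism of Proposition~\ref{evenroot}(5) at the weight $\mu$ to trade one cohomological degree for the simple reflection $s_\alpha$, and then feed the remaining $u_1$ into the induction hypothesis.

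The key step is to verify that $(\mu, \alpha^\vee)$ satisfies the numerical hypothesis of Proposition~\ref{evenroot}(5). Using $(\rho, \alpha^\vee) = 1$ for a simple even root $\alpha$ (as recorded in the proof of Proposition~\ref{evenroot}), one computes
$$(\mu, \alpha^\vee) = (\lambda + \rho, u_1^{-1}\alpha^\vee) - 1.$$
The length identity $l(s_\alpha u_1) = l(u_1) + 1$ forces $\beta := u_1^{-1}\alpha \in \Phi_0^+$. Dominance $\lambda \in X(T)^+$ together with $(\rho, \beta^\vee) \geq 1$ for any positive even root gives $(\lambda + \rho, \beta^\vee) \geq 1$, while the alcove condition $\lambda \in \overline{C}_{\mathbb{Z}}$ gives $(\lambda + \rho, \beta^\vee) \leq p$ when $\mathrm{char}\, K = p > 0$. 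Hence $0 \leq (\mu, \alpha^\vee) \leq p - 1$. In characteristic zero this is already the hypothesis of Proposition~\ref{evenroot}(5); in positive characteristic one exhibits $(\mu, \alpha^\vee) = sp^m - 1$ by taking $(s, m) = ((\mu, \alpha^\vee) + 1, 0)$ when $(\mu, \alpha^\vee) \leq p - 2$ and $(s, m) = (1, 1)$ when $(\mu, \alpha^\vee) = p - 1$, with $0 < s < p$ in both cases.

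With the hypothesis verified, Proposition~\ref{evenroot}(5) applied to $\mu$ at degree $k - 1$ yields
$$H^k(G/B, K^\epsilon_{u.\lambda}) = H^k(G/B, K^\epsilon_{s_\alpha.\mu}) \simeq H^{k-1}(G/B, K^\epsilon_\mu) = H^{k-1}(G/B, K^\epsilon_{u_1.\lambda}).$$
Since $k > l(u)$ ensures $k - 1 > l(u_1)$, the induction hypothesis applied to $u_1$ at degree $k - 1$ produces
$$H^{k-1}(G/B, K^\epsilon_{u_1.\lambda}) \simeq H^{(k-1) - l(u_1)}(G/B, K^\epsilon_\lambda) = H^{k - l(u)}(G/B, K^\epsilon_\lambda),$$
and concatenating the two isomorphisms closes the induction.

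The main obstacle is the combinatorial estimate in the second paragraph: the assumption $\lambda \in \overline{C}_{\mathbb{Z}} \cap X(T)^+$ is tuned precisely so that the relevant pairing stays inside $[0, p - 1]$, where Proposition~\ref{evenroot}(5) is usable, at every reduced factorization step encountered in the induction. Once this window is confirmed, the rest is a direct super-analog of Jantzen's argument in \cite{jan}, II.5.5.
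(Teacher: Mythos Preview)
Your proof is correct and follows essentially the same route as the paper's: induct on $l(u)$, peel off a simple even reflection $s_\alpha$ with $l(s_\alpha u)=l(u)-1$, compute $(s_\alpha u.\lambda,\alpha^\vee)=(\lambda+\rho,\beta^\vee)-1$ with $\beta=u_1^{-1}\alpha\in\Phi_0^+$, use $\lambda\in\overline{C}_{\mathbb{Z}}\cap X(T)^+$ to land in $[0,p-1]$, and apply Proposition~\ref{evenroot}(5). Your write-up is in fact more careful than the paper's in explicitly exhibiting the $sp^m-1$ form required by Proposition~\ref{evenroot}(5); the only small point to be aware of is that your assertion ``$(\rho,\beta^\vee)\geq 1$ for any positive even root'' is not literally recorded in Proposition~\ref{evenroot} (only the simple case is), but it follows immediately in type $A$ since $\beta^\vee$ is a nonnegative integer combination of simple even coroots.
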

\begin{proof}
If $l(u)>0$, then there is $\alpha\in \Pi_0$ such that $l(s_{\alpha}u)=l(u)-1$. Arguing as in \cite{jan}, Corollary II.5.5, we obtain 
$$(s_{\alpha}u.\lambda, \alpha^{\vee})=(\lambda+\rho, \beta^{\vee})-1, \beta=-u^{-1}(\alpha)\in\Phi^+_0.$$ 
Since $\lambda\in \overline{C}_{\mathbb{Z}}\bigcap X(T)^+$, we have $1\leq (\lambda+\rho, \beta^{\vee})\leq p$ and therefore, 
$0\leq (s_{\alpha}u.\lambda, \alpha^{\vee})\leq p-1$. 
The fifth statement of Proposition \ref{evenroot} infers
$$H^{k-1} (G/B, K^{\epsilon}_{s_{\alpha}u.\lambda})\simeq H^k (G/B, K^{\epsilon}_{u.\lambda}).$$ 
Lemma follows by induction on $l(u)$. 
\end{proof}
\begin{pr}\label{partialcasecor}
If $\lambda\in\overline{C}_{\mathbb{Z}}\bigcap X(T)^+$, then $H^k (G/B, K^{\epsilon}_{\lambda})=0$ for any
$k\geq 1$.
\end{pr}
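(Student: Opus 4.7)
The plan is to mimic the proof of Proposition II.5.5 in \cite{jan}: combine the shift isomorphism of Lemma \ref{partialcase} with a cohomological dimension bound on $G/B$. Let $u_0$ denote the longest element of the Weyl group $S_m \times S_n$, of length $N := |(\Phi_w^+)_0|$, which equals the dimension of the even flag variety $G_{ev}/B_{ev}$. Applying Lemma \ref{partialcase} with $u = u_0$ and replacing $k$ by $k + N$ yields, for every $k \geq 1$, the isomorphism
$$H^{k+N}(G/B, K^{\epsilon}_{u_0.\lambda}) \simeq H^k(G/B, K^{\epsilon}_{\lambda}).$$
Hence the desired vanishing reduces to showing that $H^m(G/B, V) = 0$ whenever $m > N$, for any $B$-supermodule $V$.

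For this cohomological dimension bound I would use that the underlying topological space of the superscheme $G/B$ coincides with that of $(G/B)_{ev} \simeq G_{ev}/B_{ev}$, a projective variety of dimension $N$. Since quasi-coherent sheaf cohomology on a Noetherian topological space vanishes above its Krull dimension (Grothendieck's vanishing), $H^m(G/B, V) = 0$ for all $m > N$. In positive characteristic this bound is also visible through Corollary \ref{reductiontoeven}: the restriction $H^m(G/B, V)|_{G_{ev}}$ is computed as cohomology on $G_{ev}/B_{ev}G_{ev,r}$, whose underlying space again has dimension $N$ because $B_{ev}G_{ev,r}/B_{ev}$ is infinitesimal over $B_{ev}$, so the projection to $G_{ev}/B_{ev}$ is a finite radicial cover.

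Putting the two ingredients together, the left-hand side of the shift isomorphism is $0$ for every $k \geq 1$, and therefore $H^k(G/B, K^{\epsilon}_{\lambda}) = 0$ for all $k \geq 1$. The only step that is not purely formal is the cohomological dimension bound on the superscheme $G/B$; I would expect this to be the main technical point, although it follows from standard facts about quasi-coherent sheaves on superschemes together with the reduction to $G_{ev}$ already recorded in Corollary \ref{reductiontoeven}.
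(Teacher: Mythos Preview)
Your proof is correct and follows essentially the same route as the paper: take the longest element $u_0$ of $S_m\times S_n$, use Lemma \ref{partialcase} to shift the cohomological degree by $l(u_0)=\dim G_{ev}/B_{ev}$, and then invoke Grothendieck's vanishing together with the identification of the underlying topological space of $G/B$ with that of $G_{ev}/B_{ev}$ (the paper cites \cite{br}, p.~3 and Proposition~9.3 of \cite{maszub} for exactly this). Your additional remark that the bound can also be seen in positive characteristic through Corollary~\ref{reductiontoeven} is a nice supplementary observation but not needed for the argument.
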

\begin{proof}
Let $u_0$ be a longest element of $S_m\times S_n$ (with respect to $\Phi^+_0$). Notice that $l(u_0)=
\dim G_{ev}/B_{ev}=\frac{m(m-1)}{2}+\frac{n(n-1)}{2}$.
Combining Grothendieck's vanishing theorem from \cite{br}, p.3, with Proposition 9.3 from \cite{maszub}, we see that
$H^{k+l(u_0)} (G/B, K^{\epsilon}_{u_0 .\lambda})=0$. It remains to refer to Lemma \ref{partialcase}.
\end{proof}
\begin{theorem}\label{BorelBottWeyl}
The following statements hold for any $k\geq 0, u\in S_m\times S_n$. 
\begin{enumerate}
\item If $\lambda\in\overline{C}_{\mathbb{Z}}\setminus X(T)^+$, then $H^k (G/B, K^{\epsilon}_{u.\lambda})=0$. 
\item If $\lambda\in\overline{C}_{\mathbb{Z}}\bigcap X(T)^+$, then 
$$H^k (G/B, K^{\epsilon}_{u.\lambda})\simeq\{\begin{array}{c} 
0, k\neq l(u), \\
H^0(G/B, K^{\epsilon}_{\lambda}), k=l(u).
\end{array}$$
\end{enumerate}
\end{theorem}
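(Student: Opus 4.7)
The plan is to prove both assertions simultaneously by induction on $l(u)$, following the strategy of \cite{jan}, II.5.5. The base case $l(u)=0$ is already in hand: part (2) is Proposition \ref{partialcasecor}, while for part (1) one observes that $\lambda\notin X(T)^+$ produces a simple even root $\alpha\in\Pi_0$ with $(\lambda,\alpha^\vee)<0$, and the condition $\lambda\in\overline{C}_{\mathbb{Z}}$ together with $(\rho,\alpha^\vee)=1$ forces $(\lambda,\alpha^\vee)=-1$, whence the vanishing of all $H^k(G/B,K^\epsilon_\lambda)$ follows from Proposition \ref{evenroot}(2).

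For the inductive step with $l(u)\geq 1$, I would pick $\alpha\in\Pi_0$ with $l(s_\alpha u)=l(u)-1$, set $u'=s_\alpha u$ and $\beta=-u^{-1}\alpha\in\Phi_0^+$. A direct calculation with the dot action gives
$$(u.\lambda,\alpha^\vee)=-(\lambda+\rho,\beta^\vee)-1, \qquad (u'.\lambda,\alpha^\vee)=(\lambda+\rho,\beta^\vee)-1.$$
If $(\lambda+\rho,\beta^\vee)=0$ (which can only occur in the setting of part (1)), then $(u.\lambda,\alpha^\vee)=-1$ and Proposition \ref{evenroot}(2) immediately kills all $H^k(G/B,K^\epsilon_{u.\lambda})$. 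Otherwise $(\lambda+\rho,\beta^\vee)\geq 1$, giving $(u.\lambda,\alpha^\vee)\leq -2$, and Proposition \ref{evenroot}(4) yields $H^0(G/B,K^\epsilon_{u.\lambda})=0$ by vanishing of cohomology in negative degree.

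For $k\geq 1$ in this second case, I transfer the statement from $u'$ to $u$ by means of Proposition \ref{evenroot}(5). The bound $\lambda\in\overline{C}_{\mathbb{Z}}$ forces $(\lambda+\rho,\beta^\vee)\leq p$, and every integer in $\{1,\ldots,p\}$ has the form $sp^m$ with $0<s<p$ and $m\in\{0,1\}$, so the hypothesis of (5) is met at $\mu=u'.\lambda$. Since $s_\alpha.\mu=u.\lambda$, this produces
$$H^{k-1}(G/B,K^\epsilon_{u'.\lambda})\simeq H^k(G/B,K^\epsilon_{u.\lambda}).$$
The inductive hypothesis applied to $u'$ (with $l(u')=l(u)-1$) then transports both conclusions: in setting (1) the left side vanishes for all $k\geq 1$, and in setting (2) the unique nonvanishing degree moves from $l(u')$ to $l(u')+1=l(u)$ with value $H^0(G/B,K^\epsilon_\lambda)$.

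The main subtlety, rather than a single difficult obstacle, is to justify the super-specific numerics underlying the above. One must verify $(\rho,\gamma^\vee)\geq 1$ for every $\gamma\in\Phi_0^+$ in the super normalization $\rho=\tfrac{1}{2}(\rho_0-\rho_1)$, which reduces to $(\rho_1,\gamma^\vee)=0$ on even positive roots and follows by direct computation from the explicit formula for $\rho_1(\mathrm{id})$. One also needs the bound $1\leq(\lambda+\rho,\beta^\vee)\leq p$ in part (2) to fill exactly the admissible range of Proposition \ref{evenroot}(5); this is precisely the definition of $\overline{C}_{\mathbb{Z}}\cap X(T)^+$ combined with the above positivity of $(\rho,\beta^\vee)$. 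Once these are in place, the induction proceeds as in the classical case.
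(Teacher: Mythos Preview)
Your proposal is correct and follows essentially the same route as the paper's own proof: induction on $l(u)$, with the base case handled by Proposition~\ref{partialcasecor} and (for part (1)) the observation that $(\lambda,\alpha^\vee)=-1$ for some simple even root, and the inductive step splitting into the case $(\lambda+\rho,\beta^\vee)=0$ (Proposition~\ref{evenroot}(2)) versus $(\lambda+\rho,\beta^\vee)\in\{1,\dots,p\}$ (Proposition~\ref{evenroot}(5)). You are simply more explicit than the paper, which compresses the base case and the verification of $(\rho,\alpha^\vee)=1$ for simple even $\alpha$ into a reference to \cite{jan}, II.5.5; note that since $B$ is assumed standard here, the even block and odd block sit contiguously in the sequence $w(1),\dots,w(m+n)$, so your computation with $\rho_1(\mathrm{id})$ really does cover the general standard case up to conjugation by $w_0\in S_m\times S_n$.
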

\begin{proof}   
Let $u=1$. If $\lambda\in\overline{C}_{\mathbb{Z}}\setminus X(T)^+$, then one can argue as in Corollary II.5.5, \cite{jan}. The second statement is proved in Proposition \ref{partialcasecor}.

Assume that $l(u)>0$. Again, there is $\alpha\in\Pi_0$ such that $l(s_{\alpha}u)=l(u)-1$. As in Lemma \ref{partialcase}
we have either $0\leq (s_{\alpha}u.\lambda, \alpha^{\vee})\leq p-1$ and then $$H^k (G/B, K^{\epsilon}_{u.\lambda})\simeq 
H^{k-1} (G/B, K^{\epsilon}_{s_{\alpha}u.\lambda})$$ or $(s_{\alpha}u.\lambda, \alpha^{\vee})=-1=(u.\lambda, \alpha^{\vee})$ and then
$$H^k (G/B, K^{\epsilon}_{u.\lambda})=
H^{k-1} (G/B, K^{\epsilon}_{s_{\alpha}u.\lambda})=0$$ by the second statement of Proposition \ref{evenroot}.
Our theorem follows by induction on $l(u)$.  
\end{proof}
Combining all of the above, we obtain the following characteristic free variant of Penkov's theorem (compare with
\cite{pen}, Theorem 1).
\begin{theorem}\label{BorelBottWeylpPenkov}
Let $B$ be a (not necessary standard) Borel supersubgroup of $G$ and $B'$ be a standard $t$ steps neighbor of $B$. If $u.\lambda$ is typical (with respect to $B$), then :
\begin{enumerate}
\item $H^k (G/B, K^{\epsilon}_{u.\lambda})=0$, whenever $\lambda\in\overline{C}_{\mathbb{Z}}\setminus
C_{\mathbb{Z}}$.
\item $H^k (G/B, K^{\epsilon}_{u.\lambda})=0$, whenever $\lambda\in C_{\mathbb{Z}}$ but $k\neq l(u)$.
\item $H^k (G/B, K^{\epsilon}_{u.\lambda})\simeq ind^G_{B'} K^{\epsilon+t-1}_{\lambda-(\rho'-\rho)}$, whenever
$\lambda\in C_{\mathbb{Z}}$, $k =l(u)$.
\end{enumerate}   
\end{theorem}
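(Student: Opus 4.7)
The strategy is to use Corollary~\ref{adjuacentviamany} to reduce to the standard Borel $B'$, then invoke Theorem~\ref{BorelBottWeyl}.

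First, since $u.\lambda$ is typical and $B'$ is a standard $t$-step neighbor of $B$, Corollary~\ref{adjuacentviamany} yields
\[
H^k(G/B, K^{\epsilon}_{u.\lambda}) \;\simeq\; H^k(G/B', K^{\epsilon+t-1}_{u.'(\lambda-(\rho'-\rho))}).
\]
Writing $\mu := \lambda-(\rho'-\rho)$, a direct computation gives $u.'\mu = u(\lambda+\rho)-\rho' = u.\lambda-(\rho'-\rho)$; the key identity is $(\mu+\rho',\beta^{\vee})=(\lambda+\rho,\beta^{\vee})$ for every even coroot $\beta^{\vee}$.

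Next I would translate the alcove hypotheses from $\lambda$ (with respect to $B$) to $\mu$ (with respect to $B'$). Since passing through odd adjacencies leaves the even positive system unchanged, $(\Phi^+)_0=(\Phi'^+)_0$ and the dominance cone $X(T)^+$ coincides for $B$ and $B'$. The identity above then gives $\lambda\in C_{\mathbb{Z}}$ (with respect to $B$) if and only if $\mu\in C_{\mathbb{Z}}$ (with respect to $B'$), and analogously for $\overline{C}_{\mathbb{Z}}$. Moreover, for the standard Borel $B'$, the summand $\rho'_1$ is orthogonal to every even root, so $(\rho',\beta^{\vee})=\frac{1}{2}(\rho'_0,\beta^{\vee})\geq 1$ for each even positive $\beta$; this forces $\overline{C}_{\mathbb{Z}}\cap X(T)^+ = C_{\mathbb{Z}}$ with respect to $B'$, equivalently $\overline{C}_{\mathbb{Z}}\setminus C_{\mathbb{Z}}=\overline{C}_{\mathbb{Z}}\setminus X(T)^+$.

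Finally I would apply Theorem~\ref{BorelBottWeyl} to $H^k(G/B',K^{\epsilon+t-1}_{u.'\mu})$. Case~(1) there handles $\mu\in\overline{C}_{\mathbb{Z}}\setminus X(T)^+$, corresponding under the translation to $\lambda\in\overline{C}_{\mathbb{Z}}\setminus C_{\mathbb{Z}}$ and yielding statement~(1). Case~(2) with $k\neq l(u)$ gives statement~(2). Case~(2) with $k=l(u)$ gives
\[
H^{l(u)}(G/B',K^{\epsilon+t-1}_{u.'\mu}) \;\simeq\; H^0(G/B',K^{\epsilon+t-1}_{\mu}) \;=\; ind^G_{B'}K^{\epsilon+t-1}_{\lambda-(\rho'-\rho)},
\]
which is statement~(3). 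Since the substantive content is already packaged into Corollary~\ref{adjuacentviamany} and Theorem~\ref{BorelBottWeyl}, the main work here lies in the bookkeeping around the shift $\mu=\lambda-(\rho'-\rho)$ and in verifying that the alcove-region conditions identify correctly between $B$ and $B'$; the crucial observation is that $\rho'_1$ is orthogonal to the even positive roots for a standard Borel, which is what makes the translation of $C_{\mathbb{Z}}$ compatible with $X(T)^+$.
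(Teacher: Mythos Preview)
Your proposal is correct and follows essentially the same route as the paper: reduce to the standard Borel $B'$ via Corollary~\ref{adjuacentviamany}, verify that the alcove conditions $\lambda\in C_{\mathbb{Z}}$ and $\lambda\in\overline{C}_{\mathbb{Z}}\setminus C_{\mathbb{Z}}$ transfer unchanged under the shift $\mu=\lambda-(\rho'-\rho)$ (since $\mu+\rho'=\lambda+\rho$ and $(\Phi^+)_0=(\Phi'^+)_0$), and then invoke Theorem~\ref{BorelBottWeyl}. Your justification of $\overline{C}_{\mathbb{Z}}\cap X(T)^+=C_{\mathbb{Z}}$ for standard $B'$ via the orthogonality of $\rho'_1$ to the even roots is a bit more explicit than the paper's one-line assertion, but amounts to the same observation (indeed, for $\beta\in(\Pi')_0$ the simple reflection $s_\beta$ permutes $(\Phi'^+)_1$, so $(\rho'_1,\beta^{\vee})=0$, and in type $A$ this extends to all even positive roots).
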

\begin{proof}
The conditions $\lambda\in\overline{C}_{\mathbb{Z}}\setminus C_{\mathbb{Z}}$ and 
$\lambda\in C_{\mathbb{Z}}$ are invariant with respect to a passage from $B$ to $B''$ and from $\lambda$ to $\lambda-\alpha$ respectively, where $B''$ is adjuacent to $B$ via $\alpha\in\Pi_1$. For example, the first condition is equivalent to the conditions :
\begin{enumerate} 
\item $0\leq (\lambda+\rho, \beta^{\vee})\leq a$ for any $\beta\in\Phi_0^+$;
\item there is $\gamma\in\Phi_0^+$ such that $(\lambda+\rho, \gamma^{\vee})=0$.
\end{enumerate}
Here $a=p$ whenever $char K=p>0$, otherwise $a=+\infty$. It remains to notice that $\lambda-\alpha+\rho''=\lambda+\rho$ and $\Phi^+_0=(\Phi'')_0^+$. Therefore, one can assume that $B$ is a standard Borel supersubgroup. Observe that
if $B$ is standard, then $\overline{C}_{\mathbb{Z}}\setminus C_{\mathbb{Z}}=\overline{C}_{\mathbb{Z}}\setminus X(T)^{+}$ 
and $C_{\mathbb{Z}}=\overline{C}_{\mathbb{Z}}\bigcap X(T)^{+}$. Theorem \ref{BorelBottWeyl} concludes the proof.
\end{proof}
\begin{pr}\label{simpleness}
If $\lambda\in C_{\mathbb{Z}}$ and $\lambda$ is typical, then $H^0(G/B, K^{\epsilon}_{\lambda})$ is simple.
\end{pr}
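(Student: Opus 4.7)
The plan is to reduce to the case of a standard Borel, rewrite $H^0(G/B, K^{\epsilon}_{\lambda})$ as an induction from the parabolic $P = BG_{ev}$ via a Grothendieck spectral sequence, and conclude by a character comparison exploiting typicality. For the reduction, Theorem \ref{BorelBottWeylpPenkov}(3) applied with $u = 1$ gives an isomorphism
$$H^0(G/B, K^{\epsilon}_{\lambda}) \simeq H^0(G/B', K^{\epsilon + t - 1}_{\lambda - (\rho' - \rho)})$$
for a standard $t$-step neighbor $B'$; as argued in the proof of that theorem and in Proposition \ref{passageviaodd}, both membership in $C_{\mathbb{Z}}$ and typicality are invariant under odd adjacency, so we may assume $B$ itself is standard and keep writing $\lambda$ for the shifted weight.

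Set $P := B G_{ev}$. For $B$ standard one has $U_1^- \unlhd P$, $P/U_1^- \simeq G_{ev}$, $P/B \simeq G_{ev}/B_{ev}$, and the multiplication morphism yields an isomorphism of superschemes $P \times U_1^+ \simeq G$, so $G/P \simeq U_1^+$ is an affine (purely odd) superscheme. Consequently $ind^G_P$ is exact, and the Grothendieck spectral sequence for $B \leq P \leq G$ degenerates to
$$H^k(G/B, K^{\epsilon}_{\lambda}) \simeq ind^G_P H^k(P/B, K^{\epsilon}_{\lambda}).$$
Since $U_1^- \leq B$ acts trivially on $K^{\epsilon}_{\lambda}$, Lemma \ref{restrictiontofactor} with $N = U_1^-$ identifies $H^k(P/B, K^{\epsilon}_{\lambda})$ with $H^k(G_{ev}/B_{ev}, K_{\lambda})^{\epsilon}$. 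The hypothesis $\lambda \in C_{\mathbb{Z}}$ places $\lambda$ in the dominant bottom $p$-alcove of $G_{ev}$, so Kempf's vanishing gives $H^k = 0$ for $k > 0$, while $L_{ev}(\lambda) := H^0(G_{ev}/B_{ev}, K_{\lambda})$ is a simple $G_{ev}$-module (\cite{jan}, II.5.6). Therefore
$$H^0(G/B, K^{\epsilon}_{\lambda}) \simeq ind^G_P L_{ev}(\lambda)^{\epsilon},$$
with $U_1^-$ inflated to act trivially.

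By Remark \ref{trivialcase}, the underlying superspace is $L_{ev}(\lambda)^{\epsilon} \otimes K[U_1^+]$, whose $T$-character equals $\mathrm{ch}(L_{ev}(\lambda)) \cdot \prod_{\alpha \in \Phi_1^+} (1 + e^{-\alpha})$. The highest weight is $\lambda$ with multiplicity one, so $L(\lambda^{\epsilon})$ embeds as the socle (cf.\ \cite{brunkuj}, Lemma 4.1). Combining the vanishing $H^k = 0$ for $k > 0$ from Theorem \ref{BorelBottWeylpPenkov} with the characteristic-free Euler-characteristic formula of Section 13, one obtains $\mathrm{ch}\, H^0(G/B, K^{\epsilon}_{\lambda}) = \chi(B, \lambda^{\epsilon})$; the typical case of the Kac--Weyl character formula identifies this with $\mathrm{ch}\, L(\lambda^{\epsilon})$, so the characters match and $H^0(G/B, K^{\epsilon}_{\lambda}) = L(\lambda^{\epsilon})$ is simple.

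The main obstacle is this final character identification in positive characteristic. In characteristic zero it is Kac's classical typicality theorem; in characteristic $p$ one needs both hypotheses, namely typicality (to rule out odd linkage modulo $p$) together with $\lambda \in C_{\mathbb{Z}}$ (to rule out nontrivial even linkage inside the bottom alcove). A direct alternative is to show that $L_{ev}(\lambda)^{\epsilon} \otimes K[U_1^+]$ has no $U^+$-invariant vector of weight $\lambda - \pi$ for any nonempty partial sum $\pi$ of roots in $\Phi_1^+$, each candidate being obstructed by the nonvanishing of $(\lambda + \rho, \alpha) \not\equiv 0 \pmod{p}$ guaranteed by typicality.
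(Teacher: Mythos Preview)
Your reduction to a standard Borel and the identification
\[
H^0(G/B, K^{\epsilon}_{\lambda}) \;\simeq\; ind^G_P\, L_{ev}(\lambda)^{\epsilon}
\]
with $P=BG_{ev}$ (the Kac module) is correct and is essentially what the paper uses implicitly. The gap is exactly the one you flag yourself: the concluding character comparison is circular in positive characteristic. You write that $\chi(B,\lambda^{\epsilon})$ equals $\mathrm{ch}\,L(\lambda^{\epsilon})$ ``by the typical case of the Kac--Weyl character formula'', but in characteristic $p$ there is no such formula available a priori; knowing $\mathrm{ch}\,L(\lambda^{\epsilon})$ for typical $\lambda$ is equivalent to the simplicity statement you are trying to prove. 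So as written the argument does not close.

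The paper's proof fills this gap by citation rather than by a character argument: after reducing to $B=B^-_{id}$ it invokes Theorem~1 of \cite{marko}, which is precisely an irreducibility criterion for the Kac module $ind^G_P\, L_{ev}(\lambda)^{\epsilon}$ in arbitrary characteristic, together with \cite{jan}, II.5.6, to guarantee that $L_{ev}(\lambda)=H^0_{ev}(\lambda)$ is simple when $\lambda\in C_{\mathbb{Z}}$. Your ``direct alternative'' --- checking that $L_{ev}(\lambda)^{\epsilon}\otimes K[U_1^+]$ has no $U^+$-primitive vector of weight $\lambda-\pi$ for a nonempty partial sum $\pi$ of odd positive roots --- is exactly the content of Marko's result, and carrying it out is a genuine computation (one must analyse the action of the odd root vectors $E_{\alpha}$, $\alpha\in\Phi_1^+$, on the exterior factor $K[U_1^+]\simeq\Lambda(\mathfrak{u}_1^+)^*$ and show that the resulting linear system has no nontrivial solution precisely when all $(\lambda+\rho,\alpha)\not\equiv 0\pmod p$). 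Until that computation is supplied or Marko's theorem is cited, the proof is incomplete.
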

\begin{proof}
Arguing as in the above theorem and using Corollary \ref{adjuacentviamany} one can suppose that
$B$ is standard and even more, $B=B^-_{id}$. It remains to combine Theorem 1 from \cite{marko} with Corollary II.5.6 from \cite{jan}.
\end{proof}

\section{Euler characteristics}

For any $G$-supermodule $V$ let $[V]$ denote its isomorphism class in the Grothendieck ring $\mathcal{K}(G)$ of the abelian category $G-smod$.
If $H\leq G$ and $V$ is an $H$-supermodule, then one can define the {\it Euler characteristic}
$\chi(H, V)=\sum_{k\geq 0}(-1)^{k}[H^k(G/H, V)]$.
In the partial case $H=B, V=K^{\epsilon}_{\lambda}$, denote $\chi(B, K^{\epsilon}_{\lambda})$ by $\chi(B, \lambda^{\epsilon})$. 
\begin{pr}\label{adjuacentforatypical}
If $B$ and $B'$ are adjacent via $\alpha\in \Pi_1$, then
$\chi(B, \lambda^{\epsilon})=\chi(B', (\lambda-\alpha)^{\epsilon+1})$.
\end{pr}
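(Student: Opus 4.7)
The plan is to push both sides of the desired equality up to the minimal parabolic $P(\alpha)\supseteq B,B'$ and compare the resulting induced modules in the Grothendieck group of $P(\alpha)$. By Proposition \ref{indover}(2) together with Remark \ref{remaboutadj}, for every $k\geq 0$ we have
$$H^k(G/B, K^{\epsilon}_{\lambda})\simeq H^k(G/P(\alpha), ind^{P(\alpha)}_B K^{\epsilon}_{\lambda})$$
and symmetrically
$$H^k(G/B', K^{\epsilon+1}_{\lambda-\alpha})\simeq H^k(G/P(\alpha), ind^{P(\alpha)}_{B'} K^{\epsilon+1}_{\lambda-\alpha}).$$
Taking alternating sums, this reduces the claim to comparing the Euler characteristics of these two $P(\alpha)$-supermodules with respect to the functor $ind^G_{P(\alpha)}$.

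Next, the long exact sequence of right derived functors, combined with the Grothendieck vanishing theorem cited in the proof of Proposition \ref{partialcasecor}, shows that $M\mapsto \sum_{k\geq 0}(-1)^k [R^k ind^G_{P(\alpha)} M]$ is a well-defined additive map that factors through the Grothendieck group $\mathcal{K}(P(\alpha))$. It therefore suffices to establish
$$[ind^{P(\alpha)}_B K^{\epsilon}_{\lambda}]=[ind^{P(\alpha)}_{B'} K^{\epsilon+1}_{\lambda-\alpha}]\qquad\text{in }\mathcal{K}(P(\alpha)).$$

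I would then handle the two cases separately. In the typical case $p\nmid(\lambda,\alpha)$ one has also $p\nmid(\lambda-\alpha,\alpha)$ because the odd root $\alpha$ is isotropic, so Proposition \ref{reprofaddroot}(2) identifies both sides with the simple module $L_P(\lambda^{\epsilon})$ and the classes coincide. In the atypical case $p\mid(\lambda,\alpha)=(\lambda-\alpha,\alpha)$, Proposition \ref{reprofaddroot}(3) produces a two-term composition series for $ind^{P(\alpha)}_B K^{\epsilon}_{\lambda}$ with factors $L_P((\lambda-\alpha)^{\epsilon+1})$ and $L_P(\lambda^{\epsilon})$, while Proposition \ref{reprofaddroot}(4), applied to the weight $\lambda-\alpha$ of parity $\epsilon+1$, produces a two-term composition series for $ind^{P(\alpha)}_{B'} K^{\epsilon+1}_{\lambda-\alpha}$ with factors $L_P(\lambda^{\epsilon+2})=L_P(\lambda^{\epsilon})$ and $L_P((\lambda-\alpha)^{\epsilon+1})$. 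Both modules therefore have the same multiset of simple factors, hence the same class in $\mathcal{K}(P(\alpha))$.

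The only delicate point is the parity bookkeeping in the last step: one must verify that the shift $\epsilon\mapsto\epsilon+1$ on the $B'$-side combines with the shift by $\alpha$ in Proposition \ref{reprofaddroot}(4) to give back the original parity $\epsilon$ on the $\lambda$-weight factor, so that the two composition series match not only in weights but also in parities. Once that is in hand, the three steps above close the proof without any further ingredient.
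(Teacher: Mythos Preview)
Your proposal is correct and follows essentially the same route as the paper's proof: push both Euler characteristics to $P(\alpha)$ via Proposition \ref{indover}(2), then compare the composition factors of the two induced $P(\alpha)$-supermodules using Proposition \ref{reprofaddroot}. The paper is simply more terse, dismissing the typical case in one line (it is already Proposition \ref{adjuacentfortypical}) and writing the atypical comparison as the single chain $\chi(B,\lambda^{\epsilon})=\chi(G/P(\alpha),L_P((\lambda-\alpha)^{\epsilon+1}))+\chi(G/P(\alpha),L_P(\lambda^{\epsilon}))=\chi(B',(\lambda-\alpha)^{\epsilon+1})$; your explicit Grothendieck-group formulation and parity check are exactly what underlies that chain.
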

\begin{proof}
The case $p |(\lambda, \alpha)$ should be considered only. Combining Proposition \ref{indover} (2) with Proposition \ref{reprofaddroot} (2, 3) we obtain
$$\chi(B, \lambda^{\epsilon})=\chi(G/P(\alpha), L_P((\lambda-\alpha)^{\epsilon+1}))
+\chi(G/P(\alpha), L_P(\lambda^{\epsilon}))=\chi(B', (\lambda-\alpha)^{\epsilon+1}).$$
\end{proof}
As in \cite{grusserg} we introduce a parity function $p : X(T)\to \mathbb{Z}_2$ such that $p(\lambda+\alpha)=p(\lambda)+
p(\alpha), \lambda\in X(T), \alpha\in\Phi$. This definition depends of $\Phi=\Phi_w$ and it is not unique in general (even if 
$w$ is fixed). Let $\mathcal{F}$ denotes the full
subcategory of $G-smod$ consisting of finite dimensional supermodules such that the parity of any weight space coincides
with the parity of the corresponding weight. It is not hard to see that $G-smod = \mathcal{F}\oplus\Pi\mathcal{F}$.

Let $\mathcal{R} = \mathbb{Z}[e^{\lambda}]$ for all $\lambda\in X(T)$. For each $M\in \mathcal{F}$ one can define
the character
$$ch(M) =\sum_{\mu\in X(T)}\dim M_{\mu}e^{\mu}\in\mathcal{R}.
$$
Denote by $\mathcal{K}(\mathcal{F})$ the Grothendieck ring of the category $\mathcal{F}$. 
Since the map $ch$ is additive on the short exact sequences and multiplicative on the tensor products, $ch : \mathcal{K}(\mathcal{F})\to\mathcal{R}$ is a homomorphism of rings. Besides, 
the elements $ch(L(\lambda))$ are linearly independent. Our convention about parity implies that for any
two $G$-supermodules $M$ and $N$ in $\mathcal{F}$, $ch(M) = ch(N)$ if and only if $[M] = [N]$ in $\mathcal{K}(\mathcal{F})$. Hence $ch$ is injective. 
\begin{pr}\label{characterformula}
For any $\lambda\in X(T)$ we have the following character formula :
$$ch(\chi(B, \lambda^{\epsilon}))=\frac{\prod_{\alpha\in\Phi^+_1}(1+e^{-\alpha})}{\prod_{\alpha\in\Phi^+_0}(1-e^{-\alpha})}
\sum_{u\in W}(-1)^ue^{u.\lambda}$$
\end{pr}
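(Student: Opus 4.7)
The plan is to combine Proposition \ref{adjuacentforatypical} (to reduce to a standard Borel) with Corollary \ref{reductiontoeven} (to reduce to cohomologies on $G_{ev}/B_{ev}$) and finally apply the classical Weyl character formula. First I would observe that both sides of the claimed identity are invariant under a passage $B \leadsto B'$ through an odd adjacency $\alpha\in\Pi_1$, $\lambda\leadsto \lambda-\alpha$, $\epsilon\leadsto\epsilon+1$. On the left this is exactly Proposition \ref{adjuacentforatypical}. On the right, swapping $\alpha$ with $-\alpha$ inside $\Phi_1^+$ multiplies $\prod_{\beta\in\Phi_1^+}(1+e^{-\beta})$ by $e^{\alpha}$ (since $1+e^{\alpha}=e^{\alpha}(1+e^{-\alpha})$), and the identity $\rho'=\rho+\alpha$ yields $u.'(\lambda-\alpha)=u.\lambda-\alpha$, which multiplies $\sum_u(-1)^ue^{u.\lambda}$ by $e^{-\alpha}$; the two factors cancel. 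Iterating along a chain of odd adjacencies reduces the problem to the case where $B$ is standard, in which the dot action and the set $\Phi_0^+$ agree with the classical ones for $G_{ev}$.

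With $B$ standard, I would then exploit the filtration of $\mathrm{ind}^{BG_r}_B K^{\epsilon}_{\lambda}|_{B_{ev}G_{ev,r}}$ recalled in the introduction (from \cite{br}, Theorem 2.7, applied to the decomposable superscheme $GB_r/B$): its subquotients are, up to parity shift by $|S|$, the modules $\mathrm{ind}^{B_{ev}G_{ev,r}}_{B_{ev}}K_{\lambda-\pi_S}$, where $S$ runs over subsets of $\Phi_1^+$ and $\pi_S=\sum_{\alpha\in S}\alpha$. Combining this with Corollary \ref{reductiontoeven}, Lemma \ref{restrictiontofactor} (to replace $G_{ev}/B_{ev}G_{ev,r}$ by $G_{ev}/B_{ev}$ via transitivity of induction), and the additivity of Euler characteristics on short exact sequences, I get the following identity in $\mathcal{R}$:
$$ch(\chi(B,\lambda^{\epsilon}))=\sum_{S\subseteq\Phi_1^+}ch(\chi(B_{ev},K_{\lambda-\pi_S})).$$
In characteristic zero, where $G_r$ degenerates, the same identity follows from a direct filtration of $\mathrm{ind}^G_B K^{\epsilon}_{\lambda}|_{G_{ev}}$ obtained from the odd affine-fibration structure of $G/B$ over $G_{ev}/B_{ev}$ via Lemma \ref{indisom}; the parity function $p$ on $X(T)$ absorbs the parity shifts in both cases.

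To finish, I would insert the classical Weyl character formula
$$ch(\chi(B_{ev},K_{\mu}))=\frac{\sum_{u\in W}(-1)^ue^{u.\mu}}{\prod_{\alpha\in\Phi_0^+}(1-e^{-\alpha})},$$
which holds at the Grothendieck-ring level in arbitrary characteristic (Euler--Poincar\'e on the projective smooth variety $G_{ev}/B_{ev}$). Applied with $\mu=\lambda-\pi_S$ and using $u.(\lambda-\pi_S)=u.\lambda-u\pi_S$ together with the crucial observation that $W=S_m\times S_n$ permutes $\Phi_1^+$ (so that $S\mapsto u(S)$ is for each $u$ a bijection of subsets of $\Phi_1^+$ and $u\pi_S=\pi_{u(S)}$), I obtain
$$\sum_S\sum_u(-1)^ue^{u.\lambda-u\pi_S}=\sum_u(-1)^ue^{u.\lambda}\sum_Se^{-u\pi_S}=\Bigl(\sum_u(-1)^ue^{u.\lambda}\Bigr)\prod_{\alpha\in\Phi_1^+}(1+e^{-\alpha}),$$
which upon dividing by $\prod_{\alpha\in\Phi_0^+}(1-e^{-\alpha})$ gives the desired formula. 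The main obstacle I expect is the bookkeeping of parity in passing from the filtration to the character identity (so that the odd factor enters with a plus sign) together with the characteristic-zero variant of the filtration, for which one cannot invoke the Frobenius kernel $G_r$ and must work directly with the odd-fiber structure of $G/B$ over $G_{ev}/B_{ev}$.
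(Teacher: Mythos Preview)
Your reduction to a standard Borel via Proposition~\ref{adjuacentforatypical} matches the paper's first step, but from there your route diverges from the paper's. The paper does \emph{not} invoke Corollary~\ref{reductiontoeven} or the Frobenius-kernel filtration here. Instead, for a standard $B=B^-_{w_0}$ it takes the parabolic $P=P(\Pi_0)=G_{ev}B$ together with the purely odd unipotent $U$ (upper-right block), so that multiplication $U\times P\to G$ is an isomorphism of superschemes. Since $G/P$ is then affine, one gets $H^k(G/B,K^{\epsilon}_{\lambda})\simeq \mathrm{ind}^G_P\,H^k(P/B,K^{\epsilon}_{\lambda})\simeq H^k(P/B,K^{\epsilon}_{\lambda})\otimes K[U]$ by Lemma~\ref{indisom}, with $ch(K[U])=\prod_{\alpha\in\Phi_1^+}(1+e^{-\alpha})$; and Theorem~\ref{simpleimprim} (applied to $G_{ev}\to P/B$) identifies $H^k(P/B,K^{\epsilon}_{\lambda})|_{G_{ev}}$ with $H^k(G_{ev}/B_{ev},K_{\lambda})$. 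This yields the product factorization of the character directly and is characteristic-free; no sum over subsets $S\subseteq\Phi_1^+$ and no use of the $W$-stability of $\Phi_1^+$ is needed.

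Your Frobenius-kernel argument is correct when $\mathrm{char}\,K=p>0$, modulo one misattribution: the passage from $H^k(G_{ev}/B_{ev}G_{ev,r},\mathrm{ind}^{B_{ev}G_{ev,r}}_{B_{ev}}K_{\lambda-\pi_S})$ to $H^k(G_{ev}/B_{ev},K_{\lambda-\pi_S})$ is not Lemma~\ref{restrictiontofactor} but the degeneration of the Grothendieck spectral sequence coming from exactness of $\mathrm{ind}^{B_{ev}G_{ev,r}}_{B_{ev}}$ (as used in Theorem~\ref{advancedKempf}). The real gap is characteristic zero. There is no $G_r$ to invoke, and your fallback to ``a direct filtration of $\mathrm{ind}^G_B K^{\epsilon}_{\lambda}|_{G_{ev}}$ obtained from the odd affine-fibration structure of $G/B$ over $G_{ev}/B_{ev}$ via Lemma~\ref{indisom}'' does not go through: $G/B$ is not affine, Lemma~\ref{indisom} does not apply globally, and making this precise amounts to proving that $G/B$ is locally decomposable over $G_{ev}/B_{ev}$---precisely the statement the paper declares it wishes to avoid (see the Introduction). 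The paper's parabolic factorization bypasses this difficulty entirely and works uniformly in all characteristics.
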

\begin{proof}
Observe that the above formula is invariant with respect to any passage from $B$ to an odd adjuacent $B'$.
In particular, Proposition \ref{adjuacentforatypical} reduces the general case to the case when $B$ is standard, say $B=B^-_{w_0}$. Then $B\leq P=P(\Pi_0)=Stab_G(\sum_{m+1\leq i\leq m+n}Kv_i)$. 
Denote by $U$ the unipotent supersubgroup that consists of all matrices of the form
$$\left(\begin{array}{cc}
E_m & * \\
0 & E_n
\end{array}\right). 
$$
Then the multiplication map $U\times P\to G$ induces an isomorphism of superschemes that infers 
$H^k(G/B, V)\simeq ind^G_P \ H^k(G/P, V)$ for any $B$-supermodule $V$ (cf. \cite{zub2}).
By Lemma \ref{indisom} $ind^G_P \ H^k(P/B, V)\simeq H^k(P/B, V)\otimes K[U]$ and the maximal torus $T$ acts on $U$ by
conjugations so that $ch(K[U])=\prod_{\alpha\in\Phi^+_1}(1+e^{-\alpha})$. 
Since $P=G_{ev}B$, by Theorem \ref{simpleimprim} we have an isomorphism of $G_{ev}$-modules $H^k(P/B, V)|_{G_{ev}}\simeq
H^k(G_{ev}/B_{ev}, V|_{B_{ev}})$. By Proposition II.5.10, \cite{jan}, it remains to verify that 
$$\prod_{\alpha\in\Phi_0^+}(e^{\alpha/2}-e^{-\alpha/2})=\sum_{u\in W}(-1)^{u}e^{u\rho_0}.
$$
We leave this elementary exercise for the reader.
\end{proof}
The following proposition is a partial generalization of Kempf's vanishing theorem (cf. \cite{jan}). 
\begin{pr}\label{Kempf}
Let $\lambda$ be a typical weight. If $(\lambda+\rho, \beta^{\vee})\geq 1$ for any $\beta\in\Phi^+_0$, then $H^k(G/B, K^{\epsilon}_{\lambda})=0$ for any $k\geq 1$.
\end{pr}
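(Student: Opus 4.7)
The plan is to follow the Frobenius-kernel approach sketched in the Introduction, which avoids relying on local decomposability of $G/B$ itself. Suppose first that $\operatorname{char} K = p > 0$; the characteristic zero case will be handled separately at the end.

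First, I would apply Corollary \ref{reductiontoeven} with $H = B$, giving
\[
H^k(G/B, K^\epsilon_\lambda)|_{G_{ev}} \simeq H^k\bigl(G_{ev}/B_{ev}G_{ev,r},\; (ind^{BG_r}_B K^\epsilon_\lambda)|_{B_{ev}G_{ev,r}}\bigr),
\]
reducing the problem to a purely even-group statement about an induced supermodule. Next, I would verify (this is what the Introduction claims is easy) that the sheaf quotient $GB_r/B$ is an affine decomposable superscheme, so that Brundan's Theorem 2.7 from \cite{br} applies to the induction functor $ind^{BG_r}_B$. This yields a finite filtration on $(ind^{BG_r}_B K^\epsilon_\lambda)|_{B_{ev}G_{ev,r}}$ whose successive quotients are $ind^{B_{ev}G_{ev,r}}_{B_{ev}} K_{\lambda-\pi}$ (with appropriate parity), where $\pi$ runs over sums of pairwise distinct elements of $\Phi^+_1$.

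Now I would use the long exact sequences coming from this filtration, iteratively, to reduce the desired vanishing to the single claim that, for every admissible $\pi$ and every $k \geq 1$,
\[
H^k\bigl(G_{ev}/B_{ev}G_{ev,r},\; ind^{B_{ev}G_{ev,r}}_{B_{ev}} K_{\lambda-\pi}\bigr) = 0.
\]
Since $G_{ev,r}/B_{ev,r}$ is finite (hence affine), the inner induction is exact, so by transitivity of induction this cohomology is isomorphic to $H^k(G_{ev}/B_{ev}, K_{\lambda-\pi})$. At this point I would invoke the classical Kempf vanishing theorem on the reductive group $G_{ev} = GL(m) \times GL(n)$: the groups on the right vanish for $k \geq 1$ provided $\lambda - \pi$ is $B_{ev}$-dominant.

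The main obstacle is the remaining combinatorial check: one must verify $(\lambda - \pi, \beta^\vee) \geq 0$ for every simple even positive $\beta$ and every admissible $\pi$. A naive bound on $(\pi, \beta^\vee)$ gives values as large as $n$ or $m$, so the assumption $(\lambda+\rho,\beta^\vee) \geq 1$ alone only yields $(\lambda, \beta^\vee) \geq 0$ and is not by itself strong enough; the typicality hypothesis on $\lambda$ must intervene, either to rule out the problematic $\pi$'s from the filtration, to combine the cohomology of consecutive successive quotients, or to replace $B$ by a carefully chosen adjacent Borel using Proposition \ref{adjuacentfortypical} and Corollary \ref{adjuacentviamany} where the dominance estimate becomes more favorable. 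Pinning down how typicality produces the required uniform dominance is the delicate heart of the argument.

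Finally, for $\operatorname{char} K = 0$, no Frobenius-kernel machinery is needed: the hypothesis $(\lambda+\rho, \beta^\vee) \geq 1$ for every $\beta \in \Phi^+_0$ places $\lambda$ in $C_{\mathbb{Z}}$, and typicality is given, so Theorem \ref{BorelBottWeylpPenkov} applied with $u = id$ (so that $l(u) = 0$) directly yields $H^k(G/B, K^\epsilon_\lambda) = 0$ for all $k \geq 1$.
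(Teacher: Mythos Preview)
Your characteristic-zero argument via Theorem \ref{BorelBottWeylpPenkov} is fine. In positive characteristic, however, there is a genuine gap which you yourself identify but do not close: the Frobenius-kernel filtration reduces the question to classical Kempf vanishing for all the weights $\lambda-\pi$, and the hypothesis $(\lambda+\rho,\beta^{\vee})\geq 1$ is simply too weak to force $(\lambda-\pi,\beta^{\vee})\geq 0$ for every sum $\pi$ of distinct odd positive roots. Typicality does \emph{not} rescue this directly: it neither removes bad $\pi$'s from the filtration nor produces cancellations between successive quotients. The Frobenius-kernel strategy you outline is exactly the proof of Theorem \ref{advancedKempf}, and that theorem genuinely requires the stronger bounds $(\lambda,\beta_i^{\vee})\geq k_i$ to make the dominance check go through; with only the present hypothesis the argument stalls at precisely the point you flag.

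The paper's proof takes a different and much shorter route. Typicality is used only once, at the very start, to move the problem: by Corollary \ref{adjuacentviamany} one may replace $B$ by a standard Borel $B'$ (indeed, after an inner automorphism, by $B^-_{id}$), and since $\lambda+\rho=\lambda'+\rho'$ while $\Phi_0^+$ is unchanged, the hypothesis $(\lambda+\rho,\beta^{\vee})\geq 1$ survives the passage. For the standard Borel the result is then quoted from \cite{zub2}, Theorem 5.1. The point you are missing is that for $B=B^-_{id}$ one has the parabolic $P=P(\Pi_0)$ with $G/P$ affine (cf.\ the proof of Proposition \ref{characterformula}), so $H^k(G/B,K^{\epsilon}_{\lambda})\simeq ind^G_P\, H^k(P/B,K^{\epsilon}_{\lambda})$ and, via Theorem \ref{simpleimprim}, $H^k(P/B,K^{\epsilon}_{\lambda})|_{G_{ev}}\simeq H^k(G_{ev}/B_{ev},K_{\lambda})$; classical Kempf vanishing then applies to the single weight $\lambda$, with no filtration and no auxiliary $\pi$'s at all.
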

\begin{proof}
Arguing as in Theorem \ref{BorelBottWeylpPenkov} one can assume that $B$ is standard and even more, up to some inner automorphism of $G$, one can assume that $B=B^-_{id}$. It remains to refer to Theorem 5.1 from \cite{zub2}. 
\end{proof}
\begin{cor}\label{characterofinduced}
If $\lambda$ satisfies the conditions of the above proposition, then
$$ch(H^0(G/B, K^{\epsilon}_{\lambda}))=\frac{\prod_{\alpha\in\Phi^+_1}(1+e^{-\alpha})}{\prod_{\alpha\in\Phi^+_0}(1-e^{-\alpha})}
\sum_{u\in W}(-1)^ue^{u.\lambda}$$
\end{cor}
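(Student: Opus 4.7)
The plan is to deduce this corollary immediately from the two preceding results: Proposition \ref{Kempf} (the partial Kempf vanishing) and Proposition \ref{characterformula} (the characteristic free Weyl character formula for $\chi(B, \lambda^{\epsilon})$). Concretely, under the stated hypotheses on $\lambda$, Proposition \ref{Kempf} gives $H^k(G/B, K^{\epsilon}_{\lambda}) = 0$ for all $k \geq 1$, so the alternating sum defining the Euler characteristic collapses to a single term:
$$\chi(B, \lambda^{\epsilon}) \;=\; \sum_{k\geq 0}(-1)^k [H^k(G/B, K^{\epsilon}_{\lambda})] \;=\; [H^0(G/B, K^{\epsilon}_{\lambda})]$$
in the Grothendieck ring $\mathcal{K}(G)$.

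Next I would apply the ring homomorphism $ch$ to both sides. Because $ch$ is well-defined and additive on short exact sequences (and $H^0$ lies in $\mathcal{F}$ up to a parity shift that matches $\epsilon$, so the character is unambiguous), we get $ch(H^0(G/B, K^{\epsilon}_{\lambda})) = ch(\chi(B, \lambda^{\epsilon}))$. Substituting the right-hand side of Proposition \ref{characterformula} then yields exactly the claimed formula.

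There is essentially no obstacle here; the only thing to double-check is that the hypotheses of Proposition \ref{Kempf} are precisely what is needed to annihilate all $H^k$ with $k\geq 1$, so that the passage from the virtual character $ch(\chi(B, \lambda^{\epsilon}))$ to the honest character $ch(H^0(G/B, K^{\epsilon}_{\lambda}))$ is legitimate. Everything else is a direct substitution, so the corollary amounts to a one-line argument combining Proposition \ref{Kempf} and Proposition \ref{characterformula}.
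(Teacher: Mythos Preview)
Your proposal is correct and matches the paper's intended argument: the corollary is stated without proof precisely because it follows immediately by combining Proposition~\ref{Kempf} (vanishing of $H^k$ for $k\geq 1$) with Proposition~\ref{characterformula}, exactly as you describe. The same one-line reasoning is made explicit later in the proof of Theorem~\ref{advancedKempf}, where the paper writes ``The second statement follows by Proposition~\ref{characterformula}.''
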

\begin{lm}\label{characters}
The following alternatives hold :
\begin{enumerate}
\item If $char K=0$, then $X(G)$ is generated by $Ber(C)$. 
\item If $char K=p>0$, then $X(G)$ is generated by $Ber(C)$ and $\det(C_{11})^p$.
\end{enumerate}
\end{lm}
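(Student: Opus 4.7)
The plan is to reduce the computation of $X(G)$ to $G_{ev}$ via restriction, and then to identify precisely which characters of $G_{ev}$ extend to $G$. Since the standard maximal torus $T$ lies in $G_{ev}=GL(m)\times GL(n)$ and characters are trivial on all (even and odd) root subgroups, the composition $X(G)\to X(G_{ev})\to X(T)$ is injective; the classical description of $X(GL(m)\times GL(n))$ then shows that each $\chi\in X(G)$ corresponds to a unique pair $(a,b)\in\mathbb{Z}^2$ with $\chi|_{G_{ev}}=\det(C_{00})^a\det(C_{11})^b$. Since $Ber(C)|_{G_{ev}}=\det(C_{00})\det(C_{11})^{-1}$, identifying $X(G)$ reduces to determining the admissible pairs $(a,b)$; the target statement is $a+b=0$ when $char K=0$, and $a+b\equiv 0\pmod p$ when $char K=p>0$.

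To extract this constraint I would exploit commutators of opposite odd root subgroups. For indices $1\leq i\leq m<m+j\leq m+n$, arbitrary $A\in\mathsf{SAlg}_K$, and odd $a,b\in A_1$, a direct block calculation using $a^2=b^2=0$ gives
$$[I+aE_{i,m+j},\,I+bE_{m+j,i}]=I+ab(E_{ii}+E_{m+j,m+j})\in T(A).$$
Since the induced map $G(A)\to A^\times$ is a homomorphism into an abelian group, it vanishes on this commutator, and evaluating via the $G_{ev}$-formula produces $(1+ab)^{a+b}=1$. Specializing to $A=K[\xi,\eta]/(\xi^2,\eta^2)$ with odd $\xi,\eta$ and $a=\xi,b=\eta$, the vanishing $(\xi\eta)^2=0$ collapses this to $1+(a+b)\xi\eta=1$, forcing $(a+b)\cdot 1_K=0$, which is exactly the required congruence. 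Sufficiency then follows at once by writing $\chi|_{G_{ev}}=Ber^a\det(C_{11})^{a+b}|_{G_{ev}}$ and appealing to the injectivity established above.

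The remaining ingredient, and the main obstacle, is to verify that $\det(C_{11})^p$ itself is group-like in $K[G]$ when $char K=p>0$. I would compute $\Delta(\det(C_{11}))=\det(M+N)$ in the matrix algebra over $K[G]\otimes K[G]$, where
$$M_{ij}=\sum_{k=1}^{m}c_{m+i,k}\otimes c_{k,m+j},\qquad N_{ij}=\sum_{k=1}^{n}c_{m+i,m+k}\otimes c_{m+k,m+j}$$
both have even entries. Each summand of $M_{ij}$ has the form $x\otimes y$ with $x,y$ odd, so squares to zero via $(x\otimes y)^2=-x^2\otimes y^2=0$. In characteristic $p$ the Frobenius identity $\det(X)^p=\det(X^{(p)})$, combined with $(u+v)^p=u^p+v^p$ for commuting $u,v$, then produces $M_{ij}^p=0$ and identifies $N^{(p)}$ with the matrix product $(C_{11}^{(p)}\otimes 1)\cdot(1\otimes C_{11}^{(p)})$, yielding $\Delta(\det(C_{11})^p)=\det(C_{11})^p\otimes\det(C_{11})^p$. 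This interplay between Frobenius and super-commutativity is the hardest step; assembly of the lemma from these pieces, and the collapse of the characteristic-zero statement as a degenerate specialization, is then routine.
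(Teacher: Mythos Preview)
Your argument is correct and genuinely different from the paper's. The paper treats a group-like $f$ as the structure map of a one-dimensional simple $G$-supermodule, reads off its weight $\lambda$, and then invokes outside references (on supersymmetric invariants) for the key constraint $p\mid(\lambda_i+\lambda_j)$ for $1\le i\le m<j\le m+n$; it also asserts without comment that $Ber(C)^a\det(C_{11})^{pt}$ is group-like and concludes by highest-weight uniqueness. Your route is more elementary and self-contained: you obtain the constraint $p\mid(a+b)$ directly from the explicit commutator $[I+\xi E_{i,m+j},\,I+\eta E_{m+j,i}]=I+\xi\eta(E_{ii}+E_{m+j,m+j})\in T(A)$, and you actually verify that $\det(C_{11})^p$ is group-like via the Frobenius identity $\det(X)^p=\det(X^{(p)})$ together with the observation that each odd$\otimes$odd summand of $M_{ij}$ squares to zero. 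What you gain is independence from the cited literature and an explicit reason why the $p$-th power of $\det(C_{11})$ (but not $\det(C_{11})$ itself) lifts to $G$; what the paper's approach buys is brevity, at the cost of relying on those external results. Two cosmetic points: you use the symbols $a,b$ both for the integer exponents in $\det(C_{00})^a\det(C_{11})^b$ and for the odd parameters in the commutator computation, which should be disentangled; and your nilpotence argument for $M_{ij}$ tacitly uses that odd generators square to zero, so you are implicitly assuming $char\,K\neq 2$ (which is the standard convention in this setting, and is also implicit in the paper).
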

\begin{proof}
Consider an $f\in X(G)$. It is clear that $f$ is an invariant with respect to the adjoint (co)action $\nu_l$.
Let $V$ be an one dimensional (simple) $G$-supermodule that corresponds to $f$. We have $ch(V)=e^{\lambda}$, where
$\lambda=f|_T\in X(T)$. Moreover, $p|(\lambda_i+\lambda_j)$ for all pairs $i, j$ such that $1\leq i\leq m<j\leq m+n$
(cf. \cite{grmarzub, ktr, stem}).
In other words, $\lambda=(\underbrace{a,\ldots, a}_m , \underbrace{-a+pt,\ldots , -a+pt}_n)$. It remains to observe that
the group-like element $g=Ber(C)^a\det(C_{11})^{pt}$ defines a simple $G$-supermodule of the same highest weight.
Thus $f=g$.  
\end{proof}

\section{Kempf's vanishing theorem for arbitrary weights}

\begin{lm}\label{distmap}
For any $r\geq 1,$ $m : B_r\times U^+_r\to G_r$ is an isomorphism of superschemes. 
\end{lm}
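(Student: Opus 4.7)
My plan is to deduce the lemma from the super big cell decomposition of $G$. The multiplication morphism $U^-_w\times T\times U^+_w\to G$ is an open immersion of superschemes; for $G=GL(m|n)$ this can be seen directly from the block form of the generic matrix $C$ by means of a super $LDU$-factorization valid on the open locus where the leading principal minors of the even blocks $C_{00}$ and $C_{11}$ are invertible. Since $B=B^-_w$ is the semidirect product $U^-_w\rtimes T$, restricting yields an open immersion $m:B\times U^+_w\hookrightarrow G$ whose image is open in $G$ and contains the identity.

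Next I would pass to Frobenius kernels. The subsuperscheme $G_r=\ker f_r$ is infinitesimal: its coordinate superalgebra $K[G_r]$ is finite-dimensional and local with nilpotent augmentation ideal, so its underlying topological space is the single point $\{\epsilon_G\}$. Consequently, any open subsuperscheme of $G$ containing the identity contains all of $G_r$, and in particular the image of $m$ swallows $G_r$. Pulling $G_r$ back along the open immersion $m$, and using that taking $r$-th Frobenius kernels commutes with finite products (the Frobenius morphism on $B\times U^+_w$ being the product of the Frobenius morphisms on the two factors), one finds $m^{-1}(G_r)=B_r\times U^+_r$. Hence $m$ restricts to the claimed isomorphism $B_r\times U^+_r\xrightarrow{\sim}G_r$.

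The main delicate point is verifying the two compatibilities invoked above in the super setting: that the big cell is genuinely an open subsuperscheme (i.e., that the comorphism $K[G]_f\to K[B]\otimes K[U^+_w]$ is an isomorphism of superalgebras for a suitable even invertible $f\in K[G]_0$), and that Frobenius kernels behave well under open immersions. A cleaner alternative that avoids these geometric verifications is to pass to the dual side: since $B_r$, $U^+_r$ and $G_r$ are all affine with nilpotent augmentation ideals, their coordinate superalgebras are finite-dimensional, and $m$ is an isomorphism if and only if $dm:\mathsf{Dist}(B_r)\otimes\mathsf{Dist}(U^+_r)\to\mathsf{Dist}(G_r)$ is a superspace isomorphism. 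This can be checked from a super-PBW basis for $\mathsf{Dist}(G)$ adapted to the decomposition $\mathfrak{g}=\mathfrak{b}\oplus\mathfrak{u}^+$ and truncated at height $p^r$ on the divided-power (even) part, the odd root generators being automatically nilpotent of order two. Either route gives the desired isomorphism.
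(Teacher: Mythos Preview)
Your proposal is correct and in fact contains the paper's own argument as its second alternative. The paper proceeds exactly along your ``cleaner alternative'': since $\ker\epsilon_{G_r}$ and the corresponding ideal on the product side are nilpotent, one identifies $dm$ with the linear dual of $m^*$, and then checks that $dm:\mathsf{Dist}(B_r)\otimes\mathsf{Dist}(U^+_r)\to\mathsf{Dist}(G_r)$ is an isomorphism by writing down the explicit PBW-type bases (divided powers $E_{ks}^{(m_{ks})}$ for even roots with exponents $<p^r$, binomials $\binom{E_{ii}}{t_i}$ for the torus, and $E_{ks}^{\eta_{ks}}$ with $\eta_{ks}\in\{0,1\}$ for odd roots). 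So on that branch you and the paper coincide.

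Your first route, via the super big cell, is a genuinely different argument. It trades the algebraic PBW computation for the geometric input that $m:B\times U^+\hookrightarrow G$ is an open immersion containing the identity, together with the observation that infinitesimal subschemes factor through any open containing their support. What this buys is conceptual transparency and independence from explicit bases; the price, as you note, is that one must verify in the super setting that the big cell really is an open subsuperscheme and that $m^{-1}(G_r)=B_r\times U^+_r$ (the latter following from naturality of the Frobenius and the fact that $m^*$ respects augmentation ideals). The paper's distribution argument avoids these geometric checks at the cost of an explicit basis computation, which in turn is readily available from the literature it cites. Either argument is complete once the relevant background is in place.
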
 
\begin{proof}
Since $\ker\epsilon_{G_r}$ and $K[B_r]\otimes \ker\epsilon_{U^+_r}+\ker\epsilon_{B_r}\otimes K[U^+_r]$ are nilpotent,
the morphism of superspaces $dm : \mathsf{Dist}(B_r)\otimes \mathsf{Dist}(U^+_r)\to 
\mathsf{Dist}(G_r)$, induced by the multiplication of $\mathsf{Dist}(G_r)$, can be identified with 
$K[B_r\times U^+_r]^*\to K[G_r]^*$. It remains to prove that $dm$ is an isomorphism.

Arguing as in \cite{shuw}, we see that $\mathsf{Dist}(G_r), \mathsf{Dist}(B_r)$ and $\mathsf{Dist}(U^+_r)$ have the basises 
$$\prod_{\epsilon_k-\epsilon_s\in\Phi_0^-} E_{ks}^{(m_{ks})} \prod_{\epsilon_k-\epsilon_s\in\Phi_1^-} E_{ks}^{\eta_{ks}}\prod_{1\leq i\leq m+n}\left(\begin{array}{c}
E_{ii} \\
t_i
\end{array}\right)\prod_{\epsilon_k-\epsilon_s\in\Phi_0^+} E_{ks}^{(n_{ks})} \prod_{\epsilon_k-\epsilon_s\in\Phi_1^+} E_{ks}^{\theta_{ks}},
$$
$$\prod_{\epsilon_k-\epsilon_s\in\Phi_0^-} E_{ks}^{(m_{ks})} \prod_{\epsilon_k-\epsilon_s\in\Phi_1^-} E_{ks}^{\eta_{ks}}\prod_{1\leq i\leq m+n}\left(\begin{array}{c}
E_{ii} \\
t_i
\end{array}\right)
$$
and
$$\prod_{\epsilon_k-\epsilon_s\in\Phi_0^+} E_{ks}^{(n_{ks})} \prod_{\epsilon_k-\epsilon_s\in\Phi_1^+} E_{ks}^{\theta_{ks}},
$$
respectively, 
where $0\leq t_i, 0\leq n_{ks}, m_{ks} < p^r, \eta_{ks}, \theta_{ks}\in \{0, 1\}$. Thus our statement obviously follows.  
\end{proof}
\begin{lm}\label{moredistmap}
For any $r\geq 1,$ $m : B\times U^+_r\to BG_r$ is an isomorphism of superschemes.
\end{lm}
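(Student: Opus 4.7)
The plan is to lift Lemma \ref{distmap} from the purely infinitesimal setting to this mixed one by a functor-of-points argument, using the description of $BG_r$ as the fppf sheafification of $A\mapsto B(A)G_r(A)$. The distribution-algebra trick used in the proof of Lemma \ref{distmap} is not directly available here, since $B$ is not infinitesimal and $\mathsf{Dist}(B)$ no longer recovers $K[B]$ by duality.

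Since $U^+_r\subseteq G_r$ and $B\subseteq BG_r$, the multiplication morphism $m:B\times U^+_r\to G$ factors through the closed subsupergroup $BG_r$. I would then show that the induced morphism $m:B\times U^+_r\to BG_r$ is an isomorphism by constructing an explicit inverse. Apply Lemma \ref{distmap} to obtain the isomorphism $\phi:B_r\times U^+_r\xrightarrow{\sim}G_r$, and consider the composite
\[ B\times G_r\xrightarrow{\,1\times\phi^{-1}\,} B\times B_r\times U^+_r\xrightarrow{\,(\mu,\mathrm{pr}_3)\,} B\times U^+_r, \]
where $\mu:B\times B_r\to B$ is the restriction of multiplication on $B$. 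A direct computation shows this composite is invariant under the free right $B_r$-action $(b,g)\cdot b_r=(bb_r,\,b_r^{-1}g)$ on $B\times G_r$, and that the multiplication $B\times G_r\to BG_r$ is the fppf quotient by precisely this action. Hence the composite descends to a morphism $BG_r\to B\times U^+_r$ which is visibly inverse to $m$.

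As a consistency check (and also as the main technical hurdle), one can verify the resulting bijection on $A$-points directly: injectivity follows from $B\cap U^+_r\subseteq B\cap G_r=B_r$ combined with $B_r\cap U^+_r=1$ supplied by Lemma \ref{distmap}, while surjectivity follows because any fppf-local section $bg$ of $BG_r$ rewrites as $(bb')u^+$ via the decomposition $g=b'u^+$ from Lemma \ref{distmap}. The delicate point throughout is ensuring that the sheafified quotient $(B\times G_r)/B_r$ really does coincide with $BG_r$ as a superscheme, and that the $B_r$-action on $B\times G_r$ is free with this as its fppf quotient; once this is in hand, the descent argument above completes the proof.
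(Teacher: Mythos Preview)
Your proposal is correct, but the paper's proof is considerably shorter and avoids the quotient/descent machinery entirely. The paper argues in three lines: since $B\cap U^+_r=1$, the multiplication $m$ identifies $B\times U^+_r$ with a subfunctor of $BG_r$; by Lemma~\ref{distmap} one has $G_r(A)=B_r(A)U^+_r(A)$, so the naive presheaf $A\mapsto B(A)G_r(A)$ equals $A\mapsto B(A)U^+_r(A)$, and therefore $BG_r$ is the sheafification of the image of $B\times U^+_r$; finally, since $B\times U^+_r$ is an affine superscheme it is already an fppf sheaf, so this sheafification is $B\times U^+_r$ itself.

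Your ``consistency check'' paragraph is essentially this argument, but you stop short of invoking the key fact that an affine superscheme coincides with its own sheafification. That single observation is what lets the paper bypass your main construction: once injectivity on points and equality of the underlying presheaves are known, there is nothing left to descend. Your explicit inverse via $(B\times G_r)/B_r$ works, but the ``delicate point'' you flag---that the multiplication $B\times G_r\to BG_r$ realizes $BG_r$ as the fppf quotient by the free $B_r$-action---is itself at least as much work as the whole paper's proof, and is unnecessary here.
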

\begin{proof}
Since $U^+_r\bigcap B=1$, one can identify $B\times U^+_r$ with a subfunctor of a $K$-sheaf $BG_r$. By Lemma \ref{distmap} 
$BG_r$ is a sheafification of $B\times U^+_r$. Since $B\times U^+_r$ is an affine superscheme, its sheafification coincides with itself.
(cf. \cite{zub1}, p.721, or \cite{jan}, p.68).
\end{proof}
Lemma \ref{moredistmap} also infers that $(BG_r)_{ev}=B_{ev} G_{ev, r}\simeq B_{ev}\times U^+_{ev, r}$. Moreover, by
Corollary \ref{reductiontoeven} we have an isomorphism 
$$H^k(G/B, K^{\epsilon}_{\lambda})|_{G_{ev}}\simeq H^k(G_{ev}/B_{ev}G_{ev, r}, (ind^{BG_r}_B K^{\epsilon}_{\lambda})|_{B_{ev}G_{ev, r}}).$$ Combining Lemma \ref{indisom} with Lemma \ref{moredistmap} we see that
$ind^{BG_r}_B K^{\epsilon}_{\lambda}\simeq K^{\epsilon}_{\lambda}\otimes K[U^+_r]$.
\begin{pr}\label{filtration}
The $B_{ev}G_{ev, r}$-module $ind^{BG_r}_B K^{\epsilon}_{\lambda}$ has a filtration with quotients
that are isomorphic to $ind^{B_{ev}G_{ev, r}}_{B_{ev}} K_{\lambda-\pi}$, where $\pi$ runs over sums of roots $\alpha\in\Phi_1^+$ without repetitions.
\end{pr}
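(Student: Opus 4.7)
The plan is to reduce the proposition to a direct application of Brundan's transfer theorem (\cite{br}, Theorem 2.7), whose hypothesis requires the sheaf quotient $BG_r/B$ to be an affine decomposable superscheme. Once decomposability is verified, the filtration of the restricted induced module with quotients of the stated form follows immediately from Brundan's result.

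First I would identify $BG_r/B$ with $U^+_r$ as a superscheme. Lemma \ref{moredistmap} provides a $B$-equivariant isomorphism of superschemes $m : B \times U^+_r \to BG_r$ for the natural left $B$-actions, so passing to the quotient gives $BG_r/B \simeq U^+_r$. It therefore suffices to show that $U^+_r$ is decomposable, that is, isomorphic as a superscheme to $(U^+_r)_{ev} \times \mathsf{A}^{0|N}$ for some $N$.

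For this, fix any total ordering on $\Phi^+$ placing even roots before odd ones, and argue as in the proof of Lemma \ref{distmap} by comparing bases of distribution algebras: the product morphism
\begin{equation*}
\prod_{\alpha \in \Phi_0^+} U_{\alpha, r}\ \times\ \prod_{\alpha \in \Phi_1^+} U_\alpha \ \longrightarrow \ U^+_r
\end{equation*}
is then an isomorphism of superschemes. For odd $\alpha$ one has $U_\alpha \simeq \mathsf{A}^{0|1}$, which coincides with its own Frobenius kernels, so no infinitesimal truncation is needed on the odd factors. Reading off coordinate superalgebras yields an isomorphism $K[U^+_r] \simeq K[U^+_{ev,r}] \otimes K[x_\alpha | \alpha \in \Phi_1^+]$, the right-hand tensor factor being the Grassmann algebra on odd generators $x_\alpha$ of $T$-weight $-\alpha$; hence $U^+_r \simeq U^+_{ev,r} \times \mathsf{A}^{0|N}$ with $N = |\Phi_1^+|$.

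With decomposability established, an application of Brundan's Theorem 2.7 to the embedding $B \leq BG_r$ and the one-dimensional $B$-supermodule $K^\epsilon_\lambda$ produces the desired filtration of $ind^{BG_r}_B K^\epsilon_\lambda$, viewed as a $B_{ev}G_{ev,r}$-supermodule, with quotients $ind^{B_{ev}G_{ev,r}}_{B_{ev}} K_{\lambda - \pi}$ indexed by subsets $S \subseteq \Phi_1^+$ via $\pi = \sum_{\alpha \in S} \alpha$ (the parity shift by $|S|$ being absorbed into the statement). The only genuinely nontrivial step is the decomposability verification; once that is in place, the cited theorem handles all of the remaining bookkeeping.
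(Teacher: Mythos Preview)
Your proposal is correct and follows essentially the same route as the paper: identify $BG_r/B$ with $U^+_r$ via Lemma~\ref{moredistmap}, check decomposability, and invoke Brundan's Theorem~2.7. The only difference is that you spell out the decomposability of $U^+_r$ in detail (via the product of root subgroups and a distribution-algebra comparison), whereas the paper simply declares $BG_r/B\simeq SSp\ K[U^+_r]$ to be ``obviously decomposable'' and notes the conjugation action of $T$ before applying the same theorem.
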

\begin{proof}
The affine superscheme $BG_r/B\simeq SSp \ K[U^+_r]$ is obviously decomposable. Since the torus $T$ acts on $U^+_r$ by
conjugations, our statement immediately follows by Thorem 2.7, \cite{br}. 
\end{proof}
\begin{lm}\label{complex}
Let $C^{\bullet}$ be a complex consisting of free $\mathbb{Z}$-modules. If for some $k\geq 0$ there is a field $K$ of zero characteristic such that $H^k(C^{\bullet}\otimes_{\mathbb{Z}} K)\neq 0$, then
$H^k(C^{\bullet}\otimes_{\mathbb{Z}} F)\neq 0$ for any field $F$. 
\end{lm}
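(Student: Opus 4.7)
The plan is to apply the universal coefficient theorem (UCT) to $C^{\bullet}$. Since each $C^k$ is a free $\mathbb{Z}$-module, for every abelian group $A$ there is a natural short exact sequence
$$0\to H^k(C^{\bullet})\otimes_{\mathbb{Z}} A\to H^k(C^{\bullet}\otimes_{\mathbb{Z}} A)\to \operatorname{Tor}_1^{\mathbb{Z}}(H^{k+1}(C^{\bullet}),A)\to 0.$$
Write $M:=H^k(C^{\bullet})$.

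First I will specialise to $A=K$. As a $\mathbb{Q}$-vector space, $K$ is flat over $\mathbb{Z}$, so the $\operatorname{Tor}$ term vanishes and the hypothesis $H^k(C^{\bullet}\otimes_{\mathbb{Z}}K)\neq 0$ becomes $M\otimes_{\mathbb{Z}}K\neq 0$. Rewriting $M\otimes K=(M\otimes\mathbb{Q})\otimes_{\mathbb{Q}}K$ forces $M\otimes\mathbb{Q}\neq 0$, so $M$ contains an element of infinite order.

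Next, for an arbitrary field $F$, the same UCT sequence provides an injection $M\otimes_{\mathbb{Z}}F\hookrightarrow H^k(C^{\bullet}\otimes_{\mathbb{Z}}F)$, so it suffices to check that $M\otimes F\neq 0$. Invoking the (implicit) finite generation of $M$ — automatic in the intended application, in which $C^{\bullet}$ is a bounded complex of finitely generated free $\mathbb{Z}$-modules — the structure theorem for finitely generated abelian groups gives $M\cong\mathbb{Z}^{r}\oplus T$ with $T$ torsion, and the preceding paragraph forces $r\geq 1$. Hence $M\otimes F\cong F^{r}\oplus(T\otimes F)\neq 0$, and the lemma follows.

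The main obstacle is precisely the last step: the implication "$M\otimes\mathbb{Q}\neq 0\Rightarrow M\otimes F\neq 0$ for every field $F$" is false for arbitrary $M$ — one can build, from a free $\mathbb{Z}$-resolution of $\mathbb{Q}$, a complex of free $\mathbb{Z}$-modules with $M=\mathbb{Q}$ and $H^{k+1}=0$, and then $M\otimes\mathbb{F}_p=0$. Some finiteness control on $M$ is therefore indispensable; it comes for free in the setting where the lemma is used, but should arguably be built into its statement.
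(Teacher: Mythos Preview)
Your route via the universal coefficient theorem is different from the paper's. The paper argues by hand: it splits each $C_t=\ker d_t\oplus V_t$, picks $x\in\ker d_k$ with $\mathbb{Z}x\cap\operatorname{Im} d_{k-1}=0$, passes to a finitely generated free direct summand $X\subseteq\ker d_k$ containing $x$, and then applies the elementary divisor theorem to the pair $\operatorname{Im} d_{k-1}\cap X\subseteq X$. Your UCT argument is shorter and makes the homological content transparent.

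More importantly, the gap you flag is real, and it is present in the paper's proof as well. The paper claims that $\operatorname{Im}(d_{k-1}\otimes F)\cap(X\otimes F)$ is spanned by the elements $y\otimes 1$ with $y\in\operatorname{Im} d_{k-1}\cap X$; this fails in exactly the situation you describe. Concretely, resolve $\mathbb{Q}$ by $P_1\to P_0$ with $P_0=\bigoplus_{n\geq 1}\mathbb{Z}e_n$, $e_n\mapsto 1/n!$, and take $x=e_1$, $X=\mathbb{Z}e_1$: then $\operatorname{Im} d_{k-1}\cap X=0$, yet over $\mathbb{F}_p$ one has $e_1\otimes 1\in\operatorname{Im}(d_{k-1}\otimes\mathbb{F}_p)$. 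So the lemma as stated is false, and both proofs need a finiteness hypothesis. One caveat to your final remark: in the intended application the Hochschild complex $C^{\bullet}_{\mathbb{Z}}(B,M)=\{M_{\mathbb{Z}}\otimes\mathbb{Z}[B]^{\otimes n}\}$ has terms that are \emph{not} finitely generated over $\mathbb{Z}$, so the fix is not quite ``automatic''; what one actually knows there is that $H^k(G/B,F_\lambda^\epsilon)$ is finite-dimensional over every field, and the argument should be reorganised around that.
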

\begin{proof}
Consider the fragment $C_{k-1}\stackrel{d_{k-1}}{\to} C_k\stackrel{d_k}{\to} C_{k+1}$. Since a subgroup of free abelian group is free, for any $t$ we have $C_t=\ker d_t\oplus V_t$ and $d_t$ induces an isomorphism $V_t\to  \mathsf{Im} \ d_t$. Thus $C_t\otimes_{\mathbb{Z}} F=\ker d_t\otimes_{\mathbb{Z}} F\oplus V_t\otimes_{\mathbb{Z}} F$ for any field $F$.

Notice that $K$ is a flat $\mathbb{Z}$-modue. Thus
$$H^k(C^{\bullet}\otimes_{\mathbb{Z}} K)\simeq\frac{\ker d_k}{\mathsf{Im} \ d_{k-1}}\otimes_{\mathbb{Z}} K\neq 0 ,$$ 
In other words, there is $x\in\ker d_k$ such that $\mathbb{Z}x\bigcap\mathsf{Im} d_{k-1}=0$. Vice versa, if there exists 
$x\in\ker d_k$ such that $\mathbb{Z}x\bigcap\mathsf{Im} d_{k-1}=0$, then $H^k(C^{\bullet}\otimes_{\mathbb{Z}} K)\neq 0$.

Let $X$ be a direct summand of $\ker d_k$ such that $X$ is finitely generated (free)
$\mathbb{Z}$-module and $x\in X$. Observe that if $x\otimes 1\in \mathsf{Im} \ (d_{k-1}\otimes_{\mathbb{Z}} F)$, then
$x\otimes 1\in \mathsf{Im} \ (d_{k-1}\otimes_{\mathbb{Z}} F)\bigcap X\otimes_{\mathbb{Z}} F$ and the last superspace is spanned by the elements $y\otimes 1, y\in \mathsf{Im}\ d_{k-1}\bigcap X$. Choose a basis of $X$, say $x_1, \ldots , x_t$, such that $n_1 x_1, \ldots , n_t x_t$ is a basis of $\mathsf{Im}\ d_{k-1}\bigcap X$ for some non-negative integers $n_1, \ldots , n_t$. Without loss of generality one can assume that $x_1=x$. Then $n_1=0$ and our lemma obviously follows.  
\end{proof}
\begin{rem}\label{compareofdim}
We obtain as a by-product that if $\dim_K H^k(C^{\bullet}\otimes_{\mathbb{Z}} K)<\infty$, then
$\dim_K H^k(C^{\bullet}\otimes_{\mathbb{Z}} K)\leq \dim_F H^k(C^{\bullet}\otimes_{\mathbb{Z}} F)$ for any field $F$. 
\end{rem}
\begin{pr}\label{passagefromcharptozero}
Let $char K=0$. If $H^k(G/B, K^{\epsilon}_{\lambda})\neq 0$ for some $k\geq 0$ and $\lambda$, then 
$H^k(G/B, F^{\epsilon}_{\lambda})\neq 0$ for any field $F$.
\end{pr}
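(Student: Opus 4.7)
The plan is to realize $H^k(G/B, K^{\epsilon}_{\lambda})$ as the $k$-th cohomology of a complex of \emph{free} $\mathbb{Z}$-modules that is intrinsic to the $\mathbb{Z}$-data, and then invoke Lemma \ref{complex}. First, Lemma \ref{indandfixedpoint} identifies this cohomology with $H^k(B, K^{\epsilon}_{\lambda}\otimes K[G])$, a Hochschild-type cohomology computed by the standard cobar complex $C^n = K^{\epsilon}_{\lambda}\otimes K[G]\otimes K[B]^{\otimes n}$ with differentials built from $\Delta_B$ and the $B$-coactions on $K[G]$ and $K^{\epsilon}_{\lambda}$.

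Next I would descend this construction to $\mathbb{Z}$. The paper already records in Section 6 the Hopf super-ring $\mathbb{Z}$-form $\mathbb{Z}[G]=\mathbb{Z}[c_{ij}]_d$ of $K[G]$; since $B$ is cut out by equations with integer coefficients, there is a Hopf super-ring quotient $\mathbb{Z}[B]$ which is a $\mathbb{Z}$-form of $K[B]$, and any $\lambda\in X(T)=\mathbb{Z}^{m+n}$ yields a $\mathbb{Z}$-form $\mathbb{Z}^{\epsilon}_{\lambda}$ of $K^{\epsilon}_{\lambda}$. These fit into an integral cobar complex $C^{\bullet}_{\mathbb{Z}}$ with $C^n_{\mathbb{Z}}=\mathbb{Z}^{\epsilon}_{\lambda}\otimes\mathbb{Z}[G]\otimes\mathbb{Z}[B]^{\otimes n}$ whose base change $C^{\bullet}_{\mathbb{Z}}\otimes_{\mathbb{Z}}F$ is, for every field $F$, canonically the cobar complex computing $H^{\bullet}(G_F/B_F, F^{\epsilon}_{\lambda})$. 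For Lemma \ref{complex} to apply, each $C^n_{\mathbb{Z}}$ must be $\mathbb{Z}$-free; this I would deduce from the factorization
$$\mathbb{Z}[c_{ij}]_d\cong\mathbb{Z}[GL(m)]\otimes_{\mathbb{Z}}\mathbb{Z}[GL(n)]\otimes_{\mathbb{Z}}\Lambda(C_{01},C_{10}),$$
where the first two factors are the classical Chevalley $\mathbb{Z}$-forms (known to be free $\mathbb{Z}$-modules with explicit monomial bases) and the third is a finitely generated exterior algebra over $\mathbb{Z}$. A parallel factorization using the semidirect product $B=T\ltimes U$ gives $\mathbb{Z}[B]\cong\mathbb{Z}[T]\otimes_{\mathbb{Z}}\mathbb{Z}[U]$ as $\mathbb{Z}$-modules, so $\mathbb{Z}[B]$ is $\mathbb{Z}$-free as well.

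With freeness established, the conclusion is immediate: Lemma \ref{complex} applied to $C^{\bullet}_{\mathbb{Z}}$ in degree $k$ turns the hypothesis $H^k(C^{\bullet}_{\mathbb{Z}}\otimes_{\mathbb{Z}} K)\cong H^k(G/B,K^{\epsilon}_{\lambda})\neq 0$ into the desired $H^k(C^{\bullet}_{\mathbb{Z}}\otimes_{\mathbb{Z}} F)\cong H^k(G/B,F^{\epsilon}_{\lambda})\neq 0$ for every field $F$. The main technical obstacle is precisely the $\mathbb{Z}$-freeness of the coordinate rings: Lemma \ref{complex} genuinely requires freeness (its elementary-divisor argument breaks down for merely $\mathbb{Z}$-flat modules such as $\mathbb{Q}$), so one cannot bypass this step by appealing to flatness alone. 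Fortunately the freeness is transparent in the $GL(m|n)$ setting via the factorizations above; the remaining checks that the cobar differentials are defined over $\mathbb{Z}$ and that base change produces the correct complex are routine consequences of the construction.
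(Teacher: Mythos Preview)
Your proposal is correct and follows essentially the same route as the paper: reduce via Lemma \ref{indandfixedpoint} to $H^k(B, K^{\epsilon}_{\lambda}\otimes K[G])$, compute this by the Hochschild complex, note that this complex has an integral form $\mathbb{Z}^{\epsilon}_{\lambda}\otimes\mathbb{Z}[G]\otimes\mathbb{Z}[B]^{\otimes n}$ consisting of free $\mathbb{Z}$-modules, and invoke Lemma \ref{complex}. If anything you are more explicit than the paper about why $\mathbb{Z}[G]$ and $\mathbb{Z}[B]$ are $\mathbb{Z}$-free, a point the paper simply asserts.
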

\begin{proof}
By Lemma \ref{indandfixedpoint} $H^k(G/B, F^{\epsilon}_{\lambda})\simeq H^k(B, F^{\epsilon}_{\lambda}\otimes F[G])$.
The Hopf superalgebra $F[B]$ has a $\mathbb{Z}$-form $\mathbb{Z}[B]=\mathbb{Z}[G]/I$, where a Hopf superideal $I$ is generated by the elements $c_{w(i), w(j)}, 1\leq i< j\leq m+n$. Besides, a $B$-supermodule $M=F^{\epsilon}_{\lambda}\otimes F[G]$ has a 
$\mathbb{Z}$-form $M_{\mathbb{Z}}=\mathbb{Z}^{\epsilon}_{\lambda}\otimes\mathbb{Z}[G]$. The cohomology $H^{\bullet}(B, M)$ can be computed using 
{\it Hocshchild complex} $C^{\bullet}(B, M)=\{M\otimes F[B]^{\otimes n}\}$ (cf. \cite{jan, drup}). Moreover, this complex has a $\mathbb{Z}$-form $C^{\bullet}_{\mathbb{Z}}(B, M)=\{M_{\mathbb{Z}}\otimes\mathbb{Z}[B]^n\}$ consisting of free $\mathbb{Z}$-(super)modules. The statement follows by Lemma \ref{complex}. 
\end{proof}
\begin{cor}\label{positivetozero}
If $H^k(G/B, F^{\epsilon}_{\lambda})=0$ for a field $F$, then 
$H^k(G/B, K^{\epsilon}_{\lambda})=0$ for any field $K$ of zero characteristic.
\end{cor}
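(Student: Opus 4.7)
The plan is extremely short, because Corollary \ref{positivetozero} is simply the contrapositive of the immediately preceding Proposition \ref{passagefromcharptozero}. I would argue as follows: suppose for contradiction that there exists some field $K$ of characteristic zero such that $H^k(G/B, K^{\epsilon}_{\lambda}) \neq 0$. Then Proposition \ref{passagefromcharptozero} applies and yields $H^k(G/B, F^{\epsilon}_{\lambda}) \neq 0$ for \emph{every} field $F$, which directly contradicts the hypothesis that this cohomology vanishes for the particular field $F$ in the statement. Therefore no such $K$ can exist, which is precisely the claim.

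There is genuinely no obstacle here, since all the substantive work has already been absorbed into Proposition \ref{passagefromcharptozero} (which in turn rests on the $\mathbb{Z}$-form of the Hochschild complex together with Lemma \ref{complex}). The only thing worth double-checking is that the quantifiers line up: Proposition \ref{passagefromcharptozero} delivers nonvanishing over \emph{every} field $F$ from nonvanishing over \emph{some} zero-characteristic $K$, so the contrapositive reads ``vanishing over some field $F$ $\Rightarrow$ vanishing over every zero-characteristic $K$,'' which matches the statement of the corollary verbatim. Thus the proof reduces to a single sentence invoking Proposition \ref{passagefromcharptozero}.
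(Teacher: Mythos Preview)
Your proposal is correct and matches the paper's approach: the corollary is stated without proof immediately after Proposition \ref{passagefromcharptozero}, and is indeed nothing more than its contrapositive. Your check of the quantifiers is exactly what is needed.
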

For the sake of simplicity assume that $B_{ev}=(B_{id})_{ev}$. In the notations of seventh section, $w=w_1, w_0=id$. Thus $\Phi^+_0=\{\epsilon_i-\epsilon_j | 1\leq i < j\leq m, \mbox{or} \ m+1\leq i < j\leq m+n\}$. For any positive integer $s$  
let $\underline{s}$ denote the interval $\{1, \ldots, s\}$. 

Let $\beta_i$ denote the weight $\epsilon_i-\epsilon_{i+1}$, where $i\neq m$. 
The cardinality of $\{\gamma\in\Phi^+_1|
(\gamma, \beta_i^{\vee})=1\}$ is denoted by $k_i$. 
More precisely, if $1\leq i\leq m-1$, then
$$
k_i=(w^{-1}(\underline{m+n}\setminus\underline{m})\setminus\underline{w^{-1}(i)})^{\sharp} +(\underline{w^{-1}(i+1)}\bigcap w^{-1}(\underline{m+n}\setminus\underline{m}))^{\sharp}=$$
$$n+(w^{-1}(\underline{m+n}\setminus\underline{m})\bigcap(\underline{w^{-1}(i+1)}\setminus\underline{w^{-1}(i)}))^{\sharp} .
$$
Finally, if $m+1\leq i\leq m+n-1$, then 
$$
k_i=(\underline{w^{-1}(i)}\bigcap w^{-1}(\underline{m}))^{\sharp}+(w^{-1}(\underline{m})\setminus\underline{w^{-1}(i+1)})^{\sharp}=
$$
$$
m-(w^{-1}(\underline{m})\bigcap(\underline{w^{-1}(i+1)}\setminus\underline{w^{-1}(i)}))^{\sharp} .
$$
\begin{theorem}\label{advancedKempf}
Assume that for each $\beta_i$ the weight $\lambda$ satisfies $(\lambda, \beta_i^{\vee})\geq k_i$. Then
$H^k(G/B, K^{\epsilon}_{\lambda})=0$ for all $k\geq 1$. In particular, $ch(H^0(G/B, K_{\lambda}^{\epsilon}))=ch(\chi(B, \lambda^{\epsilon}))$. 
\end{theorem}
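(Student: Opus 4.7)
The plan is the two-step strategy described in the introduction: first establish vanishing in positive characteristic via Frobenius kernels, then deduce the characteristic zero case using Corollary \ref{positivetozero}.

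Suppose first that $\mathrm{char}\, K=p>0$, and fix $r\geq 1$. By Corollary \ref{reductiontoeven},
$$H^k(G/B, K^\epsilon_\lambda)|_{G_{ev}} \simeq H^k\bigl(G_{ev}/B_{ev}G_{ev,r},\ (ind^{BG_r}_B K^\epsilon_\lambda)|_{B_{ev}G_{ev,r}}\bigr).$$
Proposition \ref{filtration} endows the coefficient module with a filtration whose successive quotients are $ind^{B_{ev}G_{ev,r}}_{B_{ev}} K_{\lambda-\pi}$, where $\pi$ ranges over sums of distinct roots of $\Phi_1^+$. Induction on the length of this filtration, combined with the long exact cohomology sequence, reduces the problem to showing $H^k(G_{ev}/B_{ev}G_{ev,r}, ind^{B_{ev}G_{ev,r}}_{B_{ev}} K_{\lambda-\pi})=0$ for each $k\geq 1$ and each such $\pi$. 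Since $B_{ev}G_{ev,r}/B_{ev}\simeq G_{ev,r}/B_{ev,r}$ is an affine infinitesimal superscheme, the functor $ind^{B_{ev}G_{ev,r}}_{B_{ev}}$ is exact, so the Grothendieck spectral sequence for composed induction degenerates and yields an isomorphism with $H^k(G_{ev}/B_{ev}, K_{\lambda-\pi})$. This is cohomology on the flag variety of the ordinary reductive group $G_{ev}=GL(m)\times GL(n)$, and Kempf's theorem annihilates it for $k\geq 1$ as soon as $\lambda-\pi$ is dominant with respect to $B_{ev}$, i.e., $(\lambda-\pi,\beta_i^\vee)\geq 0$ for every simple even root $\beta_i$.

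The remaining input is numerical. Writing $\pi=\sum_{\gamma\in S}\gamma$ for some $S\subseteq\Phi_1^+$, one needs $(\lambda,\beta_i^\vee)\geq\sum_{\gamma\in S}(\gamma,\beta_i^\vee)$. Since every $\gamma\in\Phi_1^+$ joins an index in $\underline{m}$ to one in $\underline{m+n}\setminus\underline{m}$, whereas both indices of $\beta_i$ lie in a single block, the formula $(\epsilon_a,\epsilon'_b)=\delta_{ab}$ forces $(\gamma,\beta_i^\vee)\in\{-1,0,1\}$. Hence the maximum of $(\pi,\beta_i^\vee)$ over admissible $\pi$ is attained by selecting precisely those $\gamma$ with $(\gamma,\beta_i^\vee)=1$, which contributes $k_i$, and the hypothesis $(\lambda,\beta_i^\vee)\geq k_i$ secures the required dominance for every $\pi$ appearing in the filtration. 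When $\mathrm{char}\, K=0$, the hypothesis on $\lambda$ is a ground-field-free integer condition; fixing any prime $p$ and a field $F$ of characteristic $p$, the preceding argument yields $H^k(G/B,F^\epsilon_\lambda)=0$ for $k\geq 1$, and Corollary \ref{positivetozero} transfers this conclusion to $K$. The character identity is then immediate, since vanishing of all higher cohomologies collapses $\chi(B,\lambda^\epsilon)$ to $[H^0(G/B,K^\epsilon_\lambda)]$ in $\mathcal{K}(G)$. The main subtlety I anticipate is the careful handling of the $B_{ev}G_{ev,r}$-equivariant filtration of Proposition \ref{filtration} through the long exact sequence and composed-induction spectral sequence, together with the combinatorial observation that the odd-even pairings lie in $\{-1,0,1\}$, which is exactly what makes $k_i$ the sharp threshold.
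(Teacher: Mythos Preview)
Your proposal is correct and follows essentially the same route as the paper: reduce to positive characteristic via Corollary \ref{positivetozero}, apply Corollary \ref{reductiontoeven} and Proposition \ref{filtration} to pass to the even flag variety, then invoke the classical Kempf vanishing theorem after checking that $(\lambda-\pi,\beta_i^\vee)\geq 0$. Your write-up is in fact more explicit than the paper's in two places---the spectral-sequence justification of $H^k(G_{ev}/B_{ev}G_{ev,r},\, ind^{B_{ev}G_{ev,r}}_{B_{ev}}K_{\lambda-\pi})\simeq H^k(G_{ev}/B_{ev},K_{\lambda-\pi})$ and the combinatorial verification that $(\gamma,\beta_i^\vee)\in\{-1,0,1\}$---but the underlying strategy is identical (the paper simply takes $r=1$).
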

\begin{proof}
By Corollary \ref{positivetozero} it remains to prove our theorem for any field of positive characteristic.
Combining Proposition \ref{filtration} with the standard long exact sequence arguments it remains to show that all
cohomology groups $$H^k(G_{ev}/B_{ev}G_{ev, 1}, ind^{B_{ev}G_{ev, 1}}_{B_{ev}} K_{\lambda-\pi})\simeq H^k(G_{ev}/B_{ev},
K_{\lambda-\pi})$$ are equal to zero.
The condition $(\lambda, \beta^{\vee}_i)\geq k_i$ guarantees that $(\lambda-\pi, \beta^{\vee}_i)\geq 0$ for any $\beta_i$.  
Kempf's vanishing theorem infers the first statement. The second statement follows by Proposition \ref{characterformula}. 
\end{proof}

\section{Cohomologies of $GL(2|1)$}

Let $G=GL(2|1)$. By Grothendieck's vanishing theorem $H^k(G/B, M)=0$ for any $B$-supermodule $M$ whenever $k\geq 2$. 
Thus if $H^0(\lambda^{\epsilon})=0$, then $-ch(\chi(B, \lambda^{\epsilon}))=ch(H^1(\lambda^{\epsilon}))$. Symmetrically,
if $H^1(\lambda^{\epsilon})=0$, then $ch(\chi(B, \lambda^{\epsilon}))=ch(H^0(\lambda^{\epsilon}))$.

There are three representatives of conjugacy classes for Borel supersubgroups in $G$. They are
$B^-_w, w\in\{1, (23), (132)\}$. Assume that $char K=p>0$ and $K$ is perfect. Matrices from $B^-_{(23)}$ have the form
$$\left(\begin{array}{ccc}
* & 0 & 0 \\
* & * & * \\
* & 0 & *
\end{array}\right).
$$
Respectively, matrices from $B^-_{(132)}$ have the form
$$\left(\begin{array}{ccc}
* & 0 & * \\
* & * & * \\
0 & 0 & *
\end{array}\right).
$$
It is clear that they are odd adjuacent via $\epsilon_1 -\epsilon_3$. 

Set $B=B^-_{(23)}$. Then $\alpha_1=\epsilon_1-\epsilon_3, \alpha_2=\epsilon_3-\epsilon_2$ and $\rho=0$.
The only even positive root is $\beta=\beta_1=\alpha_1+\alpha_2$. Besides, $k_1=2$.

By the above convention 
$H^k(G/B, K^{\epsilon}_{\lambda})$ is denoted by $H^k(\lambda^{\epsilon})$. We also denote $H^k(G_{ev}/B_{ev}, K_{\lambda})$ by $H^k_{ev}(\lambda)$.

Since the elements of $U_{12}$ commutes with the elements of $U^+$, $U_{12}$ acts identically on $V=K^{\epsilon}_{\lambda}\otimes K[(U^+)_1]|_{B_{ev}G_{ev, 1}}$. For the elements of $U_{21}$ the following formula hold :
$$\left(\begin{array}{ccc}
1 & c_{12} & c_{13} \\
0 & 1 & 0 \\
0 & c_{32} & 1
\end{array}\right) U_{21}(a)=$$$$\left(\begin{array}{ccc}
1+c_{12}a & 0 & 0 \\
a & \frac{1+(c_{12}+c_{13}c_{23})a}{(1+c_{12}a)^2} & -\frac{c_{13}a}{1+c_{12}a} \\
c_{32}a & 0 & \frac{1+(c_{12}+c_{13}c_{23})a}{1+c_{12}a}
\end{array}\right)\left(\begin{array}{ccc}
1 & \frac{c_{12}}{1+c_{12}a} & \frac{c_{13}}{1+c_{12}a} \\
0 & 1 & 0 \\
0 & \frac{c_{32}}{1+c_{12}a} & 1
\end{array}\right).$$
Thus $U_{21}(a)$ acts on $V$ by the rule :
$$c_{12}^k\mapsto (1+c_{12}a)^{\lambda_1-2\lambda_2-\lambda_3-k}(1+(c_{12}+c_{13}c_{32})a)^{\lambda_2+\lambda_3}c_{12}^k;
$$ 
$$c_{12}^k c_{13}\mapsto (1+c_{12}a)^{\lambda_1-\lambda_2-k-1}c_{12}^k c_{13}; \ c_{12}^k c_{32}\mapsto (1+c_{12}a)^{\lambda_1-\lambda_2-k-1}c_{12}^k c_{32};
$$
$$
c_{12}^k c_{13}c_{32}\mapsto (1+c_{12}a)^{\lambda_1-\lambda_2-k-2}c_{12}^k c_{13}c_{32}.
$$
The above computations immediately show that $K_{\lambda}\otimes K[c_{12}]c_{13}$, $K_{\lambda}\otimes K[c_{12}]c_{32}$ and $K_{\lambda}\otimes K[c_{12}]c_{13}c_{32}$ are $B_{ev}G_{ev, 1}$-submodules of $V$. The first two submodules are isomorphic to
$ind^{B_{ev}G_{ev, 1}}_{B_{ev}} K_{\lambda-\alpha_1}$ and $ind^{B_{ev}G_{ev, 1}}_{B_{ev}} K_{\lambda-\alpha_2}$ respectively.
The third submodule is isomorphic to $ind^{B_{ev}G_{ev, 1}}_{B_{ev}} K_{\lambda-\beta}$. Moreover, let us denote the direct sum of these submodules by $W$. Then $V/W\simeq ind^{B_{ev}G_{ev, 1}}_{B_{ev}} K_{\lambda}$.
\begin{pr}\label{thecase23}
The following statments hold :
\begin{enumerate}
\item If $(\lambda, \beta^{\vee})\geq 1$, then $H^1(\lambda^{\epsilon})=0$ but $H^0(\lambda^{\epsilon})\neq 0$;
\item If $(\lambda, \beta^{\vee})=0$, then $H^1(\lambda^{\epsilon})$ and $H^0(\lambda^{\epsilon})$ are one-dimensional $G$-supermodules (of the same weight $\lambda$) if and only if $\lambda$ has a form $(a, a, -a+pt)$ if and only if $\lambda$ is atypical, otherwise $H^1(\lambda^{\epsilon})=H^0(\lambda^{\epsilon})=0$;
\item If $(\lambda, \beta^{\vee})<0$, then $H^1(\lambda^{\epsilon})\neq 0$ and $H^0(\lambda^{\epsilon})=0$.  
\end{enumerate}
\end{pr}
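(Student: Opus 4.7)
The plan is to invoke Corollary \ref{reductiontoeven} with $r=1$ to translate each $H^k(\lambda^{\epsilon})|_{G_{ev}}$ into $H^k(G_{ev}/B_{ev}G_{ev,1},V)$, where $V=(\mathrm{ind}^{BG_1}_{B}K^{\epsilon}_{\lambda})|_{B_{ev}G_{ev,1}}$, and then to extract each claim from the long exact sequence of the filtration $0\to W\to V\to V/W\to 0$ described just above the proposition. Since $B_{ev}G_{ev,1}/B_{ev}\simeq U^+_{ev,1}$ is affine, the induction $\mathrm{ind}^{B_{ev}G_{ev,1}}_{B_{ev}}$ is exact and
\[
H^k(G_{ev}/B_{ev}G_{ev,1},\mathrm{ind}^{B_{ev}G_{ev,1}}_{B_{ev}}K_\mu)\simeq H^k_{ev}(\mu),\qquad \mu\in X(T).
\]
Combined with Grothendieck vanishing in degrees $\geq 2$, only the four weights $\mu\in\{\lambda,\lambda-\alpha_1,\lambda-\alpha_2,\lambda-\beta\}$ contribute, and their pairings with $\beta^{\vee}$ are $(\lambda,\beta^{\vee})$, $(\lambda,\beta^{\vee})-1$, $(\lambda,\beta^{\vee})-1$, $(\lambda,\beta^{\vee})-2$ respectively.

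For $GL(2)\times GL(1)$ with the single even positive root $\beta$, Kempf vanishing together with the reflection identity $H^1_{ev}(\mu)\simeq H^0_{ev}(s_\beta.\mu)$ yields $H^0_{ev}\neq 0$ and $H^1_{ev}=0$ when $(\mu,\beta^{\vee})\geq 0$, both cohomologies vanish when $(\mu,\beta^{\vee})=-1$, and $H^0_{ev}=0$ with $H^1_{ev}\neq 0$ when $(\mu,\beta^{\vee})\leq -2$. In Case (1) all four $H^1_{ev}$ vanish and $H^0_{ev}(\lambda)\neq 0$ survives as a quotient of $H^0(V)$, so $H^1(V)=0$ and $H^0(V)\neq 0$. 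In Case (3) all four $H^0_{ev}$ vanish, forcing $H^0(V)=0$, and $H^1_{ev}(\lambda-\beta)\neq 0$ feeds into $H^1(V)\neq 0$.

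Case (2) requires extra input. Here $(\lambda-\alpha_i,\beta^{\vee})=-1$ kills the two odd-root summands of $W$, whereas $(\lambda-\beta,\beta^{\vee})=-2$ yields $H^1_{ev}(\lambda-\beta)\simeq H^0_{ev}(s_\beta.(\lambda-\beta))\simeq K_\lambda$, so the long exact sequence shrinks to
\[
0\to H^0(V)\to K_\lambda\xrightarrow{\ \phi\ }K_\lambda\to H^1(V)\to 0,
\]
with $\phi$ a $G_{ev}$-morphism between one-dimensional modules of weight $\lambda$, hence either zero or an isomorphism. I would decide between the two via the typical/atypical dichotomy. If $\lambda$ is typical, then $\rho=0$ and $(\lambda,\beta^{\vee})=0$ place $\lambda$ in $\overline{C}_{\mathbb{Z}}\setminus C_{\mathbb{Z}}$, so Theorem \ref{BorelBottWeylpPenkov}(1) (applied with $u=id$) forces $H^k(\lambda^{\epsilon})=0$; thus $\phi$ is an isomorphism and both cohomologies vanish. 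If $\lambda$ is atypical, then $p\mid \lambda_1+\lambda_3$, i.e.\ $\lambda=(a,a,-a+pt)$ for some $t\in\mathbb{Z}$, so by Lemma \ref{characters} $\lambda$ coincides with the character $\mathrm{Ber}(C)^a\det(C_{11})^{pt}$ of $G$; Frobenius reciprocity provides a nonzero $G$-embedding $K^{\epsilon}_{\lambda}\hookrightarrow\mathrm{ind}^G_B K^{\epsilon}_{\lambda}=H^0(\lambda^{\epsilon})$, which forces $H^0(V)\neq 0$, $\phi=0$, and makes $H^0$, $H^1$ both one-dimensional copies of $K^{\epsilon}_{\lambda}$.

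The main obstacle is the analysis of $\phi$ in Case (2); a direct computation from the explicit $U_{21}(a)$-formulas printed above the proposition would be tedious. The typical/atypical dichotomy sidesteps it cleanly: typical weights are handled by the already-proved super Borel-Bott-Weil theorem, while atypical weights with $\lambda_1=\lambda_2$ are precisely the global characters of $G$ by Lemma \ref{characters}, which immediately produces the required $G$-embedding into $H^0$.
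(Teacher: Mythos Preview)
Your proof is correct and follows essentially the same route as the paper: both arguments reduce to the long exact sequence coming from the filtration $0\to W\to V\to V/W\to 0$ of $V=(ind^{BG_1}_B K^{\epsilon}_{\lambda})|_{B_{ev}G_{ev,1}}$, and then compute the relevant $H^k_{ev}(\mu)$ via Kempf vanishing and Serre duality. The only notable deviation is in Case~(2): the paper argues purely from Lemma~\ref{characters}, observing that a one-dimensional $G$-supermodule of weight $\lambda$ forces $\lambda=(a,a,-a+pt)$ and (implicitly) conversely; you instead dispatch the typical case by invoking Theorem~\ref{BorelBottWeylpPenkov}(1), and then handle the atypical case explicitly via Frobenius reciprocity. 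Your version is cleaner and more self-contained for the ``if'' direction, at the cost of quoting a heavier earlier result; the paper's version is lighter but leaves the converse implication somewhat implicit.
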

\begin{proof}
We have the long exact sequence :
$$0\to H^0_{ev}(\lambda-\alpha_1)\oplus H^0_{ev}(\lambda-\alpha_2)\oplus H^0_{ev}(\lambda-\beta)
\to H^0(\lambda^{\epsilon})|_{G_{ev}}\to H^0_{ev}(\lambda)\to$$ 
$$\to H^1_{ev}(\lambda-\alpha_1)\oplus H^1_{ev}(\lambda-\alpha_2)\oplus H^1_{ev}(\lambda-\beta)
\to H^1(\lambda^{\epsilon})|_{G_{ev}}\to H^1_{ev}(\lambda)\to 0.$$ 
Assume that $(\lambda, \beta^{\vee})\geq 2$. By Theorem \ref{advancedKempf}
$H^1({\lambda}^{\epsilon})=0$ but $H^0_{ev}(\lambda)\neq 0$, hence $H^0(\lambda^{\epsilon})\neq 0$. 

Let $(\lambda, \beta^{\vee})=1$. Then 
$$H^1_{ev}(\lambda)=H^1_{ev}(\lambda-\alpha_1)=H^1_{ev}(\lambda-\alpha_2)=
H^0_{ev}(\lambda-\beta)=0
$$
and we have the long exact sequence :
$$0\to H^0_{ev}(\lambda-\alpha_1)\oplus H^0_{ev}(\lambda-\alpha_2)
\to H^0(\lambda^{\epsilon})|_{G_{ev}}\to
H^0_{ev}(\lambda)\to
H^1_{ev}(\lambda-\beta)\to$$
$$\to H^1(\lambda^{\epsilon})|_{G_{ev}}\to 0 .$$ 
By Serre duality $H^1_{ev}(\lambda-\beta)\simeq H^0_{ev}(-\lambda)^* =0$ (cf. \cite{jan}, p.203).
It infers again $H^1(\lambda^{\epsilon})=0$ but $H^0(\lambda^{\epsilon})\neq 0$. 

Now, assume that $(\lambda, \beta^{\vee})=0$. Then
$$H^0_{ev}(\lambda-\alpha_1)=H^0_{ev}(\lambda-\alpha_2)=H^0_{ev}(\lambda-\beta)=0$$
and the long exact sequence is converted into :
$$0\to H^0(\lambda^{\epsilon})|_{G_{ev}}\to H^0_{ev}(\lambda)\to 
H^1_{ev}(\lambda-\alpha_1)\oplus H^1_{ev}(\lambda-\alpha_2)\oplus H^1_{ev}(\lambda-\beta)\to
H^1(\lambda^{\epsilon})|_{G_{ev}}\to 0 .$$
Again, by Serre duality $H^1_{ev}(\lambda-\alpha_1)\simeq H^0_{ev}(-(\lambda+\alpha_2))^*=0,$
$H^1_{ev}(\lambda-\alpha_2)\simeq H^0_{ev}(-(\lambda+\alpha_1))^*=0$ and $H^1_{ev}(\lambda-\beta)\simeq H^0_{ev}(-\lambda)^*$.
So, the long exact sequence has the form :
$$0\to H^0(\lambda^{\epsilon})|_{G_{ev}}\to H^0_{ev}(\lambda)\to H^0_{ev}(-\lambda)^*\to H^1(\lambda^{\epsilon})|_{G_{ev}}\to 0 .$$
By Weyl's character formula $ch(H^0_{ev}(\lambda))=e^{\lambda}=ch(H^0_{ev}(-\lambda)^*)$ (cf. \cite{jan}, II.5.10).
Thus either $H^0(\lambda^{\epsilon})$ and $H^1(\lambda^{\epsilon})$ are one dimensional $G$-supermodules or $H^0(\lambda^{\epsilon})=H^1(\lambda^{\epsilon})=0$.
By Lemma \ref{characters} the first opportunity holds if and only if $\lambda=(a, a, -a+pt)$ for
some integers $a, t$ if and only if $\lambda$ is atypical.  

Finally, if $(\lambda, \beta^{\vee})<0$, then $H^0(\lambda^{\epsilon})=0$ but $H^1(\lambda^{\epsilon})\neq 0$.
In fact, at least $H^1_{ev}(\lambda-\beta)\simeq H^0_{ev}(-\lambda)^*$ is not equal to zero. 
\end{proof}
Now, set $B=B^-_{(132)}$. In the above notations $V=K^{\epsilon}_{\lambda}\otimes K[(U^+)_1]|_{B_{ev}G_{ev, 1}}$ has a filtration
by $B_{ev}G_{ev, 1}$-modules as :
$$W=ind^{B_{ev}G_{ev, 1}}_{B_{ev}} K_{\lambda}\oplus ind^{B_{ev}G_{ev, 1}}_{B_{ev}} K_{\lambda-\alpha_2}\oplus ind^{B_{ev}G_{ev, 1}}_{B_{ev}} K_{\lambda+\alpha_1-\alpha_2}  \subseteq V$$
and $V/W\simeq ind^{B_{ev}G_{ev, 1}}_{B_{ev}} K_{\lambda+\alpha_1}$. 
For example, the isomorphism $K_{\lambda}\otimes K[c_{12}]c_{32}\simeq ind^{B_{ev}G_{ev, 1}}_{B_{ev}} K_{\lambda-\alpha_2}$ 
is given by 
$$c_{12}^k c_{32}\mapsto  
\{\begin{array}{c} 
c_{12}^{k-1}, k\geq 1, \\
c_{12}^{p-1}, k=0.
\end{array}$$
Since $c_{12}^p=0$ and $U_{12}(a)$ takes $c_{12}^k c_{32}$ to $(1+c_{12}a)^{\lambda_1-\lambda_2-k}c_{12}^k c_{32}$, the isomorphism is defined correctly.
\begin{pr}\label{thecase132}
The following statements hold :
\begin{enumerate}
\item If $(\lambda, \beta^{\vee})\geq 0$, then $H^1(\lambda^{\epsilon})=0$ but $H^0(\lambda^{\epsilon})\neq 0$;
\item If $(\lambda, \beta^{\vee})=-1$, then $H^1(\lambda^{\epsilon})=H^0(\lambda^{\epsilon})=0$;
\item If $(\lambda, \beta^{\vee})\leq -2$, then $H^1(\lambda^{\epsilon})\neq 0$ and $H^0(\lambda^{\epsilon})=0$.
\end{enumerate}
\end{pr}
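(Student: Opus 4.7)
The plan is to follow the template of Proposition \ref{thecase23}: apply Corollary \ref{reductiontoeven} with $r = 1$ to the explicit filtration $0 \to W \to V \to V/W \to 0$ of $V = K^\epsilon_\lambda \otimes K[(U^+)_1]|_{B_{ev}G_{ev,1}}$ displayed just above. Using the identification $H^k(G_{ev}/B_{ev}G_{ev,1}, ind^{B_{ev}G_{ev,1}}_{B_{ev}} K_\mu) \simeq H^k_{ev}(\mu)$ (which follows, as in the previous proposition, from the affineness of $B_{ev}G_{ev,1}/B_{ev}$), I would extract the long exact sequence
\begin{align*}
0 \to\, &H^0_{ev}(\lambda) \oplus H^0_{ev}(\lambda - \alpha_2) \oplus H^0_{ev}(\lambda + \alpha_1 - \alpha_2) \to H^0(\lambda^\epsilon)|_{G_{ev}} \to H^0_{ev}(\lambda + \alpha_1) \\
&\to H^1_{ev}(\lambda) \oplus H^1_{ev}(\lambda - \alpha_2) \oplus H^1_{ev}(\lambda + \alpha_1 - \alpha_2) \to H^1(\lambda^\epsilon)|_{G_{ev}} \to H^1_{ev}(\lambda + \alpha_1) \to 0.
\end{align*}

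Retaining the conventions $\alpha_1 = \epsilon_1 - \epsilon_3$, $\alpha_2 = \epsilon_3 - \epsilon_2$, $\beta = \epsilon_1 - \epsilon_2$ from Proposition \ref{thecase23}, a short calculation gives $(\alpha_1, \beta^\vee) = (\alpha_2, \beta^\vee) = 1$, so for $n := (\lambda, \beta^\vee)$ the four pairings $(\mu, \beta^\vee)$ appearing above are $n$, $n-1$, $n$, $n+1$ respectively. I would then invoke Kempf vanishing together with Serre duality $H^1_{ev}(\mu) \simeq H^0_{ev}(-\mu - \beta)^*$, yielding $H^0_{ev}(\mu) \neq 0 \Leftrightarrow (\mu,\beta^\vee) \geq 0$ and $H^1_{ev}(\mu) \neq 0 \Leftrightarrow (\mu,\beta^\vee) \leq -2$. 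Cases (1) ($n \geq 0$) and (3) ($n \leq -2$) should then fall out directly from the long exact sequence: in case (1) all the $H^1_{ev}$ terms vanish while $H^0_{ev}(\lambda + \alpha_1) \neq 0$, and in case (3) all the $H^0_{ev}$ terms vanish while $H^1_{ev}(\lambda - \alpha_2) \neq 0$.

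The main obstacle will be Case (2), $n = -1$, where the sequence collapses to $0 \to H^0(\lambda^\epsilon) \to H^0_{ev}(\lambda + \alpha_1) \xrightarrow{\delta} H^1_{ev}(\lambda - \alpha_2) \to H^1(\lambda^\epsilon) \to 0$ with both middle terms one-dimensional of $T$-character $e^{\lambda + \alpha_1}$. In contrast to Proposition \ref{thecase23}(2), where the surviving extremal weight equals $\lambda$ (permitting atypical $\lambda$ to produce one-dimensional cohomology), here the surviving weight is the shifted $\lambda + \alpha_1 \neq \lambda$, and I would exploit this asymmetry as follows. If $H^0(\lambda^\epsilon)$ were non-zero, its injection into the one-dimensional $G_{ev}$-module $H^0_{ev}(\lambda + \alpha_1)$ would force $H^0(\lambda^\epsilon)$ to be one-dimensional as a $G$-supermodule of weight $\lambda + \alpha_1$. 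But then Frobenius reciprocity
$$\mathsf{Hom}_G\bigl(H^0(\lambda^\epsilon), H^0(G/B, K^\epsilon_\lambda)\bigr) \simeq \mathsf{Hom}_B\bigl(H^0(\lambda^\epsilon)|_B, K^\epsilon_\lambda\bigr)$$
would make the right-hand side vanish since the $B$-weights $\lambda + \alpha_1$ and $\lambda$ differ, contradicting the identity map on $H^0(\lambda^\epsilon)$. Hence $H^0(\lambda^\epsilon) = 0$, forcing $\delta$ to be an injection between one-dimensional spaces and thus an isomorphism, so that $H^1(\lambda^\epsilon) = 0$ as well.
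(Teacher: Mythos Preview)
Your proof is correct and follows essentially the same route as the paper: the same long exact sequence is set up from the displayed filtration of $V$, and cases (1) and (3) are dispatched exactly as the paper does (vanishing of all $H^1_{ev}$ terms when $n\geq 0$, vanishing of all $H^0_{ev}$ terms and nonvanishing of $H^1_{ev}(\lambda-\alpha_2)$ when $n\leq -2$).

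The only minor divergence is in the contradiction for case (2). The paper observes that if $H^0(\lambda^{\epsilon})$ were nonzero it would be a one-dimensional simple $G$-supermodule, and then invokes Lemma~\ref{characters} (classification of $X(G)$) to conclude $(\lambda,\beta^{\vee})=0$, contradicting $(\lambda,\beta^{\vee})=-1$. You instead use Frobenius reciprocity directly: a one-dimensional $H^0(\lambda^{\epsilon})$ would carry the $T$-weight $\lambda+\alpha_1$ coming from $H^0_{ev}(\lambda+\alpha_1)$, while the adjunction $\mathsf{Hom}_G(H^0(\lambda^{\epsilon}),ind^G_B K^{\epsilon}_{\lambda})\simeq\mathsf{Hom}_B(H^0(\lambda^{\epsilon})|_B,K^{\epsilon}_{\lambda})$ forces weight $\lambda$, an immediate contradiction. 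Your version is a bit more self-contained since it does not need the structure of $X(G)$; the paper's version makes the parallel with Proposition~\ref{thecase23}(2) (where Lemma~\ref{characters} is genuinely needed to characterize the atypical case) more transparent.
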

\begin{proof}
The long exact sequence is :
$$0\to H^0_{ev}(\lambda)\oplus H^0_{ev}(\lambda-\alpha_2)\oplus H^0_{ev}(\lambda+\alpha_1-\alpha_2)
\to H^0(\lambda^{\epsilon})|_{G_{ev}}\to H^0_{ev}(\lambda+\alpha_1)\to$$ 
$$\to H^1_{ev}(\lambda)\oplus H^1_{ev}(\lambda-\alpha_2)\oplus H^1_{ev}(\lambda+\alpha_1 -\alpha_2)
\to H^1(\lambda^{\epsilon})|_{G_{ev}}\to H^1_{ev}(\lambda+\alpha_1)\to 0.$$ 
Arguing as above we see that $H^1(\lambda^{\epsilon})=0$ but $H^0(\lambda^{\epsilon})\neq 0$, provided $(\lambda, \beta^{\vee})\geq 0$. If $(\lambda, \beta^{\vee})\leq -2$, then $H^0(\lambda^{\epsilon})=0$ and since
$H^1_{ev}(\lambda-\alpha_2)\simeq H^0(-(\lambda+\alpha_1))^*\neq 0$, $H^1(\lambda^{\epsilon})\neq 0$. 

Finally, let $(\lambda, \beta^{\vee})=-1$. The long exact sequence is converted into :
$$0\to H^0(\lambda^{\epsilon})|_{G_{ev}}\to H^0_{ev}(\lambda+\alpha_1)\to
H^0_{ev}(-(\lambda+\alpha_1))^*\to H^1(\lambda^{\epsilon})|_{G_{ev}}\to 0 .
$$
Thus either 
$H^0(\lambda^{\epsilon})|_{G_{ev}}\simeq H^0_{ev}(\lambda+\alpha_1)$
and $H^1(\lambda^{\epsilon})|_{G_{ev}}\simeq H^0_{ev}(-(\lambda+\alpha_1))^*$ or
$H^0(\lambda^{\epsilon})=H^1(\lambda^{\epsilon})=0$. The first opportunity would imply
that $H^0(\lambda^{\epsilon})$ and $H^1(\lambda^{\epsilon})$ are simple one dimensional $G$-supermodules and hence $(\lambda, \beta^{\vee})=0$. This contradiction infers that $H^0(\lambda^{\epsilon})$ and $H^1(\lambda^{\epsilon})$ are always
trivial whenever $(\lambda, \beta^{\vee})=-1$. 
\end{proof}
\begin{rem}\label{nopassageforatypical}
Comparing the second statements of the above propositions we see that Proposition \ref{passageviaodd} is no longer true
for atypical weights!
\end{rem}
\begin{rem}\label{charzero}
Since $ch(\chi(B, \lambda^{\epsilon}))$ does not depend on $char K$, Corollary \ref{positivetozero} immediately implies that all statements of the above propositions hold in the case $char K=0$. Only the case $B=B^-_{(23)}$ and $\lambda=(a, a, -a)$ needs some extra arguing. But in this case $H^0(\lambda^{\epsilon})$ always contains a simple supermodule that is isomorphic to $KBer(C)^a\subseteq K[G]_r$ (see Lemma \ref{characters}). It remains to refer to Remark \ref{compareofdim}. 
\end{rem}
\begin{rem}
The results of this section show that the statement of Theorem \ref{advancedKempf} still holds even if for some $i$ the integer $(\lambda, \beta_i^{\vee})$ is less than $k_i$. It rises the natural question to find the minimal value of each $(\lambda, \beta_i^{\vee})$ that would infer the statement of Theorem \ref{advancedKempf}. We conjecture that Theorem \ref{advancedKempf} still holds whenever $(\lambda, \beta_i^{\vee})\geq \min\{m, n\}$ for any $i\neq m$.
\end{rem} 
\begin{center}
Acknowledgement.
\end{center}
This work was suppored by Ministry of Science and Education of Russian Federation, grant 14.V37.21.0359 /0859. We thank for this support.

\end{document}